\tikzstyle{morphism}=[fill=white, draw=black, shape=rectangle]
\tikzstyle{medium box}=[fill=white, draw=black, shape=rectangle, minimum width=0.7cm, minimum height=0.7cm]
\tikzstyle{large morphism}=[fill=white, draw=black, shape=rectangle, minimum width=1.7cm, minimum height=1cm]
\tikzstyle{bn}=[fill=black, draw=black, shape=circle, inner sep=1.5pt]
\tikzstyle{state}=[fill=white, draw=black, regular polygon, regular polygon sides=3, minimum width=0.8cm, shape border rotate=180, inner sep=0pt]
\tikzstyle{medium state}=[fill=white, draw=black, regular polygon, regular polygon sides=3, minimum width=1.3cm, inner sep=0pt, shape border rotate=180]
\tikzstyle{large state}=[fill=white, draw=black, regular polygon, regular polygon sides=3, minimum width=2.2cm, shape border rotate=180, inner sep=0pt]
\tikzstyle{wide state}=[fill=white, draw=black, shape=isosceles triangle, minimum width=0.8cm, shape border rotate=270, inner sep=1.4pt, minimum height=0.5cm, isosceles triangle apex angle=80]
\tikzstyle{wn}=[fill=white, draw=black, shape=circle, inner sep=1.5pt]
\tikzstyle{blue morphism}=[fill=white, draw={rgb,255: red,15; green,0; blue,150}, shape=rectangle, text={rgb,255: red,15; green,0; blue,150}, tikzit category=blue]
\tikzstyle{blue state}=[fill=white, draw={rgb,255: red,15; green,0; blue,150}, shape=circle, regular polygon, regular polygon sides=3, minimum width=0.8cm, shape border rotate=180, inner sep=0pt, text={rgb,255: red,15; green,0; blue,150}, tikzit category=blue]
\tikzstyle{blue node}=[fill={rgb,255: red,15; green,0; blue,150}, draw={rgb,255: red,15; green,0; blue,150}, shape=circle, tikzit category=blue, inner sep=1.5pt]
\tikzstyle{blue}=[text={rgb,255: red,15; green,0; blue,150}, tikzit draw={rgb,255: red,191; green,191; blue,191}, tikzit category=blue, tikzit fill=white, inner sep=0mm]
\tikzstyle{blue wide state}=[fill=white, draw={rgb,255: red,15; green,0; blue,150}, text={rgb,255: red,15; green,0; blue,150}, shape=isosceles triangle, minimum width=0.8cm, shape border rotate=270, inner sep=1.4pt, minimum height=0.5cm, isosceles triangle apex angle=80]
\tikzstyle{red node}=[fill={rgb,255: red,150; green,0; blue,2}, draw={rgb,255: red,150; green,0; blue,2}, shape=circle, inner sep=1.5pt]
\tikzstyle{Purple node}=[fill={rgb,255: red,120; green,0; blue,120}, draw={rgb,255: red,120; green,0; blue,120}, text={rgb,255: red,120; green,0; blue,120}, shape=circle, inner sep=1.5pt]
\tikzstyle{red}=[text={rgb,255: red,150; green,0; blue,2}, inner sep=0mm, tikzit fill=white, tikzit draw={rgb,255: red,191; green,191; blue,191}]
\tikzstyle{purple}=[text={rgb,255: red,150; green,0; blue,150}, inner sep=0mm, tikzit fill=white, tikzit draw={rgb,255: red,191; green,191; blue,191}]
\tikzstyle{white morphism}=[fill=white, draw=white, shape=rectangle, tikzit draw={rgb,255: red,139; green,139; blue,139}]
\tikzstyle{leak morphism}=[fill=white, draw={rgb,255: red,120; green,0; blue,85}, shape=rectangle, text={rgb,255: red,120; green,0; blue,85}, tikzit category=leak]
\tikzstyle{leak}=[text={rgb,255: red,120; green,0; blue,85}, inner sep=0mm, tikzit fill=white, tikzit draw={rgb,255: red,191; green,191; blue,191}, tikzit category=leak]
\tikzstyle{leak node}=[fill={rgb,255: red,120; green,0; blue,85}, draw={rgb,255: red,120; green,0; blue,85}, shape=circle, inner sep=1.5pt, tikzit category=leak]
\tikzstyle{curly brace}=[decorate, decoration={brace,amplitude=5pt}]
\tikzstyle{arrow}=[->]
\tikzstyle{dashed box}=[-, dashed]
\tikzstyle{blue arrow}=[-, draw={rgb,255: red,15; green,0; blue,150}, tikzit category=blue]
\tikzstyle{red arrow}=[-, draw={rgb,255: red,150; green,0; blue,2}, tikzit category=red]
\tikzstyle{purple line}=[draw={rgb,255: red,120; green,0; blue,120}, >=stealth, shorten <=2pt, shorten >=2pt, -]
\tikzstyle{protected purple line}=[draw={rgb,255: red,120; green,0; blue,120}, >=stealth, shorten <=2pt, shorten >=2pt, preaction={line width=1.8pt, white, draw}, -]
\tikzstyle{mapsto}=[{|->}]
\tikzstyle{double wire}=[-, double]
\tikzstyle{protected}=[-, preaction={line width=1.8pt,white,draw}]
\tikzstyle{leak arrow}=[-, line join=round, decorate, decoration={snake, segment length=4, amplitude=0.75, pre=curveto, post=curveto, pre length=1pt, post length=1pt}]
\tikzstyle{protected leak arrow}=[-, line join=round, decorate, decoration={snake, segment length=4, amplitude=0.75, pre=curveto, post=curveto, pre length=1pt, post length=1pt}, preaction={line width=1.8pt, white, draw}]
\tikzstyle{hollow arrow}=[-, very thin, white, preaction={line width=0.7pt,draw={rgb,255: red,120; green,0; blue,85}}, tikzit category=leak, tikzit draw={rgb,255: red,150; green,0; blue,120}]
\tikzstyle{protected hollow arrow}=[-, very thin, white, preaction={line width=0.7pt,draw={rgb,255: red,120; green,0; blue,85},preaction={line width=2.1pt,white,draw}}, tikzit category=leak, tikzit draw={rgb,255: red,150; green,0; blue,120}]
	\setlist[enumerate]{label=(\roman*)}  
	\setlist[enumerate,2]{label=(\alph*)}  
\definecolor{myurlcolor}{rgb}{0,0,0.3}
\definecolor{mycitecolor}{rgb}{0,0.3,0}
\definecolor{myrefcolor}{rgb}{0.3,0,0}
\newcommand{\sref}[2]{\hyperref[#2]{#1~\ref{#2}}}	
\newtheorem{theorem}{Theorem}[section]
\newtheorem{proposition}[theorem]{Proposition}
\newtheorem{lemma}[theorem]{Lemma}
\newtheorem{corollary}[theorem]{Corollary}
\newtheorem{definition}[theorem]{Definition}
\newtheorem{question}[theorem]{Question}
\theoremstyle{definition}
\newtheorem{example}[theorem]{Example}
\newtheorem{remark}[theorem]{Remark}
\newcommand{\N}{\mathbb{N}}
\newcommand{\Z}{\mathbb{Z}}
\newcommand{\R}{\mathbb{R}}
\newcommand{\ph}{\mathord{\rule[-0.05em]{0.6em}{0.05em}}}		
\newcommand{\newterm}[1]{\textbf{#1}}	
\newcommand{\cat}[1]{{\mathsf{#1}}} 
\newcommand{\op}{\mathrm{op}}
\newcommand{\Setcat}{\mathsf{Set}}
\newcommand{\Kl}{\mathsf{Kl}}		
\newcommand{\id}{\mathrm{id}} 		
\tikzset{
	pullback/.style = { 
		minimum size=1.2ex,
		path picture={\draw[opacity=1,black,-,#1] (-0.5ex,-0.5ex) -- (0.5ex,-0.5ex) -- (0.5ex,0.5ex);%
			},
		}
	}
\DeclareMathOperator{\cop}{copy}
\DeclareMathOperator{\discard}{del}
\newcommand{\cC}{\mathsf{C}}			
\newcommand{\cD}{\mathsf{D}}			
\newcommand{\dilations}{\mathsf{Dilations}}	
\renewcommand{\det}{\mathrm{det}}	
\newcommand{\nc}{\mathrm{nc}}		
\newcommand{\samp}{\mathsf{samp}}	
\newcommand{\finstoch}{\mathsf{FinStoch}}
\newcommand{\borelmeas}{\mathsf{BorelMeas}}
\newcommand{\borelstoch}{\mathsf{BorelStoch}}
\newcommand{\stoch}{\mathsf{Stoch}}
\newcommand{\qbs}{\mathsf{Qbs}}
\newcommand{\qbstoch}{\mathsf{QBStoch}}
\newcommand{\as}[1]{
	\def\relstate{#1}%
	\ifx\relstate\empty
		\text{a.s.}%
	\else
		{#1\text{-a.s.}}%
	\fi
}
\newcommand{\ase}[1]{=_{#1\text{-a.s.}}}			
\newcommand{\dileq}[1]{=_{#1\text{-dil.}}}		
\providecommand{\given}{}			
\newcommand{\SetSymbol}[1][]{%
	\nonscript\;\,#1\vert
	\allowbreak
	\nonscript\;\,
	\mathopen{}
}
\DeclarePairedDelimiterX{\Set}[1]{\{}{\}}{%
	\renewcommand{\given}{\SetSymbol[\delimsize]}
	#1
}
\let\oldSet\Set
\def\Set{\@ifstar{\oldSet}{\oldSet*}}
\DeclarePairedDelimiterX{\Family}[1]{(}{)}{%
	\renewcommand{\given}{\SetSymbol[\delimsize]} 
	#1
}
\let\oldFamily\Family
\def\Family{\@ifstar{\oldFamily}{\oldFamily*}}
\title{Dilations and Information Flow Axioms\\[2pt] in Categorical Probability}
\author{Tobias Fritz, Tom{\'a}{\v{s}} Gonda, Nicholas Gauguin Houghton-Larsen,\\[2pt] Antonio Lorenzin, Paolo Perrone and Dario Stein}
\begin{document}

\maketitle

\begin{abstract}
	We study the positivity and causality axioms for Markov categories as properties of dilations and information flow, and also develop variations thereof for arbitrary semicartesian monoidal categories.
	These help us show that being a positive Markov category is merely an additional \emph{property} of a symmetric monoidal category (rather than extra structure).
	We also characterize the positivity of representable Markov categories and prove that \emph{causality implies positivity}, but not conversely.
	Finally, we note that positivity fails for quasi-Borel spaces and interpret this failure as a privacy property of probabilistic name generation.
\end{abstract}

\tableofcontents

\section{Introduction}

	Markov categories are a categorical approach to the foundations of probability and statistics. 
	Recent developments of this framework have resulted in purely categorical proofs of various classical theorems, including theorems on sufficient statistics~\cite{fritz2019synthetic}, $0/1$-laws~\cite{fritzrischel2019zeroone}, comparison of statistical experiments~\cite{fritz2023representable}, the de Finetti theorem~\cite{fritz2021definetti,moss2022probability}, development of multinomial and hypergeometric distributions~\cite{jacobs2021multinomial}, ergodic systems~\cite{moss2022ergodic}, and the $d$-separation criterion for Bayesian networks~\cite{fritz2022dseparation}.
	The Markov categories framework has also found use in probabilistic programming theory~\cite{stein2021conditioning,stein2021structural} and cognitive science~\cite{smithe2021inference}.

	Many of these developments do not apply to arbitrary Markov categories, they require additional conditions, such as the \emph{existence of conditionals}, the \emph{causality axiom} or the \emph{positivity axiom} (see \Cref{sec:background} for more details). 
	The fact that these axioms hold in measure-theoretic probability is the only measure-theoretic input that is needed for developments like this. 
	The string-diagrammatic nature of these conditions also suggests that one can think of them as conditions on information flow, hence we propose to call them \newterm{information flow axioms}.

	The purpose of the present paper is to conduct a more detailed study of these axioms.
	Previously it was known that the existence of conditionals implies both the causality and the positivity axioms \cite[Proposition 11.34 and Lemma 11.24]{fritz2019synthetic}. 
	The converse is not true.
	For example, the Markov category $\cat{Stoch}$ of all measurable spaces and Markov kernels satisfies causality and positivity but does not have conditionals~\cite[Example 11.3]{fritz2019synthetic}.
	The relation between causality and positivity remained an open question.
	Our main result here is that \newterm{causality implies positivity, but not conversely} (\Cref{prop:causal_pos}).
	For both axioms, we also prove various reformulations which elucidate their meaning further.

	Besides adding further clarity to the intuition behind the axioms, these reformulations can also help in deciding whether a given Markov category satisfies them.
	As a case in point, we consider the Markov category $\qbstoch$, which is the Kleisli category of the probability monad on the category $\qbs$ of \newterm{quasi-Borel spaces}.
	We find that $\qbstoch$ violates positivity, and that it does so in an interesting way.
	As was recently discovered, $\qbs$ validates the privacy equation~\cite{sabok2021semantics}, which originally describes a phenomenon of fresh name generation in theoretical computer science \cite{stark:cmln}. 
	As we show in \Cref{prop:name_generation}, the privacy equation and positivity are incompatible.

	A secondary theme of this paper is the idea of developing categorical probability in terms of semicartesian monoidal categories only, which have a weaker structure than Markov categories.
	This is achieved using the concept of \newterm{dilations}. 
	A dilation of a Markov kernel is another Markov kernel with an additional output such that marginalizing over the latter recovers the original Markov kernel.
	In the case of kernels with trivial input, this amounts to an extension of the original probability space.
	While already many of our investigations of the positivity axiom for Markov categories are phrased in terms of dilations, the concept of dilation comes to shine in the purely semicartesian setting.
	There, we use them to define concepts that mimic those of almost sure equality and of deterministic morphisms in Markov categories.
	We also note that the structure of a positive Markov category can be recovered from its structure as a semicartesian category, and we provide a characterization of positive Markov categories in semicartesian and dilational terms.

\subsection{Summary of the paper}

	\begin{itemize}
		\item \Cref{sec:background} starts with the definitions of semicartesian categories and Markov categories, sketches the most important examples for this paper, and recalls the definition of dilation.
		
		\item \Cref{sec:positivity} presents a detailed study of the positivity axiom. 
			After a reformulation of positivity as \newterm{deterministic marginal independence} (DMI) in \Cref{def:dmp,thm:eqpos}, we derive a characterization of positivity for representable Markov categories in \Cref{rep_pos}.
			We then turn to the causality axiom and state its equivalence with \newterm{parametrized equality strengthening} in \Cref{def:pes,prop:pes_causality}.
			\Cref{prop:causal_pos} shows that causality implies positivity; an intricate counterexample for the converse is given by a Markov category with semiring-valued Markov kernels as morphisms for a carefully crafted semiring in \cref{prop:quantale_ex_positive_causal}.
			
		\item \Cref{sec:qbs} recalls the main features of quasi-Borel spaces before presenting the privacy equation as \Cref{thm:privacy}.
			\Cref{proposition:qbs_no_dmp} then uses our earlier reformulation of positivity to show that the Markov category of quasi-Borel spaces violates positivity. 
			\Cref{prop:name_generation} generalizes this to arbitrary categorical models of name generation by first observing that every such model defines a Markov category.
			
		\item \Cref{sec:semicart} treats aspects of categorical probability, and in particular the positivity theme, in purely semicartesian categories. 
			To this end, \Cref{def:as_eq} introduces \newterm{dilational equality}. 
			It coincides with almost sure equality in Markov categories if and only if the said Markov category satisfies causality (\cref{prop:markov_dilation}).
			In \Cref{def:cat_dilations}, we associate a category of dilations to every morphism.
			Its initial objects are dubbed \newterm{initial dilations} (\Cref{def:initial_dilation}).
			This provides yet further characterizations of positivity for Markov categories as \Cref{prop:positivity_dilations,cor:pos_nc}.
			Finally, \Cref{cor:acp} characterizes positive Markov categories in terms of their structure as semicartesian categories alone, and \Cref{thm:acp} achieves the same for a slightly more general class of Markov categories.
	\end{itemize}
	
	\newlength{\chunit}
	\setlength{\chunit}{\dimexpr\numexpr 1*\textwidth/17 sp\relax}
	\newlength{\arshift}
	\setlength{\arshift}{10pt}
	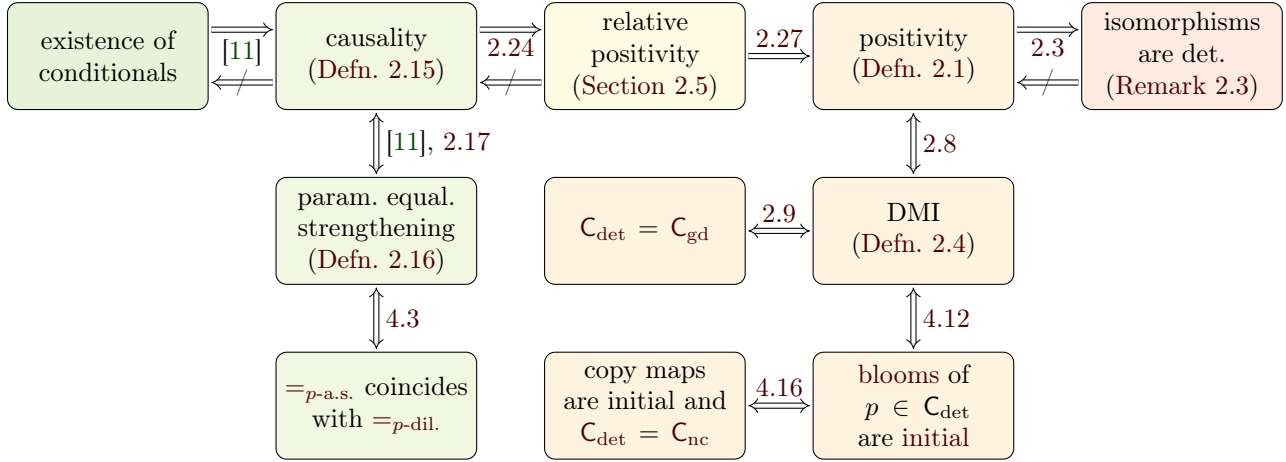
\begin{figure}
		\tikzset{%
			basic/.style = {rectangle, rounded corners, draw=black, minimum width=3\chunit, minimum height=1.6\chunit, text centered, text width=2.8\chunit, inner sep=2pt},
			axcond/.style = {basic,fill=LimeGreen!18!white},
			axcaus/.style = {basic,fill=SpringGreen!18!white},
			axrelpos/.style = {basic,fill=Yellow!13!white},
			axpos/.style = {basic,fill=Dandelion!15!white},
			axisosdet/.style = {basic,fill=RedOrange!11!white},
			implies/.style = {-{Implies},double,double equal sign distance},
			iff/.style = {{Implies}-{Implies},double,double equal sign distance},
		}
		\makebox[\textwidth][c]{
			\begin{tikzpicture}[node distance=\chunit,shorten > = 1pt, shorten < = 1pt]
				\node (cond) 		[axcond] 							{existence of conditionals};
				
				\node (caus)		[axcaus, right=of cond]				{causality\\ (\sref{Defn.}{defn:causal})};
				\node (pes)		[axcaus, below=of caus]		{param.\ equal.\ strengthening\\ (\sref{Defn.}{def:pes})};
				\node (dileq)		[axcaus, below=of pes]			{\hyperref[def:ase]{${\ase{p}}$} coincides with \hyperref[def:as_eq]{${\dileq{p}}$}};
				
				\node (relpos)		[axrelpos, right=of caus]				{relative positivity\\ (\sref{Section}{sec:relativized})};
				
				\node (pos)		[axpos, right=of relpos]				{positivity\\ (\sref{Defn.}{def:positivity})};
				\node (dmi)		[axpos, below=of pos]			{DMI\\ (\sref{Defn.}{def:dmp})};
				\node (gd)		[axpos, left=of dmi]			{$\hyperref[def:det]{\cC_{\rm det}} = \hyperlink{def:glob_det}{\cC_{\rm gd}}$};
				\node (bloom)		[axpos, below=of dmi]			{\hyperlink{def:bloom}{blooms} of $p \in \cC_{\rm det}$\!\!\\ are \hyperref[def:initial_dilation]{initial}};
				\node (copy)		[axpos, left=of bloom]			{copy maps are initial and $\hyperref[def:det]{\cC_{\rm det}} = \hyperref[def:noncreative]{\cC_{\rm nc}}$};
				
				\node (isosdet)		[axisosdet, right=of pos]				{isomorphisms are det.\ (\sref{Remark}{rem:isos_det})};

				\draw[implies] 		($(cond.east) + (0,\arshift)$) -- ($(caus.west) + (0,\arshift)$)	node[midway, below] {\textrm{\cite{fritz2019synthetic}}};
				\draw[implies] 		($(caus.west) + (0,-\arshift)$) -- ($(cond.east) + (0,-\arshift)$) 	node[midway] {/};
				\draw[implies] 		($(caus.east) + (0,\arshift)$) -- ($(relpos.west) + (0,\arshift)$)	node[midway, below] {\ref{prop:causal_pos}};
				\draw[implies] 		($(relpos.west) + (0,-\arshift)$) -- ($(caus.east) + (0,-\arshift)$) 	node[midway] {/};
				\draw[implies] 		(relpos) -- (pos)							node[midway, above] {\ref{rem:relpos_pos}};
				\draw[implies] 		($(pos.east) + (0,\arshift)$) -- ($(isosdet.west) + (0,\arshift)$)	node[midway, below] {\ref{rem:isos_det}};
				\draw[implies] 		($(isosdet.west) + (0,-\arshift)$) -- ($(pos.east) + (0,-\arshift)$) 	node[midway] {/};
				
				\draw[iff]			(caus) -- (pes)											node[midway, right] {\textrm{\cite{fritz2019synthetic}}, \ref{prop:pes_causality}};
				\draw[iff]			(pes) -- (dileq)											node[midway, right] {\ref{prop:markov_dilation}};
				
				\draw[iff]			(pos) -- (dmi)											node[midway, right] {\ref{thm:eqpos}};
				\draw[iff]			(dmi) -- (gd)											node[midway, above] {\ref{rem:glob_det}};
				\draw[iff]			(dmi) -- (bloom)											node[midway, right] {\ref{prop:positivity_dilations}};
				\draw[iff]			(bloom) -- (copy)										node[midway, above] {\ref{cor:pos_nc}};
			\end{tikzpicture}
		}
		\caption{Implications between various information flow axioms considered in this paper, with pointers to theorem numbers on the arrows.}
		\label{fig:implications}
	\end{figure}

	\sref{Figure}{fig:implications} summarizes various information flow axioms considered in this paper together with their relations.

	\subsubsection*{Prerequisites for reading}

	We assume that the reader has some basic familiarity with symmetric monoidal categories and string diagrams~\cite{baez2011rosetta,piedeleu2023strings}.
	Some prior exposure to Markov categories~\cite{fritz2019synthetic,chojacobs2019strings} will be helpful, but is not strictly necessary.
	On the probability theory side, basic knowledge of discrete probability theory suffices, as measure-theoretic probability does not play a central role in this paper.
	Somewhat of an exception is \Cref{sec:qbs};
	although we briefly recall the most relevant theoretical background, prior exposure to the theory of quasi-Borel spaces~\cite{heunen2017convenient} is helpful to understand it.

	\subsubsection*{Acknowledgements}

	We thank Arthur Parzgynat as well as two anonymous referees for a number of helpful comments on an earlier version.
	Tobias Fritz and Antonio Lorenzin acknowledge funding by the Austrian Science Fund (FWF) through project P 35992-N.
	Tom\'a\v{s} Gonda acknowledges the support of the START Prize Y 1261-N of the Austrian Science Fund (FWF).
	Paolo Perrone acknowledges funding from Sam Staton's grant BLaSt---a Better Language for Statistics of the European Research Council (ERC).

\subsection{Semicartesian categories, Markov categories, and dilations}
\label{sec:background}

	All categories that are of interest to us in this paper are symmetric monoidal categories with the following extra property, which for categorical probability implements the normalization of probability, and can be thought of as saying that there is a unique way to forget information.

	\begin{definition}
		\label{def:semicartesian}
		A symmetric monoidal category $\cC$ is \newterm{semicartesian} if the monoidal unit $I$ is terminal.	
	\end{definition}

	We usually abbreviate the lengthy phrase ``semicartesian symmetric monoidal category'' to ``semicartesian category'', leaving in particular the symmetry implicit since other cases will not be considered.
	Although the concept of semicartesian category is standard, we do not know where it was considered first. 

	Besides Markov categories, that we turn to next, interesting examples of semicartesian categories occur in quantum probability, where for example the category with finite-dimensional Hilbert spaces as objects and quantum channels as morphisms is a widely studied example (see \cite{houghtonlarsen2021dilations} and references therein).
	
	\paragraph{Markov categories.} Roughly speaking, a Markov category \cite{fritz2019synthetic} is a semicartesian category where all objects are commutative comonoids.
	Here is the precise definition. 
	
	\begin{definition}\label{defmarkovcat}
		A \newterm{Markov category} is a symmetric monoidal category $\cC$ where:
		\begin{itemize}
			\item Each object $X$ is equipped with ``copy'' and ``discard'' maps
				\begin{equation}
					\tikzfig{copy_del}
				\end{equation}
				satisfying the identities of a commutative comonoid:
				\begin{equation}
                			\tikzfig{comonoid} 
				\end{equation}
			\item The copy maps are compatible with the monoidal structure in the following way:
				\begin{equation}\label{copy_multiplicative}
					\tikzfig{multiplicativity}
				\end{equation}
			\item $\cC$ is semicartesian.
		\end{itemize}
	\end{definition}
	
	This definition is motivated by the fact that statements in probability theory often refer to the same variable multiple times, which explains why the copy morphisms are relevant.
	In a semicartesian category, we still have the discarding maps $\discard$, for which we use the same symbol in string diagrams.
	For further background on Markov categories we refer the reader to \cite{fritz2019synthetic}. For a history of the concept, see \cite[Introduction and Remark~2.2]{fritz2022free}
	
	A \newterm{state} in a Markov category, or more generally in a semicartesian category, is a morphism from the monoidal unit, i.e.\ in the form $m \colon I\to X$. We denote it by a triangle, 
		\begin{equation*}
			\tikzfig{state}
		\end{equation*}
    	States are the abstract categorical generalization of probability measures.
	
	\begin{example}[Probabilistic Markov categories]
		Here are some Markov categories of interest in probability theory and which are used in this work.
		\begin{itemize}
			\item The category $\cat{FinStoch}$ has as objects finite sets, and as morphisms stochastic matrices with their usual composition. We denote the entries of a matrix $m \colon X \to Y$ by $m(y|x)$, which we can interpret as a discrete transition probability.
			\item The category $\cat{Stoch}$ has as objects measurable sets, and as morphisms Markov kernels with their usual composition. Given such a kernel $k \colon X\to Y$, we denote its value on a point $x\in X$ and on a measurable subset $B\subseteq Y$ by $k(B|x)$, and again we can interpret it as the probability of obtaining an outcome in $B$ given input $x$. 
			\item The category $\cat{BorelStoch}$ is the full subcategory of $\cat{Stoch}$ whose objects are \emph{standard Borel spaces}, i.e.~measurable spaces whose $\sigma$-algebra can be written as the Borel $\sigma$-algebra of a complete separable metric space (often called a \emph{Polish space}).
			\item The category $\qbstoch$ has as objects quasi-Borel spaces and as morphisms kernels between them. We refer to \Cref{sec:qbs} and~\cite{heunen2017convenient,sabok2021semantics} for details.
		\end{itemize}
		In all these examples, states are probability measures.
		In the case of $\cat{FinStoch}$, they are discrete and finitely supported.
	\end{example}
	
	\begin{example}[Semiring-valued kernels]\label{ex:semiring_valuations}
		One can generalize the Markov category $\cat{FinStoch}$ to one in which transition ``probabilities'' are valued in an arbitrary commutative semiring $R$ instead of the semiring of non-negative real numbers.
		Furthermore, by requiring that, for each $x \in X$, the transition probability $m(y | x) \in R$ is non-zero for finitely many $y \in Y$ only, we can extend its objects to include all sets rather than merely the finite ones.
		In this way, we obtain a Markov category $\cat{Kl}(D_R)$, which is equivalently the Kleisli category of the $R$-distribution monad $D_R$ on the category of sets and functions.
		See \cite[Example 3.3]{fritz2023representable} or \cite[Section 5.1]{coumans2013scalars} for more details.
		For example, taking $R \coloneqq \R_+$ to be the nonnegative reals results in the category $\cat{Kl}(D_{\R_+})$, within which $\finstoch$ is the full subcategory on finite sets.
	\end{example}
	We return to Markov categories of semiring-valued kernels throughout the article.
	In particular, we identify properties of the semiring $R$ that characterize when they are positive and representable (\cref{prop:semiring_valuations_positivity}) as well as when they satisfy the causality axiom (\cref{prop:semiring_valuations_causality}).
	We use these to show that the Markov category $\cat{Kl}(D_R)$ is causal (\cref{prop:lattice_causal}) whenever $R$ is a bounded distributive lattice and to construct a Markov category that is positive but not causal (\cref{prop:quantale_ex_positive_causal}).

	\begin{example}[$\mathbb{R}$-valued kernels]\label{ex:negative_prob}
		For some counterexamples in this paper we also need the category $\finstoch_\pm$, which can be defined as the full subcategory of $\cat{Kl}(D_\R)$ on finite sets.
		More explicitly:
		\begin{itemize}
			\item Just as in $\finstoch$, objects are finite sets;
			\item Just as in $\finstoch$, a morphism $f \colon X\to Y$ is a $Y$-by-$X$ matrix whose columns sum to one,
				\begin{equation}
					\sum_{y\in Y} f(y|x) = 1.
				\end{equation}
				However, unlike in $\finstoch$, we do not require that the entries $f(y|x)$ are nonnegative.
			\item The copy and discard structures are the same as in $\finstoch$, which is embedded as a full subcategory.
		\end{itemize}
	\end{example}
	
	\begin{example}[Cartesian categories]
	 Every cartesian monoidal category is a Markov category with the copy morphisms given by the diagonal maps $X\to X\times X$. 
	 The categories $\cat{Set}$ and $\cat{FinSet}$ are examples. 
	\end{example}

	The main theme of this work is the \newterm{positivity axiom}, which we recall in \Cref{def:positivity}.
	It has played an important role in the proofs of theorems on sufficient statistics in Markov categories in \cite{fritz2019synthetic}, and variants of it have also been used in the quantum context \cite{parzygnat2020inverses}.

	In this work, we also make use of the following additional concepts for Markov categories, for which we also refer to earlier sources for more detail.

	\begin{definition}[{\cite[Definition 10.1]{fritz2019synthetic}}]\label{def:det}
		A morphism $f \colon A\to X$ in a Markov category is \newterm{deterministic} if it commutes with copying in the following way:
		\begin{equation}
			\tikzfig{deterministic}
		\end{equation}
		We denote by $\cC_\det$ the wide subcategory of $\cC$ consisting of deterministic morphisms.
	\end{definition}
	For example, a state in $\cat{Stoch}$ is deterministic if and only if it is a probability measure that assigns either $0$ or $1$ to each measurable subset.
	In $\cat{BorelStoch}$, these are exactly the Dirac delta measures.
	A cartesian monoidal category is exactly a Markov category where each morphism is deterministic.

	\begin{definition}[{\cite[Definition~11.5]{fritz2019synthetic}}]\label{def:conditionals}
		Given a morphism $f \colon A\to X\otimes Y$ in a Markov category, a \newterm{conditional} of $f$ \emph{given $X$} is a morphism $f_{|X} \colon X\otimes A\to Y$ such that
		\begin{equation}
			 \tikzfig{conditional}
		\end{equation}
		holds.
		We say that a Markov category \newterm{has conditionals} if every morphism admits a conditional.
	\end{definition}

	For example if $f$ is a state $I \to X \otimes Y$ in $\cat{Stoch}$, the conditional $f_{|X}$ recovers exactly the notion of \emph{regular conditional probability}~\cite[Example~11.3]{fritz2019synthetic}, and for general $f$, a conditional is the same thing with an additional measurable dependence on an additional parameter. 
	$\borelstoch$ has conditionals, but $\stoch$ does not since there are probability measures on product spaces without regular conditional probabilities~\cite{faden1985conditional}.
	We suggest \cite[Section 11]{fritz2019synthetic} for additional context.

	\begin{definition}[{\cite[Definition 13.1]{fritz2019synthetic}}]\label{def:ase}
		Given morphisms $f,g \colon A\to X$ in a Markov category, and a morphism $m \colon \Theta\to A$, we say that $f$ and $g$ are \newterm{$\bm{m}$-almost surely equal}, and write $f \ase{m} g$, if
		\begin{equation}
			 \tikzfig{ase}
		\end{equation}
	\end{definition}
	In $\cat{Stoch}$, especially when $m$ is a state, this recovers exactly almost sure equality with respect to a measure~\cite[Example~13.3]{fritz2019synthetic}.

	\begin{definition}[{\cite[Definition 3.10]{fritz2023representable}}]
		A Markov category $\cC$ is \newterm{representable} if the inclusion functor $\cC_\det\to\cC$ has a right adjoint $P \colon \cC \to \cC_\det$. In this case, we write
		\begin{equation}
			\label{eq:sampling}
			\samp_X \colon PX \to X
		\end{equation}
		for the counit of the adjunction, and $f^\sharp \colon A \to PX$ for the deterministic counterpart of a morphism $f \colon A \to X$.
	\end{definition}

	We denote the right adjoint, and also the induced monad on $\cC_\det$, by $P$ in order to evoke the association with a probability monad.
	For example, $\borelstoch$ is representable, since a Markov kernel $A \to X$ is the same thing as a measurable map $A \to PX$, where $PX$ is the measurable space of probability measures on $X$.
	The counit~\eqref{eq:sampling} is the Markov kernel given by $\samp_X(S|\mu) = \mu(S)$ for every measurable $S \in \Sigma_X$, which we interpret as the kernel which outputs a random sample from every distribution $\mu$.

	A Markov category $\cC$ is representable if and only if it is the Kleisli category of an affine commutative monad on $\cC_\det$~\cite[Section~3.2]{fritz2023representable}. 
	Because of this, a representable Markov category can be thought of as adding probabilistic effects (via the monad) to a cartesian category.
	For example, $\borelstoch$ is the Kleisli category of the Giry monad on $\borelmeas = \borelstoch_\det$.

	\medskip

	Besides the above, a very important notion for this work is that of \emph{dilations}.
	While this concept has been established for some time, in particular for quantum states and processes \cite{chiribella2010purification,chiribella2014dilations,selby2021reconstructing}, dilations have not been systematically studied or applied in the general setting of semicartesian categories before the third author's PhD thesis \cite{houghtonlarsen2021dilations}.
	
	\begin{definition}\label{def:dilation}
		Let $\cD$ be a semicartesian category (i.e.\ $\cD$ need not be a Markov category).
		Let $p \colon A \to X$ be any morphism in $\cD$. 
		A \newterm{dilation} of $p$ is any morphism $\pi \colon A \to X \otimes E$ for some $E \in \cC$ satisfying
		\begin{equation}\label{eq:dilation}
			\tikzfig{dilation}
		\end{equation}
		We call $E$ the \newterm{environment} of the dilation.
	\end{definition}
	
	Intuitively, a dilation $\pi$ describes a process which coincides with $p$ while potentially leaking information to an ``environment'' $E$.
	Conversely, $p$ is obtained from $\pi$ by ignoring the leaked information.
	Dilations have been studied extensively in \cite{houghtonlarsen2021dilations} in order to give an abstract account of the so-called \emph{self-testing} of quantum instruments.
	There, various concepts relating to the structure of dilations were introduced (such as completeness, universality, localisability, purifiability), and these give a flavour of the various types of properties which dilations may or may not enjoy in a given category.
	Dilations have also been utilized in \cite{selby2021reconstructing}\footnotemark{} to formulate a categorical purification axiom as part of a characterization of quantum theory among other theories of physical processes.
	We use dilations for a similar purpose in \Cref{sec:initial_dilations,sec:non-creative}, namely to characterize positivity in Markov categories, as well as distinguishing positive Markov categories among other semicartesian categories.
	We study them in detail in \Cref{sec:semicart}.
	\footnotetext{The dilations used in \cite{houghtonlarsen2021dilations} and in \cite{selby2021reconstructing} are generally \emph{two-sided}, which means that the $g$ in \Cref{def:dilation} may also carry an additional input not shared with $f$. Note, however, that the definitions of two-sided dilations in \cite{houghtonlarsen2021dilations} and \cite{selby2021reconstructing} are not the same. One-sided dilations as per \cref{def:dilation} are sufficient for our purposes. \label{fn:no_two_sided}}%

	We end this background section by relating dilations to convex combinations, which adds some further intuition to the concept of dilation.
	In \cite[Section~3.1]{moss2022ergodic} it is shown that for categories of Markov kernels, a convex decomposition of a state can be expressed by means of categorical composition. 
	We now sketch how convex combinations can also be expressed in terms of dilations, in a way that generalizes to all morphisms (not just states).
	The basic idea is that a dilation $\pi \colon A\to X\otimes E$ of $p \colon A \to X$ can express explicitly the different terms, indexed by $E$, which appear in a convex decomposition of $p$. 
	
	\begin{example}[Convex combinations as dilations]\label{exconv}
		In $\cat{FinStoch}$, consider the state $p \colon I\to X$ over $X = \{x,y,z\}$ given by
		\begin{align}
			p(x) &= 1/6, & p(y) &= 3/6, & p(z) &= 2/6
		\end{align}
		We would like to express the distribution $p$ as a convex combination 
		\begin{equation}\label{eq:convex}
			p = \frac{1}{3} \,p_1 + \frac{2}{3} \,p_2
		\end{equation}
		where 
		\begin{align}
			& \begin{split} p_1(x) &= 1/2,	 \\ p_2(x) &= 0, \end{split} & &
			& \begin{split} p_1(y) &= 1/2,	 \\ p_2(y) &= 1/2, \end{split} & &
			& \begin{split} p_1(z) &= 0,		 \\ p_2(z) &= 1/2. \end{split} & &
		\end{align}
		To this end, we can use the environment $E=\{1,2\}$ and the dilation $\pi \colon I\to X\times E$ given by 
		\begin{align}
			& \begin{split} \pi(x,1) &= \frac{1}{3}\,p_1(x),		 \\ \pi(x,2) &= \frac{2}{3}\,p_2(x), \end{split} & 
			& \begin{split} \pi(y,1), &= \frac{1}{3}\,p_1(x),	 \\ \pi(y,2), &= \frac{2}{3}\,p_2(x), \end{split} & 
			 \begin{split} \pi(z,1) &= \frac{1}{3}\,p_1(x),		 \\ \pi(z,2) &= \frac{2}{3}\,p_2(x). \end{split} & 
		\end{align}
		In this way, we can view $\pi$ as equivalent to the convex decomposition from \eqref{eq:convex}. 
		The object $E$ indexes the ``components'' of $p$ via the conditional distribution $\pi_{|E} \colon E \to X$.
		Furthermore, the coefficients appearing in \Cref{eq:convex} correspond to the marginal distribution of $\pi$ on $E$. 
	\end{example}

	In general, if we have a morphism $p \colon A \to X$ for general $A$, viewed as a parametrized state, the same procedure gives a decomposition where both the coefficients and the components in \Cref{eq:convex} are allowed to depend on a parameter ranging over $A$. 
	While this implements finite convex combinations in $\finstoch$, in measure-theoretic probability such as $\borelstoch$, one obtains infinitary convex decompositions in the form of integrals.

	Let us now establish a link between dilations and the decompositions of states of \cite{moss2022ergodic}.
	Let $\cC$ be a Markov category, and let $p \colon I\to X$ be a state. 
	In \cite{moss2022ergodic}, decomposing $p$ means expressing it as a composition 
	\begin{equation}
		\label{eq:comp_decomp}
		\begin{tikzcd}
			I \ar{r}{m} & E \ar{r}{k} & X
		\end{tikzcd}
	\end{equation}
	for some object $E$, which indexes the components (and $m$ plays the role of the ``coefficients'', see \cite[Section~3.1]{moss2022ergodic} for more). 
	Such a decomposition always gives a dilation given by
	\begin{equation}\label{dilationfromcomp}
		\tikzfig{conv_dil}
	\end{equation}
	and this dilation represents $p$ as a convex combination in a way that is equivalent to~\eqref{eq:comp_decomp}.
	In fact whenever conditionals exist, the two kinds of decomposition are equivalent.
	
	\begin{proposition} 
		Let $\cC$ be a Markov category with conditionals, and let $p \colon I\to X$ be a state. 
		For every object $E$, the construction of \Cref{dilationfromcomp} establishes a bijective correspondence between:
		\begin{itemize}
			\item dilations of $p$ with environment $E$, and
			\item decompositions of $p$ via $E$ up to almost sure equality, i.e.~pairs $(m,[k])$ where 
				\begin{itemize}
					\item $m \colon I\to E$ is a state;
					\item $[k]$ is an equivalence class of morphisms $k \colon E\to X$ modulo \as{$m$} equality;
					\item $k \circ m = p$ for each $k\in [k]$.
				\end{itemize}
		\end{itemize}
	\end{proposition}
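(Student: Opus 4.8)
The plan is to construct an explicit inverse to the assignment of \eqref{dilationfromcomp}, using the hypothesis that $\cC$ has conditionals to reverse the construction. Write $\Phi$ for the map sending a pair $(m,[k])$ to the dilation $\pi \colon I \to X \tensor E$ of \eqref{dilationfromcomp}, that is, the morphism obtained by running $m$, copying its $E$-output, feeding one copy through $k$ to produce the $X$-leg and retaining the other copy as the environment. First I would verify that $\Phi$ is well defined. That $\pi$ is a dilation of $p$ is immediate: discarding the environment $E$ collapses the copy via the comonoid counit law and leaves $k \circ m = p$. That $\Phi$ descends to almost-sure-equivalence classes follows by comparing \eqref{dilationfromcomp} with \Cref{def:ase}: up to the symmetry $\swap_{X,E}$ and cocommutativity of $\cop_E$, the morphism $\pi$ is precisely $(\id_E \tensor k)\circ\cop_E\circ m$, so $k \ase{m} k'$ gives $\Phi(m,k)=\Phi(m,k')$. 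It will also be useful to record that the $E$-marginal of $\pi$ (discard $X$) is exactly $m$, again by the counit law together with the semicartesian identity $\discard_X \circ k = \discard_E$.

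For the inverse $\Psi$, send a dilation $\pi$ to the pair $(m_\pi,\,[\pi_{|E}])$, where $m_\pi \colon I \to E$ is the $E$-marginal of $\pi$ and $\pi_{|E} \colon E \to X$ is a conditional of $\pi$ given $E$, which exists because $\cC$ has conditionals. I would check that this is a valid decomposition in the required sense: $m_\pi$ is a state, $[\pi_{|E}]$ is an $m_\pi$-almost-sure class, and every representative $k$ satisfies $k \circ m_\pi = p$. The last point holds because discarding the retained copy in \Cref{def:ase} turns almost-sure equality into plain equality after precomposition with $m_\pi$, so $k \circ m_\pi$ is independent of the representative and equals the $X$-marginal of $\pi$, namely $p$.

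It remains to show that $\Phi$ and $\Psi$ are mutually inverse. For $\Phi \circ \Psi = \id$, note that the defining equation of a conditional in \Cref{def:conditionals}, applied to $\pi$ regarded as a morphism into $E \tensor X$ and conditioned on $E$, states exactly that $\pi$ is reconstructed from its $E$-marginal $m_\pi$ and the conditional $\pi_{|E}$ by the recipe of \eqref{dilationfromcomp}; that is, $\pi = \Phi(m_\pi, \pi_{|E})$. For $\Psi \circ \Phi = \id$, start from $(m,[k])$ and set $\pi \coloneqq \Phi(m,k)$. The marginal computation above gives $m_\pi = m$, and by construction $k$ satisfies the conditional-defining equation for $\pi$ given $E$, so $k$ is a conditional of $\pi$ given $E$.

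The main obstacle is the final identification of classes: I need $[\pi_{|E}] = [k]$, i.e.\ that any two conditionals of $\pi$ given $E$ agree $m$-almost surely. This is the almost-sure uniqueness of conditionals~\cite{fritz2019synthetic}, and it is the one place where more than formal diagram manipulation is used; invoking it closes the argument. Everything else is bookkeeping with the comonoid laws, the semicartesian structure, and the translation between \eqref{dilationfromcomp} and \Cref{def:ase}.
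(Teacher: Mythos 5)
Your proof is correct and follows essentially the same route as the paper's: the paper argues surjectivity via the existence of conditionals and injectivity by recovering $m$ as the $E$-marginal and reading off the definition of $\as{m}$ equality, which is exactly your $\Psi$ together with your identification of classes. The only cosmetic difference is that you package the final step as the almost-sure uniqueness of conditionals, whereas the paper gets it directly from \Cref{def:ase} (the two are the same observation, since that uniqueness is proved by precisely this unwinding).
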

	
	The proof can be seen as an abstraction of \Cref{exconv}.
	
	\begin{proof}
		\Cref{dilationfromcomp} shows how to map from decompositions to dilations.
		The surjectivity of this map is immediate by the existence of conditionals.
		For injectivity, note that $m$ can be recovered as the marginal on $E$; then the claim holds by definition of $m$-almost sure equality.
	\end{proof}
	
\section{Conditions for positivity of Markov categories}
\label{sec:positivity}

	The positivity axiom for Markov categories, as introduced in~\cite{fritz2019synthetic}, formalizes the idea that any (potentially random) intermediate outcome of a deterministic process is independent of the output given the input.
	The goal of this section is to present a number of reformulations of the positivity axiom for Markov categories.
	These underline its significance and shed further light on its intuitive meaning.
	Along the way, we develop a number of related notions that may be of independent interest.

	\subsection{The positivity axiom}

		Here, we recall the definition of a positive Markov category.
		Readers familiar with this material may proceed directly to \cref{sec:dmp}.
		
		\begin{definition}
			\label{def:positivity}
			A Markov category $\cC$ is \newterm{positive} if whenever $f \colon X\to Y$ and $g \colon Y\to Z$ are such that $g \circ f$ is deterministic, then we have
			\begin{equation}\label{positivity}
				\tikzfig{positivity}
			\end{equation}
		\end{definition}
		
		The positivity property was introduced under this name in \cite{fritz2019synthetic} because the proof that it holds in $\stoch$ relies importantly on the nonnegativity of probabilities.
		It was also observed there that positivity follows from the existence of conditionals.
		Moreover, positivity fails in $\finstoch_\pm$ \cite[Example 11.27]{fritz2019synthetic}, which provides a nice way to see that $\finstoch_\pm$ does not have conditionals.
		
		\begin{remark}\label{rem:positive}
			\begin{enumerate}
				\item The intuition behind the positivity axiom is that if a composite computation $gf$ is deterministic, then it is possible to calculate the intermediate result (the output of $f$) independently of the result of the entire computation.
				\item The stronger notion of strict positivity\footnote{\label{fn:strict_pos}This looks like an unfortunate choice of terminology in hindsight, and we are in favour of changing it in the future.} relativizes the positivity axiom with respect to almost sure equality \cite[Definition 13.16]{fritz2019synthetic}. 
				This property is relevant for proving properties of sufficient statistics, namely versions of Fisher--Neyman factorization theorem \cite[Theorem 14.5]{fritz2019synthetic} and of Basu's theorem \cite[Theorem 15.8]{fritz2019synthetic}.
				We briefly consider strict positivity, and refer to it as \emph{relative positivity}, in \cref{sec:relativized}.
		
				\item \label{item:quantumpositive} Parzygnat \cite[Section~4]{parzygnat2020inverses} has considered a quantum analogue of the positivity axiom, but for subcategories of a (quantum) Markov category. 
				It is indeed related to notions of positivity in the quantum setting.
				In particular, the category of linear unital maps between finite-dimensional C*-algebras includes Schwarz positive maps (and thus also completely positive maps) as a subcategory that satisfies the relevant positivity axiom. 
				Nevertheless, the subcategory of all positive linear maps does not satisfy the categorical notion of positivity \cite[Example 4.9]{parzygnat2020inverses}.
			\end{enumerate}
		\end{remark}

		\begin{remark}
			\label{rem:isos_det}
			As a rather weak information flow axiom, one may consider the property that every isomorphism is deterministic.
			This is not the case in every Markov category~\cite[Remark~10.10]{fritz2019synthetic};
			For example in $\finstoch_\pm$ defined in \cref{ex:negative_prob}, every invertible matrix is an isomorphism, but it is only the permutation matrices among them which are also deterministic.

			The property that every isomorphism is deterministic is a simple consequence of positivity~\cite[Remark~11.28]{fritz2019synthetic}.
			Indeed if $f$ is an isomorphism, then taking $g = f^{-1}$ in \Cref{positivity} and composing with $f$ on the right output shows that $f$ is deterministic.

			Conversely, the condition ``isomorphisms are deterministic'' does not imply positivity.
			Finding a Markov category that witnesses this distinction is, however, not trivial.
			That is why we introduce a Markov category that has not yet been considered elsewhere as far as we know.
			
			Let $\cC$ be the symmetric monoidal category where:
			\begin{itemize}
				\item Objects are commutative monoids, written in additive notation with binary operation $+$ and neutral element $0$;
				\item A morphism $f$ in $\cC(X,Y)$ is given by a submonoid $S_f \subseteq Y$ called \textit{support} and a monoid homomorphism $\tilde{f} \colon S_f \to X$ (in particular, $\tilde{f}$ can be viewed as a partial function $Y \to X$);
				\item A sequential composite $g \circ f$ is given by the function $\tilde{f} \circ \tilde{g}$ with support
					\begin{equation}\label{eq:det_supp_cond}
						S_{g \circ f} \coloneqq \Set{ z \in S_g  \given  \tilde{g}(z) \in S_f } = S_g \cap \tilde{g}^{-1} \left( S_f \right);
					\end{equation} 
				\item The tensor product of objects is the cartesian product of monoids, and the tensor product $f \otimes g$ of morphisms $f \colon X \to Y$ and $g \colon Z \to W$ is given by $\tilde{f} \times \tilde{g} \colon S_f \times S_g \to X \times Z$.
			\end{itemize}
			The monoidal unit is the trivial monoid $I = \{0\}$.
			This becomes a Markov category when we identify
			\begin{itemize}
				\item discarding $\discard_X \colon X \to I$ as given by the unique monoid homomorphism $I \to X$ mapping $0$ to the neutral element of $X$, and 
				\item copying $\cop_X \colon X \to X \otimes X$ as given by addition with full support: $(x_1, x_2) \mapsto x_1 + x_2$.
			\end{itemize}
			In this Markov category, a morphism $h \colon X \to Y$ is deterministic if and only if its support is divisor-closed \cite{garcia2019divisor}, i.e.\ if we have
			\begin{equation}
				y_1 + y_2 \in S_h \quad \implies \quad y_1 \in S_h \text{ and } y_2 \in S_h
			\end{equation}
			for all $y_1, y_2 \in Y$.
			Therefore, isomorphisms in $\cC$, which necessarily have full support, are deterministic.
			
			Let us now show that $\cC$ defined as above is \emph{not} a positive Markov category.
			To demonstrate this, consider the morphisms $f \colon I \to \N$ and $g \colon \N \to \N$ given by
			\begin{align}
				S_f &= \N \setminus \{1\},  &  \tilde{f}(n) &= n, \\
				S_g &= \N,  &  \tilde{g}(n) &= 2n.
			\end{align} 
			Then the composite $g \circ f$ is deterministic, because its support is $\N$.
			This follows from \cref{eq:det_supp_cond} since the image of $\tilde{g}$ is within the support of $f$.
			
			However, \cref{positivity} does not hold for these morphisms.
			The support of its left-hand side contains $(1,1)$ because $1 + \tilde{g}(1) = 3$ belongs to $S_f$.
			On the other hand, the support of its right-hand side is given by $\N \times S_f$, which does not contain $(1,1)$.
			Therefore, the Markov category defined above is not positive, even though all of its isomorphisms are deterministic.
		\end{remark}
	
	\subsection{Deterministic marginal independence is equivalent to positivity}\label{sec:dmp}
	
		In probability theory, it is an obvious fact that a deterministic random variable is independent of any other random variable.
		This fact has previously made a brief appearance in the Markov categories framework in~\cite[Proposition~12.14]{fritz2019synthetic}.
		Here is the general definition.
		
		\begin{definition}\label{def:dmp}
			A Markov category $\cC$ satisfies \newterm{deterministic marginal independence} (\newterm{DMI}) if for every deterministic morphism $p \colon A \to X$, every dilation $\pi \colon A \to X \otimes E$ of $p$ displays the conditional independence of $X$ and $E$ given $A$, i.e.\
			\begin{equation}\label{eq:cond_ind}
				\tikzfig{cond_ind}
			\end{equation}
		\end{definition}
		
		Equivalently, DMI says that every morphism $A \to X \otimes E$ that has a deterministic marginal (say, on $X$) must display conditional independence of $X$ and $E$ given $A$.
		
		Note that the term ``deterministic marginal independence'' is intended to be understood as ``(deterministic marginal) independence'', not as ``deterministic (marginal independence)''.
		
		In words, deterministic marginal independence states that a deterministic output of a process cannot be correlated with another output.
		The following example already illustrates that this property is also related to the nonnegativity of probabilities.
		
		\begin{example}[DMI for stochastic matrices]\label{ex:deterministic marginal property_FinStoch}
			$\finstoch$ satisfies deterministic marginal independence.
			For instance, consider trivial input $A = I$, so that $\pi$ is a joint distribution of $X$ and $E$.
			Deterministic states in $\finstoch$ are point distributions, so that we have $p = \delta_{x_0}$ for some $x_0 \in X$. 
			The assumption that $\pi$ dilates $p$ means that for $x \in X$,
			\begin{equation}\label{eq:deterministic marginal property_FinStoch}
				\sum_{e \in E} \pi(x,e) = \begin{cases} 1 & \text{if } x = x_0, \\ 0 & \text{otherwise} \end{cases}
			\end{equation}
			holds.
			By the nonnegativity of probabilities, this implies that $\pi(x,e)$ vanishes for all $e$ whenever $x \neq x_0$.
			Consequently, the other marginal of $\pi$ is given by 
			\begin{equation}
				\pi_E(e) = \pi(x_0,e),
			\end{equation}
			and the whole joint distribution can be written as
			\begin{equation}
				\pi(x,e) = \delta_{x_0}(x) \cdot \pi(x_0, e) = p(x) \cdot \pi_E(e)
			\end{equation}
			which is precisely the desired \Cref{eq:cond_ind}.
			For more general morphisms, the same holds, where now both $p$ and $\pi$ depend on an additional parameter.
		\end{example}
		
		Another closely related notion is the following one.
		
		\begin{definition}
			\label{det_in_X}
			A morphism $q \colon A\to X\otimes E$ is \newterm{deterministic in $X$} if and only if it satisfies
			\begin{equation}\label{eq:det_in_X}
				\tikzfig{det_in_X}
			\end{equation} 
		\end{definition}
		
		Discarding the output $E$ shows that if $q$ is deterministic in $X$, then its marginal $q_X$ is deterministic. 
		However, the converse does generally not hold, as the following example shows. 
		
		\begin{example}[Deterministic marginals with negative probabilities]
			\label{pm_fails_dmp}
			In $\finstoch_\pm$, a joint distribution $q \colon 1\to X\otimes E$ is deterministic in $X$ if and only if for all outcomes $x_1, x_2 \in X$ and $e \in E$, we have
			\begin{equation}\label{detXstoch}
				\delta_{x_1, x_2} \cdot q(x_2, e)  = q_X(x_1) \cdot q(x_2, e) .
			\end{equation}
			In $\finstoch$, this property is equivalent to the marginal $q_X$ being deterministic; this is an instance of \Cref{thm:eqpos} below.
			In $\finstoch_\pm$, instead, there are joint distributions $q$ which are not deterministic in $X$ although their marginal $q_X$ is deterministic. 
			For example, for $X = \{x,y\}$ and $E = \{a,b\}$, taking the signed distribution with joint probabilities given by
			\begin{equation}
				\begin{array}{c|cc}
					q & a & b \\
					\hline
					x & 1/2 & 1/2 \\
					y & 1/2 & -1/2
				\end{array}
			\end{equation}
			has marginal $q_X$ equal to $\delta_x$, which is deterministic. However, setting $x_1=x_2=y$ and $e=a$ in \eqref{detXstoch} results in
			\begin{equation}
				1\cdot 1/2 \ne 0 \cdot 1/2 .
			\end{equation}
			The culprit is that the marginal $q_X$ gives mass zero to $y$, but the joint probability $q$ gives nonzero mass to the point $(y,a)$. This is possible because in this category we are allowing negative probabilities, and some mass cancels out when we form the marginal.
		\end{example}
		
		\begin{theorem}\label{thm:eqpos}
			For a Markov category $\cC$, the following are equivalent:
			\begin{enumerate}[label=(\roman*)]
				\item\label{condpos} $\cC$ is positive.
				\item\label{conddmp} $\cC$ satisfies deterministic marginal independence.
				\item\label{conddetX} For all $q \colon A \to X \otimes E$,
					\[
						q \textrm{ is deterministic in } X \quad \Longleftrightarrow \quad q_X \textrm{ is deterministic}.
					\]
			\end{enumerate}
		\end{theorem}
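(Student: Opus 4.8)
The plan is to prove the cycle (i)~$\Rightarrow$~(iii)~$\Rightarrow$~(ii)~$\Rightarrow$~(i), exploiting the fact that one direction of the biconditional in (iii) holds with no hypothesis on $\cC$. Throughout I write $q_X$ and $q_E$ for the two marginals of a morphism $q \colon A \to X \otimes E$. The glue between (ii) and (iii) is the following elementary observation. Unwinding \Cref{det_in_X}, equation \eqref{eq:det_in_X} asserts $(\cop_X \otimes \id_E)\circ q = (q_X \otimes q)\circ \cop_A$ up to reordering of outputs. Discarding the $E$-leg from both sides recovers, as already noted after \Cref{det_in_X}, that ``$q$ deterministic in $X$'' forces $q_X$ deterministic; this is the easy direction of (iii). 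Discarding instead the $X$-leg coming from the right-hand copy of $q$ collapses that copy to $q_E$ and yields $q = (q_X \otimes q_E)\circ \cop_A$, which is exactly the conditional-independence conclusion \eqref{eq:cond_ind} of DMI. Conversely, if $q = (q_X \otimes q_E)\circ\cop_A$ and $q_X$ is deterministic, copying the $X$-leg and pushing the copy through $q_X$ reproduces \eqref{eq:det_in_X}. Thus, for a morphism with deterministic $X$-marginal, ``deterministic in $X$'' and the product form \eqref{eq:cond_ind} are interchangeable.

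For (i)~$\Rightarrow$~(iii): given $q \colon A \to X \otimes E$ with $q_X$ deterministic, I would apply positivity (\Cref{def:positivity}) to $f := q$ and $g := \id_X \otimes \discard_E \colon X \otimes E \to X$. Here $g \circ f = q_X$ is deterministic by hypothesis, so \eqref{positivity} holds, and reading off its two sides produces precisely $(\cop_X \otimes \id_E)\circ q = (q_X \otimes q)\circ\cop_A$, i.e.\ $q$ is deterministic in $X$. Combined with the automatic converse from the previous paragraph, this establishes the full biconditional (iii).

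For (iii)~$\Rightarrow$~(ii): if $p \colon A \to X$ is deterministic and $\pi \colon A \to X \otimes E$ dilates $p$, then $\pi_X = p$ is deterministic, so (iii) makes $\pi$ deterministic in $X$, and the preliminary observation upgrades this to $\pi = (p \otimes \pi_E)\circ\cop_A$, which is \eqref{eq:cond_ind}; hence DMI holds. For (ii)~$\Rightarrow$~(i): given $f \colon X \to Y$, $g \colon Y \to Z$ with $g\circ f$ deterministic, I would form the dilation $\pi \colon X \to Z \otimes Y$ of $g\circ f$ by running $f$, copying $Y$, and applying $g$ to one copy, so the environment is $E := Y$. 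Its $Z$-marginal is $g\circ f$ (deterministic) and its $Y$-marginal is $f$, so DMI forces $\pi = \bigl((g\circ f) \otimes f\bigr)\circ\cop_X$; unwinding the definition of $\pi$ turns this into \eqref{positivity}, up to the symmetry swapping the two output legs.

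The diagram bookkeeping is routine; the single load-bearing idea is the construction in the last step of a dilation of $g\circ f$ whose two marginals are $g\circ f$ and $f$, since this is exactly what lets DMI view the composite $g\circ f$ as a deterministic marginal and thereby read off positivity. In practice I expect the most error-prone point to be checking that discarding the correct leg of \eqref{eq:det_in_X} yields \eqref{eq:cond_ind} itself rather than a permuted variant, so keeping careful track of which $X$-wire is copied and which is an independent marginal is where I would be most cautious.
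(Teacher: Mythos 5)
Your proof is correct and relies on essentially the same ingredients as the paper's: positivity instantiated with $g$ a marginalization, the dilation $(g \otimes \id_Y) \circ \cop_Y \circ f$ of $g \circ f$ for recovering positivity from DMI, and marginalization of the middle output of \eqref{eq:det_in_X} to obtain \eqref{eq:cond_ind}. The only difference is organizational: you run the cycle (i)$\Rightarrow$(iii)$\Rightarrow$(ii)$\Rightarrow$(i), applying positivity directly to $q$ so that its conclusion is literally the ``deterministic in $X$'' equation, which lets you dispense with the paper's separate chain of equations proving (ii)$\Rightarrow$(iii).
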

		\begin{proof} \hfill 
			\begin{itemize}
				\item[$\ref{condpos}\Rightarrow\ref{conddmp}$:]
					This was proven as~\cite[Proposition~12.14]{fritz2019synthetic}, and we recall the argument here for completeness.
					If $p$ is deterministic, then in the defining \Cref{positivity} we take a dilation $\pi$ thereof in place of $f$ and the morphism
					\begin{equation}
						\tikzfig{margx}
					\end{equation}
					in place of $g$.
					Then the composite $g \circ f$ is equal to $p$, which is deterministic by assumption. 
					\Cref{positivity} now reads
					\begin{equation}
						\tikzfig{posp}
					\end{equation}
					We then get the desired conditional independence by marginalizing over the leftmost output and swapping the other two,
					\begin{equation}
						\tikzfig{cond_ind}
					\end{equation}
					implicitly using the commutativity of $\cop$.
					
				\item[$\ref{conddmp}\Rightarrow\ref{condpos}$:]
					Consider $f \colon A\to X$ and $g \colon X\to Y$ with a deterministic composite $g \circ f$. 
					Then the morphism
					\begin{equation}
						\tikzfig{defp} 
					\end{equation}
					is a dilation of its $Y$-marginal $g \circ f$, which is deterministic by assumption. 
					Therefore deterministic marginal independence applies and gives
					\begin{equation}
						\tikzfig{dmpp}
					\end{equation}
					as was to be shown.
		
				\item[$\ref{conddmp}\Rightarrow\ref{conddetX}$:]
					We already noted that determinism in $X$ of $q$ always implies that its $X$-marginal $q_X$ is deterministic, so we focus on the backward implication, assuming that $\cC$ satisfies DMI.
		
					Consider a morphism $q\colon A \to X \otimes E$ with deterministic marginal $q_X$.
					By DMI, $q$ displays the conditional independence of $X$ and $E$ given $A$.
					Using both of these properties entails
					\begin{equation}\label{eq:proof_dmp_to_detX}
						\tikzfig{proof_dmp_to_detX}
					\end{equation}
					so that $q$ is indeed deterministic in $X$.
					The first and last equations hold because of the conditional independence; the second one uses the fact that $q_X$ is deterministic; and the third one holds by associativity of copying.
		
				\item[$\ref{conddetX}\Rightarrow\ref{conddmp}$:]
					Let $\pi \colon A\to X\otimes E$ be a dilation of a deterministic morphism $p \colon A \to X$.
					Then $\pi$ is necessarily deterministic in $X$ by the assumed Property \ref{conddetX}. 
					Marginalizing the middle output in \Cref{eq:det_in_X} gives
					\begin{equation}
						\tikzfig{cdh}
					\end{equation}
					which is the desired conditional independence.
					\qedhere
			\end{itemize}
		\end{proof}
		\begin{remark}
			\label{rem:glob_det}
			Let us call a morphism $p \colon A \to X$ \hypertarget{def:glob_det}{\newterm{globally deterministic}} if every dilation of $p$ is deterministic in $X$.
			We can then express Property \ref{conddetX} as saying 
			\begin{equation}
				\cC_{\rm det} = \cC_{\rm gd}
			\end{equation}
			where $\cC_{\rm gd}$ denotes the class of globally deterministic morphisms in $\cC$.
		\end{remark}
	
		As a consequence of \cref{thm:eqpos}, \Cref{pm_fails_dmp} corresponds to the failure of positivity in $\finstoch_\pm$ as noticed in \cite[Example 11.27]{fritz2019synthetic}.

		We can also use \Cref{thm:eqpos} to establish the conditions under which Markov categories of semiring-valued stochastic matrices from \Cref{ex:semiring_valuations} are positive.
		First, let us characterize the deterministic morphisms therein.
		To that end, recall that a commutative semiring $R$ is \newterm{entire} if $0 \neq 1$ and if $R$ has no zero divisors in the sense that 
		\begin{equation}
			rs = 0  \quad \implies \quad  r = 0 \;\text{ or }\; s = 0
		\end{equation}
		holds.
	
		\begin{lemma}\label{lem:pure_is_deterministic}
			Let $R$ be an entire commutative semiring.
			A morphism $f \colon A \to X$ in $\cat{Kl}(D_R)$ is deterministic if and only if it can be expressed as 
			\begin{equation}\label{eq:pure_is_deterministic}
				f(x | a) = \delta_{f^{\flat}(a)}(x)
			\end{equation}
			for a function $f^{\flat} \in \cat{Set}(A,X)$, where $\delta_{x'} \in D_R(X)$ is the delta distribution given by
			\begin{equation}
				x \mapsto
					\begin{cases}
						1 &  \text{if }  x = x',   \\
						0 &  \text{if }  x \neq x' .
		      		\end{cases}
			\end{equation}
			Moreover, $f^\flat$ is then uniquely determined by $f$.
		\end{lemma}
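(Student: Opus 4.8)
The plan is to spell out the determinism condition of \Cref{def:det} concretely in $\cat{Kl}(D_R)$ and then read off the claimed form. Recall that the copy map $\cop_X$ in $\cat{Kl}(D_R)$ is the Kleisli arrow $x \mapsto \delta_{(x,x)}$. Composing, one computes that the left-hand side $\cop_X \circ f$ sends $a$ to the distribution $(x_1,x_2) \mapsto \delta_{x_1}(x_2)\,f(x_1 \mid a)$ on $X \otimes X$, while the right-hand side $(f \otimes f) \circ \cop_A$ sends $a$ to $(x_1,x_2) \mapsto f(x_1 \mid a)\,f(x_2 \mid a)$. Hence $f$ is deterministic if and only if, for every $a \in A$ and all $x_1,x_2 \in X$,
\begin{equation}
	\delta_{x_1}(x_2)\,f(x_1 \mid a) = f(x_1 \mid a)\,f(x_2 \mid a) .
\end{equation}
Separating the diagonal from the off-diagonal terms, this is equivalent to the two families of scalar equations $f(x \mid a) = f(x \mid a)^2$ for all $x$, and $f(x_1 \mid a)\,f(x_2 \mid a) = 0$ whenever $x_1 \neq x_2$.

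First I would dispatch the easy direction. If $f(x \mid a) = \delta_{f^{\flat}(a)}(x)$ for a function $f^{\flat} \in \cat{Set}(A,X)$, substituting into the displayed equation makes both sides vanish unless $x_1 = f^{\flat}(a)$, in which case both reduce to $\delta_{f^{\flat}(a)}(x_2)$; so $f$ is deterministic. Note that this direction uses nothing about $R$ beyond its being a commutative semiring.

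For the converse, fix $a \in A$ and suppose $f$ is deterministic. The off-diagonal equations say that $f(x_1 \mid a)\,f(x_2 \mid a) = 0$ for all $x_1 \neq x_2$. Here is where entireness enters: since $R$ has no zero divisors, such a product can vanish only if one of its factors does, so at most one $x \in X$ can have $f(x \mid a) \neq 0$. On the other hand, the normalization condition $\sum_{x} f(x \mid a) = 1$ together with $0 \neq 1$ forces at least one nonzero entry. Hence there is a unique $x_0 \in X$ with $f(x_0 \mid a) \neq 0$, and then normalization collapses to $f(x_0 \mid a) = 1$, giving $f(\cdot \mid a) = \delta_{x_0}$. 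Setting $f^{\flat}(a) \coloneqq x_0$ produces a function $A \to X$ of the desired form. The main obstacle---really the only substantive point---is precisely this use of entireness: without the absence of zero divisors the off-diagonal conditions would not force the support to be a single point (as the signed example of \Cref{pm_fails_dmp} illustrates), and without $0 \neq 1$ the normalization would be vacuous.

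Finally, uniqueness of $f^{\flat}$ is immediate from this description: $f^{\flat}(a)$ is characterized as the unique element of $X$ on which $f(\cdot \mid a)$ is nonzero, so any function satisfying \eqref{eq:pure_is_deterministic} must agree with it pointwise.
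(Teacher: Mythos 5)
Your proof is correct and follows essentially the same route as the paper's: spell out the determinism equation componentwise, use the absence of zero divisors to kill the off-diagonal products, and invoke normalization together with $0 \neq 1$ to force a single entry equal to $1$. The only difference is presentational—you derive the Kleisli composites explicitly and separate diagonal from off-diagonal conditions, where the paper states the componentwise equation directly—so there is nothing substantive to add.
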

		
		In other words, deterministic morphisms of $\cat{Kl}(D_R)$ coincide with its pure morphisms in the sense of \cite[Definition 2.5]{moss2022probability}.
		However, they do not necessarily coincide with pure or dilationally pure morphism in the sense of \cite{selby2021reconstructing} and \cite{houghtonlarsen2021dilations} respectively.
		Note that the function $f^{\flat}$ is unique by $1 \neq 0$ in $R$.

		\begin{proof}
			Since every morphism of that form is clearly deterministic, it suffices to show the forward implication.
			Let $f$ be deterministic, meaning that 
			\begin{equation}\label{eq:deterministic_distribution}
				f(x | a) \, f(x' | a) = f(x | a) \, \delta_{x'}(x)
			\end{equation}
			holds for all $a \in A$ and all $x,x' \in X$.
			For every $a$ there is an $x$ with $f(x | a) \neq 0$.
			Since $R$ has no zero divisors by assumption, \Cref{eq:deterministic_distribution} which takes the form $f(x | a) \, f(x' | a) = 0$ then implies $f(x' | a) = 0$ for every $x'$ distinct from $x$.
			Normalization then forces $f(x | a) = 1$, so that $f$ satisfies \Cref{eq:pure_is_deterministic}.

			The uniqueness is clear by $0 \neq 1$ in $R$; equivalently, the canonical Kleisli functor $\cat{Set} \to \cat{Kl}(D_R)$ is faithful.
		\end{proof}
		
		While the proof of \Cref{lem:pure_is_deterministic} is instructive, it is worth noting that this statement also follows from \cite[Propositions 3.4 and 3.6]{fritz2023representable}, which in turn also implies that the Markov category $\cat{Kl}(D_R)$ is representable.\footnote{Because of $\Kl(D_R)_\det = \Setcat$, the relevant right adjoint to the inclusion $\Kl(D_R)_\det \hookrightarrow \Kl(D_R)$ is simply the usual Kleisli adjoint of the inclusion $\Setcat \hookrightarrow \Kl(D_R)$.}
		
		\begin{definition}\label{def:zero_sum_free}
			A semiring $R$ is \newterm{zerosumfree} if it satisfies
			\begin{equation}\label{eq:zero_sum_free}
				r + s = 0 \quad \implies \quad r = s = 0
			\end{equation}
			for all $r,s \in R$.
		\end{definition}
		
		\begin{proposition}\label{prop:semiring_valuations_positivity}
			Let $R$ be an entire commutative semiring.
			Then the Markov category $\cat{Kl}(D_R)$ from \Cref{ex:semiring_valuations} is positive if and only if $R$ is zerosumfree.
		\end{proposition}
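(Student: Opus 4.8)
The plan is to run everything through the third characterization of positivity in \Cref{thm:eqpos}: the category $\cat{Kl}(D_R)$ is positive if and only if, for every morphism $q \colon A \to X \otimes E$, determinism in $X$ is equivalent to determinism of the marginal $q_X$. Since the implication ``$q$ deterministic in $X$'' $\Rightarrow$ ``$q_X$ deterministic'' always holds (as noted after \Cref{det_in_X}), it suffices to control the converse, so positivity reduces to the requirement that a deterministic $X$-marginal force $q$ to be deterministic in $X$, for all $q$. I would then make both conditions explicit over $R$. By \Cref{lem:pure_is_deterministic}, $q_X$ is deterministic exactly when $q_X(x \mid a) = \sum_e q(x, e \mid a) = \delta_{x_0(a)}(x)$ for some function $x_0 \colon A \to X$. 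Carrying out the computation of \Cref{pm_fails_dmp} pointwise in the parameter $a$ shows that $q$ is deterministic in $X$ if and only if
\[
	\delta_{x_1, x_2}\, q(x_2, e \mid a) = q_X(x_1 \mid a)\, q(x_2, e \mid a)
\]
holds for all $a \in A$, $x_1, x_2 \in X$ and $e \in E$.

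Substituting $q_X(x_1 \mid a) = \delta_{x_0(a)}(x_1)$ and inspecting the cases $x_1 = x_2$ and $x_1 \neq x_2$ separately, this display collapses to the single requirement that $q(x, e \mid a) = 0$ whenever $x \neq x_0(a)$. Thus positivity of $\cat{Kl}(D_R)$ is equivalent to: for every $q$ with deterministic $X$-marginal, every entry $q(x, e \mid a)$ with $x \neq x_0(a)$ vanishes. For the ``if'' direction, assume $R$ is zerosumfree. Given such a $q$, for any $a$ and any $x \neq x_0(a)$ the normalization of the marginal yields $\sum_e q(x, e \mid a) = q_X(x \mid a) = 0$, a \emph{finite} sum in $R$ since the support is finite. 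By \Cref{def:zero_sum_free}, extended from two summands to finitely many by an immediate induction, every term $q(x, e \mid a)$ must vanish. Hence $q$ is deterministic in $X$, and $\cat{Kl}(D_R)$ is positive.

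For the ``only if'' direction, suppose $R$ is not zerosumfree, so there exist $r, s \in R$ with $r + s = 0$ that are not both zero; since $s = 0$ would force $r = r + s = 0$ (and symmetrically), in fact both $r$ and $s$ are nonzero. I would then reproduce the counterexample of \Cref{pm_fails_dmp} over $R$: taking $A = I$, $X = \{x, y\}$ and $E = \{a, b\}$, define the state $q$ by $q(x, a) = 1$, $q(x, b) = 0$, $q(y, a) = r$, $q(y, b) = s$. This is a legitimate morphism of $\cat{Kl}(D_R)$, as its entries sum to $1 + (r + s) = 1$, and its marginal $q_X = \delta_x$ is deterministic; yet $q(y, a) = r \neq 0$ with $y \neq x = x_0$, so by the criterion above $q$ fails to be deterministic in $X$. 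This violates property~\ref{conddetX} of \Cref{thm:eqpos}, so $\cat{Kl}(D_R)$ is not positive.

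The only genuinely delicate point is the translation of the string-diagrammatic condition of \Cref{det_in_X} into the explicit scalar equation together with the bookkeeping of the parameter $a$; once that reduction is in place, the whole statement rests on the elementary observation that zerosumfreeness is precisely what permits passing from ``a finite sum of entries is zero'' to ``each entry is zero'', which is exactly the nonnegativity-style ingredient underlying positivity.
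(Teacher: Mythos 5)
Your proof is correct and takes essentially the same approach as the paper's: both route the statement through \Cref{thm:eqpos} together with \Cref{lem:pure_is_deterministic}, both forward directions rest on the identical key computation (zerosumfreeness, extended to finite sums, forces all entries $q(x,e|a)$ with $x$ off the support of the deterministic marginal to vanish), and both converse directions use virtually the same $2\times 2$ counterexample built from $r+s=0$ with $r\neq 0$. The only cosmetic difference is that you invoke characterization \ref{conddetX} (determinism in $X$) of \Cref{thm:eqpos} where the paper invokes characterization \ref{conddmp} (deterministic marginal independence).
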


		This generalizes the fact that $\cat{FinStoch}_\pm$ is not positive,
		and can be taken as further motivation for the term ``positivity''.

		\begin{proof}
			We use the characterization of positive Markov categories as those satisfying deterministic marginal independence.
			
			First, assume that $R$ is zerosumfree and consider a dilation $\pi \colon A \to X \otimes E$ of a deterministic $p \colon A \to X$.
			By \Cref{lem:pure_is_deterministic}, we have
			\begin{equation}
				\sum_{e \in E} \pi(x,e|a) = p(x | a) = \delta_{p^{\flat}(a)}(x).
			\end{equation}
			Therefore by zerosumfreeness we have $\pi(x,e|a) = 0$ for every $e \in E$ and every $x \in X$ distinct from $p^\flat(a) \in X$.
			This means that 
			\begin{equation}
				\pi(x,e|a) = \delta_{p^{\flat}(a)}(x) \, \tilde{p}(e|a)
			\end{equation}
			holds, where $\tilde{p} \colon A \to E$ is the $E$-marginal of $\pi$.
			But this is precisely the statement of deterministic marginal independence.
			
			Conversely, assume that $R$ is not zerosumfree, i.e.\ that there exist $r$ and $s$ such that $r + s = 0$ and $r \neq 0$.
			Consider a morphism $\pi \colon I \to X \otimes E$ with $X = \{x,x'\}$ and $E = \{e, e'\}$, 
			given by the joint distribution $\pi \in D_R(X \times E)$ with
			\begin{align}
				\begin{aligned}
					\pi(x, e) &= 1, \\
					\pi(x, e') &= 0,
				\end{aligned} &&
				\begin{aligned}
					\pi(x', e) &= r, \\
					\pi(x', e') &= s.
				\end{aligned}
			\end{align}
			Then the marginal $p \coloneqq \pi_X$ is deterministic.
			However, $\pi$ is not equal to the product of its marginals, since the latter is instead given by
			\begin{align}
				\begin{aligned}
					\pi_X (x) \, \pi_E(e) &= 1 + r, \\
					\pi_X (x) \, \pi_E(e') &= s,
				\end{aligned} &&
				\begin{aligned}
					\pi_X (x') \, \pi_E(e) &= 0, \\
					\pi_X (x') \, \pi_E(e') &= 0.
				\end{aligned}
			\end{align}
			This means that $\cat{Kl}(D_R)$ does not satisfy deterministic marginal independence and thus it is not positive.
		\end{proof}
	
	\subsection{Positivity of representable Markov categories}
	
		We will show next that the following existing notion can be used to detect the positivity of a representable Markov category, in terms of the associated commutative monad on the cartesian monoidal category $\cC_{\rm det}$ of deterministic morphisms.
		
		\begin{definition}[{Jacobs \cite[Definition 1]{stronglyaffine}}]
			A strong monad $(P,\mu,\delta)$ with strength $s$ on a cartesian monoidal category $\cD$ is \newterm{strongly affine} if for all objects $X$ and $Y$ of $\cD$, the diagram
			\begin{equation}\label{pbpos}
				\begin{tikzcd}
					X\times PY \ar{r}{s} \ar{d}{\pi_1} & P(X\times Y) \ar{d}{P(\pi_1)} \\
					X \ar{r}{\delta} & PX
				\end{tikzcd}
			\end{equation}
			is a pullback in $\cD$, where $\pi_1$ is the projection map.
		\end{definition} 
		
		Taking $X = Y = 1$ shows that a strongly affine monad is in particular affine (satisfies $P1 \cong 1$).
		Therefore the adverb ``strongly'' cleverly refers to both a strengthening of affineness and to the fact that the condition involves the strength $s$.
		
		\begin{proposition}
			\label{rep_pos}
			Let $\cC$ be a representable Markov category with affine commutative monad $P$ on $\cC_\det$, so that $\cC = \Kl(P)$. Then $\cC$ is positive if and only if $P$ is strongly affine.
		\end{proposition}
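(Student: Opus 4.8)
The plan is to prove both directions through the reformulation of positivity as deterministic marginal independence (DMI) from \Cref{thm:eqpos}, translating everything into the language of the monad $P$ on $\cD = \cC_\det$. Recall that in $\cC = \Kl(P)$ a morphism $A \to X$ is a $\cD$-morphism $A \to PX$; that by representability (the inclusion $\cC_\det \to \cC$ is the Kleisli inclusion $\cD \to \Kl(P)$) the deterministic ones are exactly the images $\delta_X \circ h$ of $\cD$-morphisms $h \colon A \to X$; and that taking the $X$- or $E$-marginal of a morphism $\hat\pi \colon A \to P(X \times E)$ amounts to post-composition with $P(\pi_1)$ or $P(\pi_2)$, where $\pi_1,\pi_2$ are the product projections (affineness of $P$, i.e.\ $P1 \cong 1$, is what makes discarding well-behaved). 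Under this dictionary the cospan of diagram \eqref{pbpos} says precisely that $b \colon T \to P(X \times Y)$ is a dilation whose $X$-marginal $P(\pi_1)\circ b$ is the deterministic morphism $\delta_X \circ a$.

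The conceptual heart of the argument is the identification of conditional independence with factorization through the strength. Spelling out the Kleisli composite in the defining diagram \eqref{eq:cond_ind} for a dilation whose $X$-leg is deterministic, one finds that a morphism $\hat\pi \colon A \to P(X \times E)$ displays conditional independence of $X$ and $E$ given $A$, with deterministic $X$-marginal $\delta_X \circ p^\flat$, if and only if $\hat\pi = s \circ \langle p^\flat, \pi_E \rangle$ for the $E$-marginal $\pi_E \colon A \to PE$. Here I will use the two coherence identities $P(\pi_1) \circ s = \delta_X \circ \pi_1$ and $P(\pi_2) \circ s = \pi_2$, both of which follow from naturality of the strength together with $P1 \cong 1$; the first also shows that \eqref{pbpos} commutes, and the second will let me recover marginals from factorizations.

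For the forward direction, assume $P$ is strongly affine and let $\hat\pi \colon A \to P(X \times E)$ be a dilation of a deterministic $p = \delta_X \circ p^\flat$, so that $P(\pi_1)\circ\hat\pi = \delta_X \circ p^\flat$. Viewing $(p^\flat, \hat\pi)$ as a cone over the cospan of \eqref{pbpos}, the pullback property supplies a unique $u \colon A \to X \times PE$ with $\pi_1 \circ u = p^\flat$ and $s \circ u = \hat\pi$; writing $u = \langle p^\flat, u_2\rangle$ and applying $P(\pi_2)$ identifies $u_2$ with the $E$-marginal $\pi_E$. Thus $\hat\pi = s \circ \langle p^\flat, \pi_E\rangle$, which by the previous paragraph is exactly the conditional independence demanded by DMI, so $\cC$ is positive.

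For the converse, assume $\cC$ is positive, hence satisfies DMI, and let $(a,b)$ be any cone over the cospan, i.e.\ $a \colon T \to X$ and $b \colon T \to P(X \times Y)$ with $\delta_X \circ a = P(\pi_1)\circ b$. This equation says $b$ is a dilation of the deterministic morphism $\delta_X \circ a$, so DMI yields conditional independence of $X$ and $Y$ given $T$; by the identification above this means $b = s \circ \langle a, b_Y\rangle$ for the $Y$-marginal $b_Y = P(\pi_2)\circ b$. Hence $u \coloneqq \langle a, b_Y\rangle$ mediates the cone. For uniqueness, any competing $\langle a, c\rangle$ with $s \circ \langle a,c\rangle = b$ satisfies $c = P(\pi_2)\circ s \circ \langle a,c\rangle = P(\pi_2)\circ b = b_Y$, so the mediating map is unique and \eqref{pbpos} is a pullback. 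I expect the main obstacle to lie in the central identification of the second paragraph---carefully matching the string-diagrammatic conditional-independence equation \eqref{eq:cond_ind} in $\Kl(P)$ with the monad-theoretic factorization $s \circ \langle p^\flat, \pi_E\rangle$, and verifying the strength coherence identities under affineness---rather than in either half of the pullback argument, which is then essentially formal.
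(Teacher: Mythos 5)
Your proposal is correct and follows essentially the same route as the paper's proof: both reduce positivity to deterministic marginal independence via Theorem~\ref{thm:eqpos}, work entirely in $\cC_\det$, identify the pullback cone condition with "dilation of a deterministic morphism," and match conditional independence with factorization through the strength (the paper expands $s = \nabla \circ (\delta \times \id)$ and "samples," which is exactly your central identification), using the same coherence identity $P(\pi_2)\circ s = \pi_2$ to pin down the mediating map's components. Your write-up of the converse direction is in fact more explicit than the paper's, which merely remarks that the same reasoning covers every instance of the pullback's universal property.
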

		
		\begin{proof}			
			By \Cref{thm:eqpos}, it suffices to show that $\cC$ satisfies deterministic marginal independence if and only if $P$ is strongly affine.
		
			In a representable Markov category, a morphism $p \colon A \to X$ is deterministic if and only if it satisfies $p^\sharp = \delta p$~\cite[Proposition~3.12]{fritz2023representable}.
			For the remainder of the proof, we work in $\cC_\det$ only.
			Then by the previous statement, a given $q \colon A \to X \times Y$ has deterministic first marginal $q_X$ if and only if there exists $f \colon A \to X$ (namely $q_X$) such that the diagram
			\begin{equation}
				\begin{tikzcd}
					A \ar[bend left]{drr}{q^\sharp} \ar[swap,bend right]{ddr}{f} \\
					& X\times PY \ar{r}[swap]{s} \ar{d}{\pi_1} & P(X\times Y) \ar{d}{P(\pi_1)} \\
					& X \ar{r}[swap]{\delta} & PX
				\end{tikzcd}
			\end{equation}
			commutes.
			Now, if the square \eqref{pbpos} is a pullback, then $q$ factors (uniquely) through the strength $s$, i.e.~there exists a unique morphism $u \colon X\to Y\times PZ$ making the diagram 
			\begin{equation}
				\begin{tikzcd}
					A \ar[bend left]{drr}{q^\sharp} \ar[swap,bend right]{ddr}{q_X} \ar[dotted, "u" description]{dr} \\
					& X\times PY \ar{r}[swap]{s} \ar{d}{\pi_1} & P(X\times Y) \ar{d}{P(\pi_1)} \\
					& X \ar{r}[swap]{\delta} & PX
				\end{tikzcd}
			\end{equation}
			commute.
			By the universal property of the product $X\times PY$ in $\cC_\det$, the map $u$ is determined by its components. 
			Its $X$-component must be equal to $q_X$ and its $PY$-component must be equal to the marginal $q_Y^\sharp$ in order for the diagram to commute.
			Indeed, since the diagram
			\begin{equation}
				\begin{tikzcd}
					X\times PY \ar{dr}[swap]{\pi_2} \ar{r}{s} & P(X\times Y) \ar{d}{P(\pi_2)} \\
					& PY
				\end{tikzcd}
			\end{equation}
			commutes, we have that $\pi_2\circ u = P(\pi_2)\circ s \circ u = P(\pi_2)\circ q^\sharp = q_Y^\sharp$.
			Recall now that the strength $s$ is given by the following composition,
			\begin{equation}
				\begin{tikzcd}
					X\times PY \ar{r}{\delta\times\id} & PX \times PY \ar{r}{\nabla} & P(X\times Y),
				\end{tikzcd}
			\end{equation}
			where $\nabla$ denotes the lax symmetric monoidal structure morphism of $P$.
			Therefore, if \eqref{pbpos} is a pullback, then we get
			\begin{equation}
				q^\sharp = s\circ u = \nabla\circ (\delta\times\id) \circ (f,q_Y^\sharp) = \nabla \circ ( q_X^\sharp, q_Y^\sharp).
			\end{equation}
			Sampling on both sides produces the desired factorization of \Cref{eq:cond_ind}.
			Since $q$ was arbitrary, it follows that $\cC$ has deterministic marginal independence.
		
			The converse implication follows by the same line of argument upon noting that the above reasoning covers every instance of the universal property of the pullback in $\cC_\det$.
		\end{proof}
	
	\subsection{Causality and positivity}
		
		We now turn to another important information flow axiom: causality \cite[Definition~11.31]{fritz2019synthetic}.
		
		\begin{definition}[{\cite[Definition~11.31]{fritz2019synthetic}}]
			\label{defn:causal}
			A Markov category $\cC$ is \newterm{causal} if whenever $f \colon A\to W$, $g \colon W\to X$ and $h_1,h_2 \colon X\to Y$ satisfy
			\begin{equation}\label{eq:causal1}
				\tikzfig{causal1}
			\end{equation}
			then we also have the stronger equation
			\begin{equation}\label{eq:causal2}
				\tikzfig{causal2}
			\end{equation} 
		\end{definition}

		Intuitively, the axiom states that if a choice between $h_1$ and $h_2$ in the ``future'' of $g$ does not affect anything that happens from there on, then this choice cannot affect anything that happened in the ``past'' of $g$ either.
		
		To show that causality is a stronger property than the positivity axiom, it is helpful to have an alternative formulation thereof.
		The following definition elaborates on~\cite[Remark~11.36]{fritz2019synthetic}.
		
		\begin{definition}
			\label{def:pes}
			A Markov category $\cC$ has \newterm{parametrized equality strengthening} if for any $h_1,h_2 \colon X \to Y$ and any $p \colon A \to X$, 
			\begin{equation}\label{eq:eq_str1}
				\tikzfig{eq_str1}
			\end{equation}
			implies that for every dilation $\pi$ of $p$ with environment $E$, we have
			\begin{equation}\label{eq:eq_str2}
				\tikzfig{eq_str2}
			\end{equation}
		\end{definition}
		\Cref{def:pes} extends the notion of equality strengthening given by Cho and Jacobs \mbox{\cite[p.\ 19]{chojacobs2019strings}}, who considered the special case in which $p$ has trivial input, meaning that $A = I$.
		
		\begin{proposition}[{\cite[Remark~11.36]{fritz2019synthetic}}]\label{prop:pes_causality}
			For a Markov category $\cC$, the following are equivalent:
			\begin{enumerate}[label=(\roman*)]
				\item \label{it:causality} $\cC$ is causal.
				\item \label{it:PES} $\cC$ has parametrized equality strengthening.
			\end{enumerate} 
		\end{proposition}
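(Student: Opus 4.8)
The plan is to exhibit a tight correspondence between the data appearing in the causality axiom and dilations, under which the hypothesis \eqref{eq:causal1} becomes the almost-sure equality \eqref{eq:eq_str1} and the conclusion \eqref{eq:causal2} becomes the dilated equation \eqref{eq:eq_str2}. The key observation is that a pair $f \colon A \to W$, $g \colon W \to X$ as in \Cref{defn:causal} canonically produces a dilation of $p \coloneqq g \circ f$, and conversely that every dilation of a morphism $p$ arises from such a pair; in short, ``$f$ followed by $g$ with the intermediate value copied out'' and ``a dilation followed by its marginalization'' are two views of the same situation.

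For the implication \ref{it:PES} $\Rightarrow$ \ref{it:causality}, I would start from causality data $f, g, h_1, h_2$ satisfying \eqref{eq:causal1} and set $p \coloneqq g \circ f$ together with the dilation $\pi \coloneqq (g \tensor \id_W) \circ \cop_W \circ f \colon A \to X \tensor W$, whose environment is $E = W$; discarding $W$ recovers $g \circ f = p$ by the counit law, so $\pi$ is indeed a dilation of $p$. Because $g \circ f = p$, the causality hypothesis \eqref{eq:causal1} is literally the statement $h_1 \ase{p} h_2$, i.e.\ \eqref{eq:eq_str1}. Applying \ref{it:PES} to this particular $\pi$ then yields \eqref{eq:eq_str2}, and unwinding the definition of $\pi$ shows that this equation is exactly the causality conclusion \eqref{eq:causal2}: the retained environment $W$ is the retained ``past'' output of $f$, while the copied $X$-output of $\pi$ is the output of $g$ to which $h_i$ is applied. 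So nothing beyond a reordering of comonoid copies (cocommutativity of $\cop$) is needed in this direction.

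For the converse \ref{it:causality} $\Rightarrow$ \ref{it:PES}, I would take an arbitrary dilation $\pi \colon A \to X \tensor E$ of a morphism $p$ satisfying \eqref{eq:eq_str1}, and present it as causality data by setting $f \coloneqq \pi$ (so that $W = X \tensor E$) and letting $g \coloneqq \id_X \tensor \discard_E \colon X \tensor E \to X$ be the marginalization onto $X$. Then $g \circ f = p$ precisely because $\pi$ dilates $p$, so the hypothesis \eqref{eq:eq_str1} is once more exactly the causality hypothesis \eqref{eq:causal1}. Causality now gives \eqref{eq:causal2}, whose outputs retain both the full output $W = X \tensor E$ of $f$ and the separate $X$-output of $g$; discarding this latter, redundant copy of $X$ and simplifying with the copy--discard identity $(\id \tensor \discard) \circ \cop = \id$ collapses the expression to exactly \eqref{eq:eq_str2}.

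The only genuine work is the string-diagrammatic bookkeeping in this last step: one must verify that marginalizing the causality conclusion over the retained $X$-output of $g$, together with the comonoid identities (associativity and cocommutativity of $\cop$ and the counit law), reproduces the dilated equation \eqref{eq:eq_str2} on the nose. I expect this to be the main obstacle, though a routine one, since the entire conceptual content lies in the identification of the two descriptions of the same situation described above.
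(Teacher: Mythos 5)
Your second direction (causality $\Rightarrow$ PES) is essentially the paper's own argument and is correct: set $f \coloneqq \pi$, $W \coloneqq X \otimes E$, $g \coloneqq \id_X \otimes \discard_E$, apply causality, and marginalize. The only slip is that you must discard \emph{two} $X$ wires (the separate copy of $g$'s output \emph{and} the $X$ sitting inside the retained intermediate $W = X \otimes E$) before the counit law collapses the diagram to \eqref{eq:eq_str2}; discarding just one leaves an extra $X$ output. That is harmless, since the equation with the extra $X$ retained is stronger than \eqref{eq:eq_str2}.

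The first direction (PES $\Rightarrow$ causality) has a genuine gap, caused by a misreading of \eqref{eq:eq_str2}. The conclusion of parametrized equality strengthening does \emph{not} copy the $X$-output of $\pi$: it is the dilational equality $(h_1 \otimes \id_E) \circ \pi = (h_2 \otimes \id_E) \circ \pi$, in which $h_i$ consumes the entire $X$ wire. (This reading is forced, e.g., by the proof of \cref{prop:markov_dilation}, which recovers plain almost sure equality by substituting $\pi = \cop_X \circ p$, and by the proof of \cref{prop:causal_pos}, which applies the same conclusion with $h_i$ of type $X \otimes X \to X$.) Hence applying PES to your dilation $\pi \coloneqq (g \otimes \id_W) \circ \cop_W \circ f$, whose environment is only $W$, yields
\begin{equation*}
	\bigl((h_1 \circ g) \otimes \id_W\bigr) \circ \cop_W \circ f \;=\; \bigl((h_2 \circ g) \otimes \id_W\bigr) \circ \cop_W \circ f ,
\end{equation*}
an equation of morphisms $A \to Y \otimes W$. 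This is the marginalization of \eqref{eq:causal2} over the copied output of $g$ and is strictly weaker than \eqref{eq:causal2}, which asserts equality of joints over $Y \otimes X \otimes W$: since $g$ need not be deterministic, the copied $X$ wire carries correlations with $Y$ that the $(Y,W)$-joint does not determine. Concretely, in $\finstoch$ with $A = W = I$, $f = \id_I$, $g$ a fair coin on $X = \{0,1\}$, $h_1 = \id_X$ and $h_2$ the bit flip, your equation holds while the corresponding instance of \eqref{eq:causal2} fails, so the two equations are genuinely different and your claim that they coincide ``up to reordering of comonoid copies'' is false. The repair is exactly the paper's choice \eqref{eq:causal_to_pes}: enlarge the environment to $X \otimes W$ by also copying $g$'s output, i.e., use the dilation $\pi' \coloneqq \bigl((\cop_X \circ g) \otimes \id_W\bigr) \circ \cop_W \circ f \colon A \to X \otimes (X \otimes W)$ of $g \circ f$; then $(h_i \otimes \id_{X \otimes W}) \circ \pi'$ is literally the $i$-th side of \eqref{eq:causal2}, and PES concludes the proof. (Alternatively, keep your $\pi$ but apply PES to $h_i' \coloneqq (h_i \otimes \id_X) \circ \cop_X$, noting that almost sure equality of $h_1, h_2$ with respect to $g \circ f$ implies that of $h_1', h_2'$ by coassociativity of copying.)
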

		\begin{proof} The proof amounts to reinterpreting the terms in the respective equalities.
			\begin{itemize}
				\item[$\ref{it:causality}\Rightarrow\ref{it:PES}$:] Consider $h_1$, $h_2$, and $p$ satisfying \cref{eq:eq_str1} and an arbitrary dilation $\pi \colon A \to X \otimes E$ of $p$.
					If we define
					\begin{equation}\label{eq:pes_to_caus}
						\tikzfig{pes_to_caus}
					\end{equation}
					then \cref{eq:eq_str1} coincides with \eqref{eq:causal1} and applying causality gives 
					\begin{equation}\label{eq:pes_to_caus_2}
						\tikzfig{pes_to_caus_2}
					\end{equation}
					which, upon marginalizing the two $X$ outputs, gives the required \cref{eq:eq_str2}.
					
				\item[$\ref{it:PES}\Rightarrow\ref{it:causality}$:] Conversely, the morphism 
					\begin{equation}\label{eq:causal_to_pes}
						\tikzfig{causal_to_pes}
					\end{equation}
					is a particular dilation of $g \circ f \colon A \to X$ with environment $X \otimes W$, so that applying parametrized equality strengthening to \cref{eq:causal1} gives \cref{eq:causal2}. \qedhere
			\end{itemize}
		\end{proof}
		
		Let us use \Cref{prop:pes_causality} to characterize causality for Markov categories of semiring-valued stochastic matrices from \Cref{ex:semiring_valuations}.
		\begin{definition}
			An element $r$ of a semiring $R$ is said to have a \newterm{complement} $\overline{r} \in R$ if we have
			\begin{equation}
				r + \overline{r} = 1 .
			\end{equation}
		\end{definition}

		\begin{remark}
			\label{lattice_case}
			A complement need not be unique if it exists.
			For example, $R$ may satisfy $x + x = x$ and $x^2 = x$ for all $x \in R$, in which case $R$ is equivalently a bounded distributive lattice with join $+$ and meet $\cdot$, and with $0$ as bottom and $1$ as top element.
			In this case, $1$ is a complement for (and is complemented by) every other element.\footnote{Unfortunately our notion of complement does not coincide with the usual notion of lattice complement in this case, since we only require $r \lor \overline{r} = 1$ but not $r \land \overline{r} = 0$.}
		\end{remark}

		\begin{proposition}\label{prop:semiring_valuations_causality}
			Let $R$ be a commutative semiring.
			The Markov category $\cat{Kl}(D_R)$ is causal if and only if, for all $s,t,v,w \in R$ such that $s,t$ and $v+w$ have complements in $R$, we have the following implication:
			\begin{equation}\label{eq:weakly_zero_sum_free}
				s (v + w)  = t (v + w)   \quad \implies \quad  s v = t v  \;\text{ and }\; s w = t w
			\end{equation}
		\end{proposition}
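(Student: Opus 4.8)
The plan is to reduce the statement to \emph{parametrized equality strengthening} via \cref{prop:pes_causality}, and then to unfold the latter into explicit semiring arithmetic in $\Kl(D_R)$. Concretely, for kernels $h_1,h_2\colon X\to Y$, a morphism $p\colon A\to X$ and a dilation $\pi\colon A\to X\otimes E$ of it, almost sure equality $h_1 \ase{p} h_2$ reads, for all $a,x,y$,
\[
	p(x\mid a)\,h_1(y\mid x) = p(x\mid a)\,h_2(y\mid x),
\]
while the PES conclusion $h_1 \ase{\pi} h_2$ (with $E$ kept as side information) reads
\[
	\pi(x,e\mid a)\,h_1(y\mid x) = \pi(x,e\mid a)\,h_2(y\mid x)
\]
for all $a,x,e,y$. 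Fixing $a,x,y$ and abbreviating $s = h_1(y\mid x)$, $t = h_2(y\mid x)$ and $P = p(x\mid a) = \sum_e \pi(x,e\mid a)$, PES becomes exactly the assertion ``$sP = tP \Rightarrow s\,\pi(x,e\mid a) = t\,\pi(x,e\mid a)$ for each $e$'', i.e.\ that a product equation in $R$ survives de-summation of the right-hand factor. The arithmetic fact I will use repeatedly is that every entry of a $D_R$-distribution is complemented---its complement being the sum of the remaining entries---and hence so is every partial sum of such entries.

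For the direction ``semiring condition $\Rightarrow$ causal'', I assume the implication \eqref{eq:weakly_zero_sum_free} and verify PES. Fix $a,x,y$ and set $s,t,P$ as above; both $s$ and $t$ are complemented as entries of $h_1(\ph\mid x)$ and $h_2(\ph\mid x)$, and $P$ is complemented as an entry of $p(\ph\mid a)$. Since $\pi(x,\ph\mid a)$ has finite support, enumerate it as $e_1,\dots,e_n$ and write $v = \pi(x,e_1\mid a)$ and $w = \sum_{i\ge 2}\pi(x,e_i\mid a)$. The tail $w$ is again complemented, with complement $\pi(x,e_1\mid a) + \overline{P}$, so $s,t$ and $v+w = P$ satisfy the hypotheses of \eqref{eq:weakly_zero_sum_free}; applying it peels off the first piece, yielding $s\,\pi(x,e_1\mid a) = t\,\pi(x,e_1\mid a)$ together with $sw = tw$. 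An induction on $n$ then gives $s\,\pi(x,e_i\mid a) = t\,\pi(x,e_i\mid a)$ for every $i$, which is precisely the PES conclusion; hence $\Kl(D_R)$ has PES and is causal.

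For the converse ``causal $\Rightarrow$ semiring condition'', I take $s,t$ and $v+w$ complemented with $s(v+w) = t(v+w)$ and manufacture an instance of PES forcing $sv = tv$ and $sw = tw$. Work over $A = I$, $X = \{x_0,x_1\}$, $E = \{e_1,e_2\}$, and let $\pi\colon I\to X\otimes E$ be the joint distribution with $\pi(x_0,e_1)=v$, $\pi(x_0,e_2)=w$, $\pi(x_1,e_1)=\overline{v+w}$, $\pi(x_1,e_2)=0$; its $X$-marginal $p$ has $p(x_0)=v+w$, $p(x_1)=\overline{v+w}$, and the complement of $v+w$ is exactly what makes $\pi$ and $p$ normalized. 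Choosing $h_1,h_2\colon X\to Y$ to agree on $x_1$, almost sure equality $h_1\ase{p}h_2$ reduces to the $x_0$-condition, and its PES consequence evaluated at $(x_0,e_1)$ and $(x_0,e_2)$ returns $vs=vt$ and $ws=wt$ as soon as $h_1(\ph\mid x_0)$ and $h_2(\ph\mid x_0)$ place $s$ and $t$ in a common output. The delicate point---and the main obstacle---is to arrange these two distributions so that the $x_0$-almost-sure equality collapses to the single hypothesis $s(v+w)=t(v+w)$ rather than also imposing equations on the complementary mass. I resolve this by placing $s,t$ symmetrically: over outputs $\{y_0,y_1,y_2\}$ take $h_1(\ph\mid x_0) = (s,t,c)$ and $h_2(\ph\mid x_0) = (t,s,c)$ with a shared remaining mass $c$, so that each component equation of $h_1\ase{p}h_2$ is either $s(v+w)=t(v+w)$, its mirror, or trivial. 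Checking that such a normalized completion exists is precisely where the complement hypotheses on $s$ and $t$ are used; granting it, causality (as PES) delivers the required implication and closes the equivalence.
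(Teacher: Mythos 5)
Your componentwise unfolding of causality via parametrized equality strengthening is correct, and your forward direction (semiring condition $\Rightarrow$ PES) is sound. It is essentially the paper's argument, except that the paper avoids your induction entirely: for each fixed $e$ it applies \eqref{eq:weakly_zero_sum_free} once, with $v = \pi(x,e\mid a)$ and $w = \sum_{e' \neq e} \pi(x,e'\mid a)$, so that $v + w = p(x\mid a)$ and the conclusion $sv = tv$ is exactly the desired component equation; no peeling-off is needed.

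The backward direction, however, has a genuine gap, and it sits exactly at the step you deferred with ``granting it''. Your construction requires the columns $h_1(\ph\mid x_0) = (s,t,c)$ and $h_2(\ph\mid x_0) = (t,s,c)$ to be normalized, i.e.\ it requires some $c \in R$ with $s + t + c = 1$. This does \emph{not} follow from $s$ and $t$ each having a complement: already in $R = \R_+$ the elements $s = t = 1$ are complemented (by $0$), yet $s + t + c = 2 + c \neq 1$ for every $c \in \R_+$. Nor can such cases be dismissed as degenerate ones where the antecedent forces triviality: in the product semiring $\R_+ \times B$, where $B$ is the Boolean semiring $\{0,1\}$ with $1+1=1$, the elements $s = (1,1)$, $t = (1,0)$, $z = (1,0)$ are all complemented and satisfy $sz = tz$ with $s \neq t$ and $z \neq 0$, but $s + t = (2,1)$ admits no completion to $1 = (1,1)$. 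So the two-column symmetrization you rely on simply does not exist in general, and your proof offers no fallback; fitting $s$ and $t$ jointly into a single distribution is a strictly stronger requirement than each of them being complemented separately. For comparison, the paper completes the columns as $(s,\overline{s})$ and $(t,\overline{t})$, which always \emph{are} distributions, and then must contend with the extra component equation $\overline{s}z = \overline{t}z$ in the almost-sure-equality hypothesis---precisely the ``equations on the complementary mass'' you were trying to avoid---which it asserts follows from the antecedent. So your instinct that this is the delicate point of the converse is sound, but the symmetrization trick as stated does not close it.
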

		\begin{proof}
			First, let us show that for a semiring satisfying Implication \eqref{eq:weakly_zero_sum_free}, the corresponding Markov category $\cat{Kl}(D_R)$ has parametrized equality strengthening. 
			Writing out \Cref{eq:eq_str1} in components gives
			\begin{equation}\label{eq:eq_str1_comp}
				h_1(y | x) \, p(x | a) = h_2(y | x) \, p(x | a)
			\end{equation}
			which, for any dilation $\pi$ of $p$, reads
			\begin{equation}\label{eq:pas_sum}
				h_1(y | x) \left[ \sum_{e \in E} \pi(x,e|a) \right] = h_2(y | x) \left[ \sum_{e \in E} \pi(x,e|a) \right] .
			\end{equation}
			For any choice of $a \in A$, $e \in E$, $x \in X$, and $y \in Y$, let us define the following elements of $R$,
			\begin{align}
				\begin{aligned}
					s &\coloneqq h_1(y | x), \\[2pt]
					\overline{s} &\coloneqq \sum_{y' \neq y} h_1(y' | x),
				\end{aligned} &&
				\begin{aligned}
					t &\coloneqq h_2(y | x), \\[2pt]
					\overline{t} &\coloneqq \sum_{y' \neq y} h_2(y' | x),
				\end{aligned} &&
				\begin{aligned}
					v &\coloneqq \pi(x,e | a), \\[2pt]
					w &\coloneqq \sum_{e' \neq e} \pi(x,e' | a).
				\end{aligned} 
			\end{align}
			Then, by normalization of the respective morphisms, $s$ and $t$ have $\overline{s}$ and $\overline{t}$ as complements while $v+w$, being equal to $p(x | a)$, has a complement too.
			Moreover, \Cref{eq:pas_sum} takes the form of the antecedent of Implication \eqref{eq:weakly_zero_sum_free}.
			Using this implication then gives $s v = t v$, which reads
			\begin{equation}\label{eq:eq_str2_comp}
				h_1(y | x) \, \pi(x,e | a) = h_2(y | x) \, \pi(x,e | a).
			\end{equation}
			This is the componentwise form of \Cref{eq:eq_str2} that we aimed to show.
			
			Conversely, assume that $\cat{Kl}(D_R)$ has parametrized equality strengthening.
			Let $A = I$ be the singleton and let $X$, $Y$ and $E$ each have cardinality two.
			We choose morphisms $p$, $\pi$, $h_1$ and $h_2$ with types as above to be given by
			\begin{align}
				\begin{aligned}
					p &\coloneqq \begin{pmatrix}  z \\ \overline{z} \end{pmatrix} \\[6pt]
					\pi &\coloneqq \begin{pmatrix} v & w \\ \overline{z} & 0 \end{pmatrix}
				\end{aligned} &&
				\begin{aligned}
					h_1 &\coloneqq \begin{pmatrix}  s & 1 \\ \overline{s} & 0 \end{pmatrix} \\[6pt]
					h_2 &\coloneqq \begin{pmatrix}  t & 1 \\ \overline{t} & 0 \end{pmatrix} .
				\end{aligned} 
			\end{align}
			where $z \coloneqq v + w$.
			In the above matrix notation, $\pi$ ranges over $X$ in its rows and over $E$ in its columns, while $h_1$ and $h_2$ denote stochastic matrices $X \to Y$ with $X$ in columns and $Y$ in rows as usual.
			With these choices, \Cref{eq:eq_str1_comp} reads
			\begin{equation}
				\label{eq:sztz}
				\begin{pmatrix}  s z & \overline{s} z \\ \overline{z} & 0 \end{pmatrix} = \begin{pmatrix}  t z & \overline{t} z \\ \overline{z} & 0 \end{pmatrix},
			\end{equation}
			as an equality of joint distributions in $D_R(X \times Y)$ with $X$ on the rows and $Y$ on the columns.
			This equation follows from the assumed antecedent of Implication \eqref{eq:weakly_zero_sum_free}. 
			Applying parametrized equality strengthening to get \Cref{eq:eq_str2_comp}, we then obtain the requisite equations $sv = tv$ and $sw = tw$ by choosing the elements of $X$ and $Y$ that correspond to the upper left corner in \Cref{eq:sztz}.
		\end{proof}
		
		\begin{remark}[Causality implies positivity for semiring-valued kernels]
			For entire commutative semirings, \cref{prop:semiring_valuations_causality,prop:semiring_valuations_positivity} foreshadow \cref{prop:causal_pos}, which says that causality implies positivity for arbitrary Markov categories.
			Indeed, any commutative semiring satisfying Implication \eqref{eq:weakly_zero_sum_free} is also zerosumfree.
			To see this, let $s = 1$ and $t = 0$, both of which have complements.
			Since $v + w = 0$ implies that $v+w$ also has a complement, applying \eqref{eq:weakly_zero_sum_free} gives $v = 0$ and $w = 0$.
			Thus, $R$ is then zerosumfree and $\cat{Kl}(D_R)$ is a positive Markov category by \cref{prop:semiring_valuations_positivity}.
		\end{remark}
		\begin{remark}[Multiplicative cancellativity implies causality]
			Another consequence of \cref{prop:semiring_valuations_causality} is that if $R$ is a zerosumfree commutative semiring in which multiplication by nonzero elements can be cancelled, then the Markov category $\cat{Kl}(D_R)$ is causal.
			Indeed, if $v + w = 0$, then by zerosumfreeness we conclude $v = w = 0$, so that Implication \eqref{eq:weakly_zero_sum_free} is satisfied.
			Otherwise, the equation $s (v + w)  = t (v + w)$ gives us $s = t$ by cancelling $v + w$, and thus the implication also holds in this case.
			An interesting example where this applies would be the \emph{tropical semiring} $R = [-\infty, +\infty)$ with $\max$ as addition and $+$ as multiplication.
		\end{remark}
		As we show next, the conditions in \cref{prop:semiring_valuations_causality} are also satisfied when $R$ is a bounded distributive lattice (as considered in \Cref{lattice_case}).
		Later on, this will rule such semirings out as potential counterexamples showing that positivity does not imply causality, and we will need to consider more complicated semirings instead (\Cref{prop:quantale_ex_positive_causal}).

		\begin{proposition}\label{prop:lattice_causal}
			Let $R$ be a bounded, distributive lattice. 
			Then the Kleisli category $\cat{Kl}(D_R)$ is a causal Markov category.
		\end{proposition}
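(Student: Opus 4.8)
The plan is to reduce the claim to the purely algebraic implication \eqref{eq:weakly_zero_sum_free} by invoking \Cref{prop:semiring_valuations_causality}, and then to verify that implication directly in lattice-theoretic terms. A bounded distributive lattice $R$ is a commutative semiring with $+ = \vee$ (join), $\cdot = \wedge$ (meet), $0$ the bottom and $1$ the top element, so \Cref{prop:semiring_valuations_causality} applies and it suffices to check that for all $s,t,v,w \in R$ subject to the complement hypotheses,
\[
s \wedge (v \vee w) = t \wedge (v \vee w) \quad \Longrightarrow \quad s \wedge v = t \wedge v \ \text{ and }\ s \wedge w = t \wedge w .
\]
As recorded in \Cref{lattice_case}, every element of $R$ is complemented (by $1$), so the hypotheses that $s$, $t$, and $v+w$ have complements are vacuously satisfied, and I may take $s,t,v,w$ completely arbitrary.

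First I would meet both sides of the antecedent with $v$. Using the absorption law $(v \vee w) \wedge v = v$, the left-hand side collapses to $s \wedge v$ and the right-hand side to $t \wedge v$, giving $s \wedge v = t \wedge v$. Symmetrically, meeting both sides with $w$ and using $(v \vee w) \wedge w = w$ yields $s \wedge w = t \wedge w$. This establishes the consequent, hence \eqref{eq:weakly_zero_sum_free} holds and $\cat{Kl}(D_R)$ is causal.

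I do not expect any genuine obstacle here: the whole argument rests on the absorption identity, which holds in every lattice. Two points are worth flagging, though. The bookkeeping of the complement hypotheses in \eqref{eq:weakly_zero_sum_free} could look delicate, but \Cref{lattice_case} renders it trivial. More interestingly, distributivity is never actually used in the final computation; it is needed only to ensure that $(R, \vee, \wedge)$ carries a semiring structure in the first place, so that \Cref{prop:semiring_valuations_causality} is available. Thus the substantive step is genuinely just the absorption law, and the distributivity hypothesis enters only through the framing as a Markov category of semiring-valued kernels.
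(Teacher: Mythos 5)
Your proof is correct, and it follows the same overall strategy as the paper: both reduce the claim to Implication \eqref{eq:weakly_zero_sum_free} via \Cref{prop:semiring_valuations_causality}, both note (as in \Cref{lattice_case}) that the complement hypotheses are automatic since $1$ complements every element, and both in fact establish the implication for completely arbitrary $s,t,v,w$. Where you differ is in how the lattice implication itself is verified. The paper argues order-theoretically: writing $z = v+w$, it uses the inequalities $a+b \geq a \geq ab$ and the antecedent $sz = tz$ to extract $s \geq tv$, $v \geq tv$, $t \geq sv$, $v \geq sv$, and then concludes $sv = tv$ from the universal property of the meet as greatest lower bound together with antisymmetry of $\geq$. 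You instead meet both sides of the antecedent with $v$ (resp.\ $w$) and collapse $(v \vee w) \wedge v$ to $v$ by absorption, which is shorter, purely equational, and makes the mechanism transparent in one step. Your closing observation is also apt and matches the paper's proof implicitly: neither verification uses distributivity, which is needed only so that $(R, \vee, \wedge)$ is a commutative semiring and \Cref{prop:semiring_valuations_causality} applies at all. In short, your argument is a leaner rendering of the same proof, trading the paper's Hasse-diagram bookkeeping for a single application of the absorption law.
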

		\begin{proof}
			Let us denote the underlying lattice ordering by $\geq$.
			We will show that for arbitrary elements $s,t,v,w$ of $R$, Implication \eqref{eq:weakly_zero_sum_free} holds, from which causality follows by \Cref{prop:semiring_valuations_causality}.
			By the antecedent of \eqref{eq:weakly_zero_sum_free} and the fact that in a lattice we always have $a+b \geq a \geq ab$ and $a+b \geq b \geq ab$, we can infer the order relations depicted in the following Hasse diagram, where $z \coloneqq v + w$:
			\begin{equation}\label{eq:lattice_causal}
				\tikzfig{lattice_causal}
			\end{equation}
			Using transitivity of $\geq$, we extract relations
			\begin{align}
				s &\geq tv,  &  v &\geq tv,  &  t &\geq sv,  &  v &\geq sv.
			\end{align}
			Since $sv$ is the greatest lower bound of $\{s,v\}$, the first two imply $sv \geq tv$,
			and by similar reasoning the latter two imply $tv \geq sv$. 
			Since $\geq$ is antisymmetric, we finally get $sv = tv$.
			Analogously, one can obtain the other equation $sw = tw$.
			By \Cref{prop:semiring_valuations_causality}, $\cat{Kl}(D_R)$ is thus a causal Markov category.
		\end{proof}
		
		Returning to the general theory, we now prove a new and surprising implication from causality to positivity.
		
		\begin{theorem}\label{prop:causal_pos}
			If a Markov category $\cC$ is causal, then $\cC$ is positive. The converse is false.
		\end{theorem}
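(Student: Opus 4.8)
The plan for the forward direction is to route through the two equivalences that are already in place: causality coincides with parametrized equality strengthening (\Cref{prop:pes_causality}), and positivity coincides with deterministic marginal independence (\Cref{thm:eqpos}). It therefore suffices to prove that parametrized equality strengthening implies DMI. So I would assume the former, fix a deterministic $p \colon A \to X$ and a dilation $\pi \colon A \to X \otimes E$ of $p$, and aim for the conditional independence \eqref{eq:cond_ind}.

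The key observation is that determinism of $p$ makes the premise \eqref{eq:eq_str1} of equality strengthening cheap to satisfy. For a deterministic $p$, two parallel morphisms out of $X$ are $p$-almost surely equal in the sense of \Cref{def:ase} exactly when they become equal after precomposition with $p$: since copying the output of a deterministic map merely duplicates it without creating correlations, the witnessing copy of $X$ adds no constraint. I would exploit this to feed a suitable tautological $p$-almost-sure equality into parametrized equality strengthening and then read off, from the strengthened equation \eqref{eq:eq_str2}, that the $X$-output of $\pi$ carries no information about $E$ beyond what $p$ already determines; discarding the auxiliary wire yields precisely the factorization asserted by DMI. Concretely, I expect the cleanest path to pass through \emph{relative positivity} (\Cref{sec:relativized}): causality inherently produces statements of the ``the two versions agree up to almost-sure equality'' shape, which is exactly the relativized form of positivity, and one then descends to ordinary positivity via \Cref{rem:relpos_pos}.

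I expect the genuine obstacle here to be diagrammatic bookkeeping rather than conceptual novelty. One must choose the auxiliary test morphisms and wire together the copies of $X$, the environment $E$, and the outputs so that the conclusion of equality strengthening becomes literally \eqref{eq:cond_ind} after the superfluous wires are discarded; and one must invoke determinism of $p$ at exactly the point where an almost-sure equality has to be upgraded to a genuine equality. Making these manipulations line up cleanly is where the real work lies, and it is also where the asymmetry between the two axioms (causality strengthens along \emph{any} dilation, positivity only concerns deterministic marginals) has to be used carefully.

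For the converse --- that positivity does \emph{not} imply causality --- I would exhibit a Markov category of semiring-valued kernels that is positive but not causal. By \Cref{prop:semiring_valuations_positivity}, the category $\cat{Kl}(D_R)$ for an entire commutative semiring $R$ is positive if and only if $R$ is zerosumfree, whereas by \Cref{prop:semiring_valuations_causality} it is causal if and only if $R$ satisfies the cancellation-type Implication \eqref{eq:weakly_zero_sum_free}. Since \eqref{eq:weakly_zero_sum_free} is strictly stronger than zerosumfreeness, it suffices to construct an entire commutative semiring that is zerosumfree yet violates \eqref{eq:weakly_zero_sum_free}; this is carried out in \Cref{prop:quantale_ex_positive_causal} with a carefully crafted semiring. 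I expect the delicate part of the converse to be precisely the construction of such a semiring: one in which sums can only vanish trivially, but where some equation $s(v+w) = t(v+w)$ nonetheless fails to force $sv = tv$, so that the ``future'' equality does not propagate to the ``past'' as causality would demand.
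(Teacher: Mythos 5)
Your treatment of the converse matches the paper: it, too, reduces to the semiring criteria of \Cref{prop:semiring_valuations_positivity,prop:semiring_valuations_causality} and invokes the ideal semiring $\mathcal{I}(\Z[2i])$ constructed in \Cref{prop:quantale_ex_positive_causal}. Your scaffolding for the forward direction is also sound and close in spirit to the paper's: the paper likewise runs causality through parametrized equality strengthening (\Cref{prop:pes_causality}); it targets the positivity equation \eqref{positivity} directly, whereas you target DMI and appeal to \Cref{thm:eqpos}, and either target would do.

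The genuine gap is that the one step carrying all the content is left as a placeholder: you never exhibit the instance of equality strengthening to be applied, and the instance your sketch describes---base morphism the deterministic $p$ itself, dilation $\pi$, premise trivialized by your observation that $h_1 \ase{p} h_2$ reduces to $h_1 \circ p = h_2 \circ p$---does not deliver DMI. Every conclusion that PES can produce from base $p$ has the shape ``$h_1$ and $h_2$ act equally on (a copy of) the $X$-output of a single dilation $\rho$ of $p$, jointly with its environment'': both sides of such an equation factor through $\rho$ and differ only in post-processing. But the right-hand side of \eqref{eq:cond_ind}, namely $(p \otimes \pi_E)\circ\cop_A$, uses the input $A$ \emph{twice}---an independent fresh draw of $p$ placed next to $\pi_E$---and is not a post-processing of any single dilation of $p$; your proposal gives no way to bridge this mismatch, and your determinism observation, while correct, is a red herring here. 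The missing idea, which is the heart of the paper's proof, is a doubling trick: apply PES not to $p$ but to $\cop_X \circ p \colon A \to X \otimes X$. Determinism enters exactly at this point, to identify $\cop_X \circ p = (p \otimes p)\circ\cop_A$, which makes
\[
	\Pi \coloneqq (p \otimes \pi)\circ\cop_A \colon A \longrightarrow (X \otimes X) \otimes E
\]
a dilation of $\cop_X \circ p$ with environment $E$. Now take $h_1 \coloneqq \discard_X \otimes \id_X$ and $h_2 \coloneqq \id_X \otimes \discard_X$, both of type $X \otimes X \to X$. The premise \eqref{eq:eq_str1} for this pair holds purely by coassociativity and cocommutativity of copying, and the conclusion \eqref{eq:eq_str2} applied to $\Pi$ reduces (up to discarding the redundant outputs and swapping) on one side to $\pi$ and on the other to $(p \otimes \pi_E)\circ\cop_A$, which is \eqref{eq:cond_ind} on the nose. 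The paper performs exactly this construction, phrased for a composite $g \circ f$ in place of $p$ so as to land on \eqref{positivity} directly. Finally, your suggested shortcut through relative positivity is circular: the paper obtains relative positivity from causality as a corollary of this very theorem (\Cref{cor:prop_causal_strictly_pos}), so it cannot serve as the engine of its proof.
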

		\begin{proof}
			Let $f \colon A \to W$ and $g \colon W \to X$ be such that $g \circ f$ is deterministic. 
			Define $p$ to be the morphism
			\begin{equation}\label{eq:caus_to_pos_1}
				\tikzfig{caus_to_pos_1}
			\end{equation}
			where the two forms of $p$ are equal by the assumption that $g \circ f$ is deterministic.
			Then there is a dilation $\pi$ of $p$, with environment $W$, given by
			\begin{equation}\label{eq:caus_to_pos_2}
				\tikzfig{caus_to_pos_2}
			\end{equation}
			Note that the following equation
			\begin{equation}\label{eq:caus_to_pos_4}
				\tikzfig{caus_to_pos_4}
			\end{equation}
			holds by the associativity of copying, where we identify $h_1$ as ${\discard_X} \otimes {\id_X}$ and $h_2$ as ${\id_X} \otimes {\discard_X}$.
			
			Since causality is equivalent to parametrized equality strengthening (\cref{prop:pes_causality}), we can apply the latter to \cref{eq:caus_to_pos_4}.
			Replacing $p$ (with its outputs copied) by the dilation $\pi$ from \eqref{eq:caus_to_pos_2} yields
			\begin{equation}\label{eq:caus_to_pos_5}
				\tikzfig{caus_to_pos_5_2}
			\end{equation}
			which is the desired positivity equation up to swapping the outputs (see \cref{def:positivity}).

			The fact that positivity does not imply causality in general is shown by constructing a Markov category that is positive but not causal, which we do next in \Cref{prop:quantale_ex_positive_causal}.
		\end{proof}
		
		In particular, we take inspiration from \Cref{prop:semiring_valuations_positivity,prop:semiring_valuations_causality} and look for an entire commutative semiring that is zerosumfree, but does not satisfy Implication \eqref{eq:weakly_zero_sum_free}.
		By \Cref{prop:lattice_causal}, we know that such a semiring cannot be a distributive lattice.
		Rather, let $\mathcal{I}(\Z[2i])$ be the commutative quantale\footnote{Recall that a \newterm{quantale} is a semiring where addition is given by the join of a complete join-semilattice, and such that multiplication distributes over arbitrary joins. The lattice of ideals of a commutative ring is a commutative quantale under ideal multiplication~\cite[Chapter~4]{rosenthal1990quantales}.} of ideals of the commutative ring
		\begin{equation}
			\Z[2i] = \Z \oplus 2i \Z = \Set*[\big]{  m \oplus 2i k \given m,k \in \Z },
		\end{equation}
		which is the subring of the Gaussian integers whose imaginary part is even.
		The addition and multiplication in $\mathcal{I}(\Z[2i])$ are the ideal addition and multiplication, respectively, with units given by the null ideal $\{0\}$ the whole ring $\Z[2i]$, respectively.
		
		\begin{proposition}\label{prop:quantale_ex_positive_causal}
			Let $R$ be the semiring of ideals $\mathcal{I}(\Z[2i])$ defined above.
			Then:
			\begin{enumerate}
				\item $R$ is entire and zerosumfree.\footnote{As the proof will show, this holds with any integral domain in place of $\Z[2i]$.}
				\item The Markov category $\cat{Kl}(D_R)$ is representable and positive.
				\item The Markov category $\cat{Kl}(D_R)$ is not causal.
			\end{enumerate} 
		\end{proposition}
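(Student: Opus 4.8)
The plan is to dispatch items (1) and (2) by appealing to the characterizations established earlier and to concentrate the real work on item (3), the failure of causality. For (1), entireness is immediate: since $\Z[2i]$ is a subring of $\mathbb{C}$ it is an integral domain, so given nonzero ideals $I,J$ one may pick nonzero $a\in I$, $b\in J$ and note $0\neq ab\in IJ$, whence $IJ\neq\{0\}$; together with $\{0\}\neq\Z[2i]$ this gives $0\neq 1$. Zerosumfreeness is even more elementary: for ideals one always has $I\subseteq I+J$ (as $0\in J$), so $I+J=\{0\}$ forces $I=J=\{0\}$. (Both arguments use only that $\Z[2i]$ is a domain, as the footnote claims.) For (2), representability is automatic: by \Cref{lem:pure_is_deterministic} the entireness of $R$ yields $\cat{Kl}(D_R)_\det=\Setcat$, and the inclusion $\Setcat\hookrightarrow\cat{Kl}(D_R)$ is the left adjoint of the Kleisli adjunction, whose right adjoint supplies the required functor $P$; positivity then follows from \Cref{prop:semiring_valuations_positivity} and the zerosumfreeness from (1).

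The crux is item (3). By \Cref{prop:semiring_valuations_causality} it suffices to violate Implication \eqref{eq:weakly_zero_sum_free}. First I would observe that the complement hypotheses there are vacuous in $R$: the multiplicative unit $1=\Z[2i]$ is the top element of the ideal lattice, and since addition is join we have $I+1=1$ for every ideal $I$, so every element of $R$ is complemented (by $1$). Hence it is enough to produce ideals $s,t,v,w$ with $s(v+w)=t(v+w)$ but $sv\neq tv$. This is exactly a failure of \emph{cancellation} of ideal multiplication, which is impossible in a Dedekind domain; I must therefore exploit that $\Z[2i]$ is a non-maximal order (conductor $2$) and hence not integrally closed.

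Concretely, I would take the non-invertible conductor ideal $M\coloneqq(2,2i)=2\Z[i]$ and set
\[
	s\coloneqq M,\qquad t\coloneqq(2),\qquad v\coloneqq(2),\qquad w\coloneqq(2i),
\]
so that $v+w=M$. A short computation with generators gives $M^2=(2)\,M=4\Z[i]$, whence $s(v+w)=t(v+w)$; but $sv=M\,(2)=4\Z[i]$ while $tv=(2)(2)=4\Z[2i]$, and these differ because $4i\in 4\Z[i]\smallsetminus 4\Z[2i]$. This contradicts \eqref{eq:weakly_zero_sum_free}, so $\cat{Kl}(D_R)$ is not causal.

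The main obstacle is item (3): one has to recognize that what is needed is precisely a counterexample to ideal cancellation, locate it inside the non-maximal order $\Z[2i]$ (the natural source being the conductor $M$, for which $M^2=(2)M$ although $M\neq(2)$), and --- crucially --- split the absorbing factor as $v+w$ so that the equalized product $s(v+w)=t(v+w)$ nonetheless separates upon restricting to the summand $v=(2)$. Verifying the three ideal identities $M^2=(2)M$, $M(2)=4\Z[i]$ and $(2)(2)=4\Z[2i]$ is the only genuine computation, and it is routine once the ideals are written as $\Z$-lattices in $\mathbb{C}$.
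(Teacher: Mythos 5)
Your proposal is correct and takes essentially the same approach as the paper: items (1) and (2) via the same arguments and reductions, and item (3) by using \cref{prop:semiring_valuations_causality}, noting that every ideal is complemented by the top element $\Z[2i]$, decomposing the conductor ideal $M = (2,2i)$ as $v+w = (2)+(2i)$, and exhibiting a failure of cancellation by $M$. The only (immaterial) difference is the choice of witnesses: the paper takes $s=v=(2)$, $t=w=(2i)$, so the key identity is $(2)M=(2i)M$ while $(2)^2\neq(2)(2i)$, whereas you take $s=M$, $t=v=(2)$, $w=(2i)$, so the key identity is $M^2=(2)M$ while $M(2)\neq(2)^2$; both computations are valid.
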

		\begin{proof}
			\begin{enumerate}
				\item $R$ is non-trivial as we have $\{0\} \neq \Z[2i]$.
					To show that it is entire, we thus need to prove that is has no zero divisors.
					Consider two non-zero ideals $I,J \subseteq \Z[2i]$ with $IJ = \{0\}$.
					The latter is equivalent to $\alpha \beta = 0$ for all $\alpha \in I$ and $\beta \in J$.
					Since $\Z[2i]$ itself is entire, this implies $I = J = \{0\}$.

					To show that $R$ is zerosumfree, consider two ideals $I$ and $J$ that sum to zero, i.e.\ $I + J = \{0\}$.
					Since the sum of ideals contains each of them, the desired $I = J = \{0\}$ is immediate.
				\item Representability follows by \cite[Proposition 3.6]{fritz2023representable} and the previous item.
					Positivity follows by \Cref{prop:semiring_valuations_positivity} and the previous item.
				\item Let us show that Implication \eqref{eq:weakly_zero_sum_free} fails here.
					To this end, we choose ideals
					\begin{align}
						s &= v \coloneqq (2, 4i),  &  t &= w \coloneqq (4, 2i),
					\end{align}
					where $(m,2ik)$ denotes the set $m \mathbb{Z} \oplus 2ik \mathbb{Z} \subseteq \Z[2i]$.
					Note that $s$ and $t$ are the principal ideals generated by $2$ and $2i$ respectively.
					We use the $(m,2ik)$ notation to highlight that these are distinct ideals in $\Z[2i]$, which would not be the case for the Gaussian integers $\Z[i]$.
					Since every element of $R$ has a complement given by $\Z[2i]$ itself, $s$, $t$, and $(v+w)$ do as well.
					We can now compute
					\begin{align}
						s^2 &= t^2 = (4, 8i),  &  st &= (8,4i),
					\end{align}
					so that we have 
					\begin{equation}
						s (v + w) = s^2 + st = st + t^2 = t (v + w),
					\end{equation}
					but nevertheless
					\begin{equation}
						s v = s^2 = (4, 8i) \neq (8, 4i) = s t = t v.
					\end{equation}
					These computations show that Implication \eqref{eq:weakly_zero_sum_free} fails and thus, by \Cref{prop:semiring_valuations_causality}, that $\cat{Kl}(D_R)$ is not a causal Markov category.
					\qedhere
			\end{enumerate}
		\end{proof}
		
		\begin{question}
			Is there a characterization of when a representable Markov category is causal, analogous to \Cref{rep_pos}?
		\end{question}
	
	\subsection{Information flow almost surely}\label{sec:relativized}
	
		As a brief aside, let us turn to the notion of strict positivity introduced in \cite[Definition 13.16]{fritz2019synthetic}.
		As discussed in \Cref{fn:strict_pos}, the name strict positivity appears unfortunately chosen in retrospect.
		We thus refer to it as relative positivity in this article.
		It corresponds to  the positivity axiom (\cref{def:positivity}) with both antecedent and consequent relativized to $p$-almost sure equality.
		That is, we say that $\cC$ is a \newterm{relatively positive} Markov category if for all morphisms $f$, $g$, and $p$ of suitable types, we have
		\begin{equation}\label{eq:strictpositivity}
			\tikzfig{strictpositivity}
		\end{equation}

		\begin{remark}
			\label{rem:relpos_pos}
			Relative positivity implies ordinary positivity by choosing $p=\id$.  
		\end{remark}
		
		In fact, one can formulate similar relative versions of other information flow axioms, such as relative deterministic marginal independence, relative causality, etc. 
		By replacing equalities with \as{$p$} equalities in the respective proofs, implications in \cref{fig:implications} remain valid also between the relative versions of the axioms, e.g.\ relative causality implies relative positivity by \cref{prop:causal_pos}.
		
		An interesting observation is that the causality axiom is equivalent to its relativized version.
		That is, the implication
		\begin{equation}\label{causalrel}
			\tikzfig{causalrel}
		\end{equation}
		can be derived by two applications of the causality axiom itself. 
		This lets us show the following strengthening of \Cref{prop:causal_pos}.
		\begin{corollary}\label{cor:prop_causal_strictly_pos}
			If $\cC$ is causal, then $\cC$ is relatively positive.
		\end{corollary}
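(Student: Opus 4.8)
The statement strengthens \Cref{prop:causal_pos} from positivity to relative positivity, and since relative positivity is strictly stronger (\Cref{rem:relpos_pos}), one cannot simply invoke \Cref{prop:causal_pos}. Instead the plan is to \emph{relativize} the proof of \Cref{prop:causal_pos} verbatim. This rests on the two facts already flagged in the surrounding discussion: that causality coincides with its relativized version via the implication \eqref{causalrel} (derivable by two applications of the causality axiom), and that the argument of \Cref{prop:causal_pos} survives replacing every equality by the corresponding $p$-almost sure equality. The overall route is therefore: causal $\Leftrightarrow$ relatively causal $\Rightarrow$ relatively positive.

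Concretely, I would proceed in two steps. First, I would upgrade \Cref{prop:pes_causality} to its relative form: running that equivalence with $\ase{p}$ in place of $=$ throughout, and feeding in relative causality (i.e.\ \eqref{causalrel}), yields a \emph{relative} parametrized equality strengthening --- an antecedent of the shape \eqref{eq:eq_str1} that holds only $\ase{p}$ still forces the strengthened equality \eqref{eq:eq_str2} to hold $\ase{p}$ for every dilation $\pi$. Second, I would rerun the proof of \Cref{prop:causal_pos} with this relative tool. Given $f$, $g$ and a reference morphism $p$ with $g \circ f$ deterministic only $\ase{p}$, I would form the auxiliary morphism \eqref{eq:caus_to_pos_1} and its dilation \eqref{eq:caus_to_pos_2} exactly as before, now observing that the two forms of the morphism in \eqref{eq:caus_to_pos_1} agree $\ase{p}$ rather than on the nose; the purely diagrammatic identity \eqref{eq:caus_to_pos_4} (associativity of copying) is untouched; and applying relative parametrized equality strengthening in place of the ordinary one delivers the positivity equation $\ase{p}$, which is precisely the relative positivity equation \eqref{eq:strictpositivity}.

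The one genuinely delicate point --- and the step I expect to require care rather than new ideas --- is the bookkeeping of the reference morphism. In relative positivity the almost-sure qualifier is taken with respect to a single morphism $p$ on the common input, and I must verify that this same $p$ controls every equality invoked along the way: that ``$g\circ f$ is deterministic $\ase{p}$'' is exactly what validates the $\ase{p}$ version of \eqref{eq:caus_to_pos_1}, and that the relative equality strengthening from the first step is applied with the matching reference so that its output is again an $\ase{p}$ equality and not an equality relative to some other morphism. Once the reference is threaded consistently through the construction, nothing else in the proof of \Cref{prop:causal_pos} uses exact equality in an essential way, so the relativization goes through mechanically and the corollary follows.
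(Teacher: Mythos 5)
Your proposal is correct and follows essentially the same route as the paper, whose proof is precisely the chain ``causality $\iff$ relative causality $\implies$ relative positivity'', with the first equivalence coming from \eqref{causalrel} and the second from relativizing the proof of \Cref{prop:causal_pos} by replacing equalities with \as{$p$} equalities throughout. Your additional care about threading the reference morphism $p$ consistently (via a relative form of \Cref{prop:pes_causality}) is exactly the bookkeeping the paper leaves implicit.
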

		\begin{proof}
			By the arguments presented in this section, we have
			\begin{equation*}
				\text{causality} \iff \text{relative causality} \implies \text{relative positivity}. \qedhere
			\end{equation*}
		\end{proof}

		It is interesting to note that the first implication cannot be generalized to quantum Markov categories due to~\cite[Example~8.28 and Proposition~8.34]{parzygnat2020inverses}.\footnote{We thank Arthur Parzygnat for pointing this out to us.}

\section{Quasi-Borel spaces, the privacy equation, and failure of positivity}
\label{sec:qbs}

	So far, counterexamples to positivity arose in settings constructed for this purpose, such as when we deal with negative probabilities. 
	The purpose of this section is to present \emph{probability theory on function spaces} as a naturally occurring situation in which positivity is violated. 
	The failure of positivity here is not a bug but a \emph{feature}, because it conforms to intuitions about information-hiding and privacy. 
	This failure of positivity is not rooted in the existence of negative probabilities like in $\finstoch_\pm$. Rather, information hiding and a form of destructive interference are central to understanding the failure of positivity in this context.

	In short, if $X \sim \nu$ is a random variable sampled from an atomless distribution $\nu$ such as a Gaussian distribution, then we can form the singleton set $A = \{X\}$, which is now a random subset of the real line. 
	As we will show, $A$ is equal in distribution to the empty set, i.e.\ its law is $\delta_\emptyset$.
	This means that the distribution of the random pair $(A,X)$ is a state which violates deterministic marginal independence. 
	Indeed its first marginal is deterministic (with value $\emptyset$), but $A$ and $X$ are \emph{not} independent; this can be seen for example because $A \ni X$ holds with probability 1.

	In order to make this counterexample precise, we first need to introduce a Markov category capable of expressing random subsets of the real line. 
	This is not possible in $\stoch$ or $\borelstoch$ because the category of (standard Borel) measurable spaces is not cartesian closed \cite{aumann1961function}. 
	\begin{itemize}
		\item In \Cref{sec:def_qbs}, we recall quasi-Borel spaces, which are a model for probabilistic programming with higher-order functions and are thus capable of formalizing our example. 
		
		\item In \Cref{sec:privacy}, we formally define the random singleton distribution as a measure on $2^\R$ and show that it equals $\delta_\emptyset$. 
			We then obtain a counterexample to deterministic marginal independence (\Cref{proposition:qbs_no_dmp}). 
			
		\item In \Cref{sec:fresh_names}, we remark that the situation in quasi-Borel spaces has strong connections to fresh name generation in computer science~\cite{sabok2021semantics}. 
			While we do not focus on the models, this matches up well with our analysis of information flow and information leaking and defines another source of interesting Markov categories.
	\end{itemize}

	\subsection{Quasi-Borel spaces}\label{sec:def_qbs}
	
		Quasi-Borel spaces have been introduced in \cite{heunen2017convenient} as a conservative extension of the category of measurable maps between standard Borel spaces to a cartesian closed category $\qbs$. 
		This means that one can form function spaces such as $2^\R$, the space of all Borel subsets of $\R$, and consider probability distributions on such objects. 
		Quasi-Borel spaces also feature a probability monad $P$ which is commutative, affine and agrees with the Giry monad on standard Borel spaces. 
		We denote the Markov category obtained as the Kleisli category of $P$ by $\qbstoch$. 
		It serves as an interesting source of counterexamples to information flow axioms, as it can ``hide'' information flow into objects like $2^\R$.
		
		A \newterm{quasi-Borel space} is a pair $(X,M_X)$, where $X$ is a set and $M_X \subseteq X^\R$ is a collection of functions $\R \to X$, called \newterm{random elements}, satisfying certain closure properties \cite{heunen2017convenient}, such as including all constant maps.
		A morphism of quasi-Borel spaces $(X,M_X) \to (Y,M_Y)$ is a function $f \colon X \to Y$ that preserves random elements. 
		We consider the following quasi-Borel spaces of interest:
		\begin{itemize}
			\item The real line is the quasi-Borel space $\R$ whose random elements are the Borel measurable maps $\R \to \R$.
			
			\item The Booleans form the quasi-Borel space $2$ with two elements, whose random elements are the Borel measurable maps $\R \to 2$, i.e.\ Borel subsets of $\R$.
			
			\item The exponential $2^\R$ in $\qbs$ consists of all Borel measurable maps $\R \to 2$.
				Its random elements $\R \to 2^\R$ are precisely the exponential transposes of Borel measurable maps of type $\R \times \R \to 2$. 
				The evaluation map 
				\begin{equation}\label{eq:evaluation}
					\mathsf{ev} \colon 2^\R \times \R \to 2, \qquad (A,x) \mapsto \llbracket x \in A \rrbracket
				\end{equation}
			is a morphism of quasi-Borel spaces, where $\llbracket x \in A \rrbracket$ stands for the truth value of the proposition $x \in A$.
		\end{itemize}
		
		Every quasi-Borel space $(X,M_X)$ has an induced $\sigma$-algebra $\Sigma_{M_X}$ given by the largest $\sigma$-algebra which makes all random elements measurable. 
		Equivalently, a subset $A \subseteq X$ is measurable if and only if its characteristic function is a morphism of quasi-Borel spaces of type $(X,M_X) \to 2$.
		
		Random elements $\R \to X$ allow one to push a source of randomness from the quasi-Borel space $\R$ onto the quasi-Borel space $(X,\Sigma_{M_X})$.
		In this spirit, a \newterm{probability measure} on $(X,M_X)$ is a probability measure on the induced measurable space $(X,\Sigma_{M_X})$ which can be obtained as a pushforward $\alpha_*\mu$, where $\alpha \in M_X$ is a random element and $\mu \in P(\R)$ is an ordinary probability measure on $\R$.
		
		\begin{example}
			The Dirac measure $\delta_x$ on $(X,\Sigma_{M_X})$ is a valid probability measure on the quasi-Borel space $(X,M_X)$, because it can be written as a pushforward of \emph{any} probability measure on $\R$ by the constant random element with image $\{x\}$.
		\end{example}
		
		The probability monad $P$ on $\qbs$ assigns, to every quasi-Borel space $(X,M_X)$, the set $P(X)$ of all probability measures on $X$ endowed with a suitable quasi-Borel structure. 
		The unit $\eta_X \colon X \to P(X)$ of the monad is given by the Dirac measure. 
		The monad $P$ is affine and commutative \cite{heunen2017convenient}, so that its Kleisli category $\qbstoch$ is a Markov category.
		
	\subsection{Random singleton sets}\label{sec:privacy}
		
		We can now take advantage of the cartesian closure of quasi-Borel spaces to define random singleton sets. 
		There is a morphism 
		\begin{equation}
			\{ \ph \} \colon \R \to 2^\R
		\end{equation}
		which sends a number $x \in \R$ to the singleton set $\{x\} \in 2^\R$. 
		Note that this morphism is simply the exponential transpose of the equality test $(=) \colon \R \times \R \to 2$. 
		If $\nu \in P(\R)$ is a probability measure on the real line, then we can describe the distribution of a \emph{random singleton set} by \[ \mathcal{RS}_\nu = P(\{ \ph \})(\nu) \in P(2^\R). \]
		Recall that by definition of distributions on a quasi-Borel space, $\mathcal{RS}_\nu$ is the pushforward measure $\{\ph\}_*\nu$ on the induced measurable space $(2^\R, \Sigma_{M_{2^\R}})$, defined by
		\begin{equation} \label{eq:def_rs}
			\mathcal{RS}_\nu(\mathcal U) = \nu \bigl( \Set{ x \in \R \given \{x\} \in \mathcal U } \bigr) \text{ for all } \mathcal U \in \Sigma_{M_{2^\R}} 
		\end{equation}
		Using the $\sigma$-algebra $\Sigma_{M_{2^\R}}$ is crucial{\,\textemdash\,}it ensures, for example, that the set $\Set{ x \in \R \given \{x\} \in \mathcal U }$ in \eqref{eq:def_rs} is measurable. 
		We elaborate on this further in the proof of the next theorem.
		
		\begin{theorem}[{Privacy equation \cite{sabok2021semantics}}]\label{thm:privacy}
			For every atomless probability measure $\nu$, the random singleton is equal in distribution to the empty set, i.e.
			\begin{equation}
				\mathcal{RS}_\nu = \delta_\emptyset.
			\end{equation}
		\end{theorem}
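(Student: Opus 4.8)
The plan is to unfold the definition of equality of measures on the induced measurable space and reduce the statement to a single measure-theoretic fact about how the function-space $\sigma$-algebra $\Sigma_{M_{2^\R}}$ interacts with atomlessness. Since $\mathcal{RS}_\nu = \{\ph\}_*\nu$ by construction, and two probability measures on $(2^\R,\Sigma_{M_{2^\R}})$ coincide precisely when they agree on every measurable set, it suffices to prove that for every $\mathcal U\in\Sigma_{M_{2^\R}}$ one has $\nu(S_{\mathcal U}) = \llbracket \emptyset\in\mathcal U\rrbracket$, where $S_{\mathcal U}\coloneqq\Set{x\in\R \given \{x\}\in\mathcal U}$. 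First I would observe that $S_{\mathcal U}$ is genuinely Borel: the singleton map $\{\ph\}\colon\R\to 2^\R$ is the exponential transpose of the Borel equality test, hence a random element, so $S_{\mathcal U}$ is its preimage of a measurable set. Passing to complements sends $\mathcal U$ to $\mathcal U^c$ with $S_{\mathcal U^c}=\R\setminus S_{\mathcal U}$ and exchanges the two cases $\emptyset\in\mathcal U$ and $\emptyset\notin\mathcal U$, so it is enough to treat $\emptyset\notin\mathcal U$ and show $\nu(S_{\mathcal U}) = 0$.

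Equivalently, writing $D_{\mathcal U}$ for the Borel set of points $x$ at which the predicate $\chi_{\mathcal U}$ distinguishes $\{x\}$ from $\emptyset$ (so $D_{\mathcal U}=S_{\mathcal U}$ when $\emptyset\notin\mathcal U$ and $D_{\mathcal U}=\R\setminus S_{\mathcal U}$ otherwise), the goal becomes to show $\nu(D_{\mathcal U}) = 0$ for every atomless $\nu$. The guiding intuition is the freshness principle behind name generation: a point $x$ drawn from an atomless law is almost surely \emph{fresh}, i.e.\ it avoids any fixed information that a measurable test could already reference, so no measurable predicate should be able to tell the freshly generated singleton $\{x\}$ apart from the empty set. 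As a sanity check, evaluation predicates behave exactly this way: for $\mathsf{ev}_{x_0}\colon A\mapsto\llbracket x_0\in A\rrbracket$ the distinguishing set is the single point $\{x_0\}$, which is $\nu$-null by atomlessness, and any countable family of evaluations only constrains a $\nu$-null set of locations. The content of the theorem is that this persists for \emph{all} measurable predicates, not merely evaluations.

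The heart of the argument, and the step I expect to be the main obstacle, is controlling the very large $\sigma$-algebra $\Sigma_{M_{2^\R}}$. Measurability of $\mathcal U$ means that for every Borel family $\phi\colon\R\times\R\to 2$ the set $\Set{r\in\R \given \{x:\phi(r,x)=1\}\in\mathcal U}$ is Borel in $\R$. This is a severe constraint: it rules out predicates of an existential nature such as $\Set{A \given A\cap C\neq\emptyset}$ for uncountable Borel $C$, whose pullbacks would be projections of Borel sets and hence merely analytic, typically non-Borel. The plan is to exploit exactly this exclusion: if $D_{\mathcal U}$ failed to be $\nu$-null then, being Borel of positive measure, it would be uncountable and contain a perfect set, and one would try to manufacture from it a Borel family $\phi$ whose associated preimage is analytic but not Borel, contradicting measurability of $\mathcal U$. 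Turning this sketch into a proof requires the descriptive-set-theoretic machinery of quasi-Borel spaces --- Luzin--Novikov-type uniformization and Souslin's theorem --- to certify that the offending preimages are genuinely non-Borel; this is precisely where the cartesian closure of $\qbs$, and the failure of the analogous property in $\stoch$ and $\borelstoch$, does the work.

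Once $\nu(D_{\mathcal U})=0$ is established for all measurable $\mathcal U$, the reduction of the first paragraph immediately yields $\mathcal{RS}_\nu=\delta_\emptyset$. I would organize the write-up so that the purely formal reduction (equality of measures, Borelness of $S_{\mathcal U}$, the complementation symmetry) is cleanly separated from the single technical lemma asserting that a $\nu$-random singleton is measurably indistinguishable from $\emptyset$, since it is the latter lemma --- genuinely a statement about the interplay of atomlessness with the function-space $\sigma$-algebra --- that carries all the weight, and which one may alternatively invoke directly from \cite{sabok2021semantics}.
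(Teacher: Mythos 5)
Your formal skeleton is sound and matches the paper's: reduce to showing $\nu(S_{\mathcal U}) = \llbracket \emptyset \in \mathcal U \rrbracket$ for every $\mathcal U \in \Sigma_{M_{2^\R}}$, where $S_{\mathcal U} = \Set{x \in \R \given \{x\} \in \mathcal U}$ is Borel because $\{\ph\}$ is a random element, and use complementation to assume $\emptyset \notin \mathcal U$. The paper itself stops essentially there: its ``proof'' is explicitly only a proof idea that defers the key lemma{\,\textemdash\,}that no measurable $\mathcal U$ can distinguish a random singleton from $\emptyset$\,{\textemdash\,}to \cite{sabok2021semantics}, illustrating non-measurability only on the single family $\mathcal E = \{\emptyset\}$. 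So your fallback of invoking that reference directly reproduces what the paper actually does.

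However, the strategy you propose for the key lemma itself{\,\textemdash\,}the part you call the heart of the argument{\,\textemdash\,}cannot work as sketched, and the problem is not missing machinery but machinery pointing the wrong way. By hypothesis you know $\mathcal U$ only on $\emptyset$ and on singletons; hence any Borel family $\phi \colon \R \times \R \to 2$ whose preimage $\Set{r \given \phi(r,\ph)^{-1}(1) \in \mathcal U}$ you can actually evaluate must have all sections empty or singletons. But for every such family that preimage is \emph{automatically Borel}: it is an injective Borel projection, hence Borel by the Luzin--Suslin theorem, and the Lusin--Novikov theorem extends this to countable sections. So the ``offending preimages'' you hope to certify as analytic-but-not-Borel are provably Borel, and no contradiction can be extracted from a perfect subset of $S_{\mathcal U}$ together with small-section families. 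Concretely, the natural candidate{\,\textemdash\,}the graph of a Borel injection $g \colon \R \to P \subseteq S_{\mathcal U}$ restricted to an analytic non-Borel set $A$, so that sections are $\{g(r)\}$ for $r \in A$ and $\emptyset$ otherwise{\,\textemdash\,}is not a Borel family in the first place: if it were, Luzin--Suslin applied to its injective projection would force $A$ to be Borel. This obstruction is precisely why the proof in \cite{sabok2021semantics} is genuinely involved: it must exploit the Borel-on-Borel condition on families with uncountable sections, where membership in $\mathcal U$ is not determined by the singleton data, and that step requires ideas beyond the perfect-set-plus-projection argument you outline.
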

		\begin{proof}[Proof idea]
			The details of the proof are covered extensively in \cite[Theorem 4.1]{sabok2021semantics}. 
			We have to show that for all $\mathcal U \in \Sigma_{M_{2^\R}}$, 
			\begin{equation}\label{eq:privcymeasure}
				\mathcal{RS}_\nu(\mathcal U) = \delta_\emptyset(\mathcal U) = \llbracket \emptyset \in \mathcal U \rrbracket. 
			\end{equation}
			This claim hinges on the fact that the the induced $\sigma$-algebra on $2^\R$ is highly restrictive; we have
			\begin{equation}
				\Sigma_{M_{2^\R}} = \Set*[\Big]{ \mathcal{U} \given \forall \alpha \colon \R \times \R \to 2 \text{ measurable} , \; \Set*[\big]{ x  \given  \alpha(x, \ph) \in \mathcal{U} } \in \Sigma_\R }, 
			\end{equation}
			which is known as Borel-on-Borel in the literature on higher-order measurability (e.g.\ \cite{kechris}). 
			All families $\mathcal U$ which would be assigned different values by the formulas in \eqref{eq:privcymeasure} turn out to be \emph{not measurable}. As a brief non-example, consider the family $\mathcal E = \{ \emptyset \}$. This would clearly differentiate a random singleton from the empty set, because we have
			\begin{equation}
				\mathcal{RS}_\nu(\mathcal E) = \nu \bigl( \Set{ x \in \R  \given  \{x\} \in \mathcal E } \bigr) = 0    
					\quad \text{ and } \quad   
					\delta_\emptyset(\mathcal E) = \llbracket \emptyset \in \mathcal \mathcal E \rrbracket = 1.
			\end{equation}
			However, it can be shown that $\mathcal E \notin \Sigma_{M_{2^\R}}$. 
			In other words, checking if a set is empty is \emph{not} a morphism $2^\R \to 2$ in $\qbs$. 
		\end{proof} 
		
		We call \Cref{thm:privacy} \emph{privacy equation} because the random number $x \in \mathbb{R}$ is `anonymized in distribution' when expressed as a random set $\{x\} \in 2^\R$. 
		In particular, we have a dilation
			\begin{equation}\label{eq:joint_singleton}
				\tikzfig{joint_singleton}
			\end{equation}
		of $\nu$, with environment $2^\R$, which gives no information about the value $x$ in its environment marginal that behaves like the empty set.
		In this sense, we could view $\psi$ as a `private dilation'{\,\textemdash\,}one that leaks no information.
		
		Another way to conceive of private dilations is to require that there is no correlation between the local output and the output leaked into the environment, i.e.\ that the two outputs are independent.
		However, the dilation $\psi$ does not factorize, and so it would not be `private' in this sense.
		In particular, provided access to $x \in \R$, one can distinguish the leaked output from $\emptyset$ by applying the evaluation morphism given in \eqref{eq:evaluation}. 
		
		We employ these curious properties of $\psi$ to show that $\qbstoch$ is not a positive Markov category.
		\begin{proposition}\label{proposition:qbs_no_dmp}
			In $\qbstoch$, consider the state $\psi \colon I \to 2^\R \otimes \R$ defined by \cref{eq:joint_singleton}.
			Then the $2^\R$-marginal $\psi_{2^\R} \colon I \to 2^\R$ is deterministic, but $\psi$ is not the product of its marginals.
			Consequently, deterministic marginal independence (\Cref{def:dmp}) does not hold in $\qbstoch$.
		\end{proposition}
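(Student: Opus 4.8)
The plan is to treat the two marginals of $\psi$ separately, show that the $2^\R$-marginal is the Dirac state $\delta_\emptyset$ (hence deterministic), and then exhibit a single morphism that detects the correlation between the two outputs, thereby ruling out factorization. Since $\psi$ sends a sample $x \sim \nu$ to the pair $(\{x\}, x)$, its $\R$-marginal is simply $\nu$, whereas its $2^\R$-marginal is the law of the random singleton $\{x\}$, i.e.\ $\mathcal{RS}_\nu$. By the privacy equation (\Cref{thm:privacy}), for atomless $\nu$ we have $\mathcal{RS}_\nu = \delta_\emptyset$, and since every Dirac state is deterministic, $\psi_{2^\R}$ is deterministic, as claimed.

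For the non-factorization, I would compare $\psi$ with the product $\psi_{2^\R} \otimes \psi_\R = \delta_\emptyset \otimes \nu$ of its marginals by postcomposing both states with the evaluation morphism $\mathsf{ev} \colon 2^\R \otimes \R \to 2$ from \eqref{eq:evaluation}. Under $\psi$ the pair is always of the form $(\{x\}, x)$, so $\mathsf{ev}$ returns $\llbracket x \in \{x\} \rrbracket = 1$ with probability one, whence $\mathsf{ev} \circ \psi = \delta_1$. Under the product state, however, the first output is always $\emptyset$ independently of $x$, so $\mathsf{ev}$ returns $\llbracket x \in \emptyset \rrbracket = 0$ with probability one, whence $\mathsf{ev} \circ (\delta_\emptyset \otimes \nu) = \delta_0$. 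As $\delta_1 \neq \delta_0$ and $\mathsf{ev}$ is a genuine morphism of quasi-Borel spaces, the states $\psi$ and $\delta_\emptyset \otimes \nu$ cannot coincide.

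To conclude the failure of deterministic marginal independence, I would observe that $\psi$ is precisely a dilation of its deterministic $2^\R$-marginal $p \coloneqq \psi_{2^\R}$, with environment $\R$. Were DMI (\Cref{def:dmp}) to hold, this dilation would have to display the conditional independence of $2^\R$ and $\R$ given the trivial input, i.e.\ $\psi$ would equal the product $\psi_{2^\R} \otimes \psi_\R$ of its marginals, contradicting the previous paragraph. Hence DMI fails in $\qbstoch$, and by \Cref{thm:eqpos} the category is not positive.

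I expect the only genuine subtlety to lie in the non-factorization step. A naive attempt to compare $\psi$ and $\delta_\emptyset \otimes \nu$ directly on the $\sigma$-algebra $\Sigma_{M_{2^\R}}$ is awkward, precisely because that $\sigma$-algebra is too coarse to distinguish a random singleton from the empty set: this coarseness is exactly what makes the privacy equation hold in the first place. The evaluation morphism circumvents this by transporting the comparison to the two-element object $2$, where measurability is trivial and the correlation $x \in \{x\}$ becomes visible. The crux is therefore just to use that $\mathsf{ev}$ is a valid $\qbs$-morphism, which is already recorded alongside \eqref{eq:evaluation}.
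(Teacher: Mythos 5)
Your proof is correct and follows essentially the same route as the paper: the privacy equation (\Cref{thm:privacy}) identifies the $2^\R$-marginal as the deterministic state $\delta_\emptyset$, and postcomposition with the evaluation morphism $\mathsf{ev}$ from \eqref{eq:evaluation} separates $\psi$ (which yields $\mathsf{tt}$ almost surely) from the product of marginals $\delta_\emptyset \otimes \nu$ (which yields $\mathsf{ff}$ almost surely). The only difference is presentational: you spell out the final step, that $\psi$ is a dilation of its deterministic marginal and so DMI would force factorization, which the paper leaves implicit.
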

		\begin{proof}
			By \Cref{thm:privacy}, the $2^\R$-marginal of $\psi$ equals $\delta_\emptyset$:
			\begin{equation}
				\tikzfig{joint_singleton_marginal}
			\end{equation}
			while the second marginal equals $\nu$. 
			Therefore, the product of the marginals is $\delta_\emptyset \otimes \nu$. 
			This is different from $\psi$, as we can witness by postcomposing with the evaluation map 
			\begin{equation}
				\tikzfig{joint_singleton_ni}
			\end{equation}
			where $\mathsf{tt},\mathsf{ff} \colon 1 \to 2$ are the two boolean truth values. 
		\end{proof}
		
		By applying \Cref{thm:eqpos,prop:causal_pos}, we immediately obtain the following consequence, which had already been announced in~\cite{sabok2021semantics,stein2021structural}.
		\begin{corollary}
			$\qbstoch$ is neither a positive nor a causal Markov category.
		\end{corollary}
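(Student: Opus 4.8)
The plan is to chain together the three results already established and let them do all the work, since this corollary is a direct consequence of the preceding development. First I would invoke \Cref{proposition:qbs_no_dmp}, which exhibits the explicit state $\psi \colon I \to 2^\R \otimes \R$ whose $2^\R$-marginal is deterministic (equal to $\delta_\emptyset$ by the privacy equation, \Cref{thm:privacy}) yet which fails to factor as the product of its marginals. This shows directly that deterministic marginal independence fails in $\qbstoch$.

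Next I would apply \Cref{thm:eqpos}, which records the equivalence of positivity with deterministic marginal independence (the implication $\ref{conddmp}\Rightarrow\ref{condpos}$ in contrapositive form suffices). Since DMI fails in $\qbstoch$, it follows immediately that $\qbstoch$ is not a positive Markov category.

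Finally, to rule out causality, I would argue by contraposition using \Cref{prop:causal_pos}, which asserts that causality implies positivity. Were $\qbstoch$ causal, it would have to be positive, contradicting the previous step; hence $\qbstoch$ is not causal either.

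I expect no genuine obstacle here: all the substance lives in \Cref{proposition:qbs_no_dmp} (which in turn rests on the privacy equation) and in the two general implications \Cref{thm:eqpos,prop:causal_pos}. The only thing to be careful about is to cite the direction of each equivalence and implication correctly, so that the logical flow reads failure of DMI $\Rightarrow$ failure of positivity $\Rightarrow$ failure of causality. Concretely, I would write:
\begin{proof}
    By \Cref{proposition:qbs_no_dmp}, deterministic marginal independence fails in $\qbstoch$, so $\qbstoch$ is not positive by \Cref{thm:eqpos}. Since causality implies positivity by \Cref{prop:causal_pos}, it follows that $\qbstoch$ is not causal either.
\end{proof}
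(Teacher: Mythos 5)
Your proposal is correct and matches the paper's own argument exactly: the paper also deduces the corollary by combining \Cref{proposition:qbs_no_dmp} (failure of DMI) with \Cref{thm:eqpos} (positivity $\Leftrightarrow$ DMI) and \Cref{prop:causal_pos} (causality $\Rightarrow$ positivity). The logical directions are cited correctly, so nothing is missing.
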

		Note that because $\qbstoch$ faithfully contains $\borelstoch$, positivity \emph{will} hold in the full subcategory of all quasi-Borel spaces that come from standard Borel spaces. 
		The function space $2^\R$ is not of that form. 
		This gives a novel, probabilistic reading to Aumann's result that the evaluation morphism $\mathsf{ev}$ cannot be made into a measurable map~\cite{aumann1961function}.
		It is, nevertheless, a morphism in $\qbs$.  
	
	\subsection{Fresh name generation}\label{sec:fresh_names}
	
		Fresh name generation is a classic area of computer science \cite{pitts-stark-nu,stark:cmln}. 
		A \emph{pure name} is an abstract entity which contains no other information except whether it is equal to other names. 
		Typical examples of names are identifiers such as bound variables: 
		In definitions such as $f(x,y) = xy^2$, the names of the variables $x,y$ do not matter as long as they remain distinct. 
		They could be switched or replaced by say $z,w$ without changing the meaning of the expression. 
		New names are allocated \emph{freshly}, when they are distinct from any other name already in place.
		In languages such as LISP, this primitive is called \texttt{gensym}.
		
		We give a high-level summary of the categorical semantics of name generation and show that it is another instance of information flow which can be modeled using Markov categories. 
		We also show that the information hiding for random functions arises naturally in the context of name generation, which makes it a prototypical example of non-positivity.
		
		A \emph{categorical model of name generation} consists of the following pieces of structure, satisfying further conditions spelled out in \cite[Section 4.1]{stark:cmln}:
		\begin{itemize}
			\item a cartesian closed category $\cC$ and a distinguished object $\mathbb A$ of names,
			\item an equality test $(=) \colon \mathbb A \times \mathbb A \to 2$, where $2 \coloneqq 1+1$ is assumed to exist. (One may think of the coproduct inclusions $\mathsf{tt}, \mathsf{ff} \colon 1 \to 2$ as represent the boolean truth values true and false.)
			\item a commutative affine monad $T \colon \cC \to \cC$,
			\item a distinguished state $\nu \colon I \to T(\mathbb A)$ which represents picking a fresh name.
		\end{itemize}
		Various non-degeneracy axioms are also assumed, such as that the unit $\eta_2 \colon 2 \to T(2)$ is monic.
		The Kleisli category of $T$ is a Markov category, and the freshness condition for $\nu$ states that testing a fresh name on equality always returns $\mathsf{ff}$. 
		We can write this condition in string diagrams:
		\begin{equation}\label{eq:fresh}
			\tikzfig{fresh}
		\end{equation}
		This makes $\nu$ into an abstract version of the atomless measure used in \Cref{sec:privacy}.
		
		An important problem in computer science is then to understand the behavior of higher-order functions that generate fresh names locally. 
		The program
		\begin{equation} \mathtt{let\;x=gensym()\;in\;\lambda y.(x=y)} \label{privfn} \end{equation}
		generates a fresh name $\mathtt x$ and returns a function $\mathbb A \to 2$ which tests its input $\mathtt y$ for equality with $\mathtt x$. 
		One can show that this function is observably indistinguishable from the function $\mathtt{\lambda y.false}$ \cite[Example~8]{stark:cmln}. 
		This is because the name $\mathtt{x}$ remains \emph{private} or enclosed in the function $\mathtt{\lambda y.(x=y)}$, and it can never be extracted programmatically in order to obtain the output $\mathtt{true}$. 
		
		In a categorical model of name generation, the function $\mathtt{\lambda y.(x=y)}$ is modeled using the singleton map $\{ \ph \} \colon \mathbb A \to 2^{\mathbb A}$ completely analogously to quasi-Borel spaces. 
		A model of name generation is then said to \emph{satisfy the privacy equation} if 
		\begin{equation}\label{eq:name_privacy}
			\tikzfig{name_privacy}
		\end{equation}
		holds.
		Given these ingredients, we can follow the reasoning of \cref{proposition:qbs_no_dmp} in an analogous way.
		This is a striking example how the abstraction of Markov categories enables connections between different areas of mathematics and computer science.
		That is, we obtain the following statement.
		\begin{proposition}\label{prop:name_generation}
			Every categorical model of name generation defines a Markov category. 
			If it is nondegenerate and satisfies the privacy equation, then the Markov category is not positive. 
		\end{proposition}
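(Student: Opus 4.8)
The statement has two parts, and I would prove them in sequence. First, to see that every categorical model of name generation defines a Markov category, I would simply invoke the structure already assumed in the definition: the model comes equipped with a cartesian closed category $\cC$ and a commutative affine monad $T \colon \cC \to \cC$. The Kleisli category $\Kl(T)$ of an affine commutative monad on a cartesian (indeed cartesian closed) category is automatically a Markov category, with the copy and discard maps inherited from the cartesian structure of $\cC$ via the monad unit. This is standard (and was used implicitly when introducing $\qbstoch$), so this half is essentially a one-line observation referencing the commutativity and affineness of $T$.

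\textbf{Reducing the second part to \Cref{proposition:qbs_no_dmp}.} For the main claim, the plan is to replay the argument of \Cref{proposition:qbs_no_dmp} verbatim, with the abstract ingredients of a name-generation model standing in for their quasi-Borel counterparts. Concretely, I would form the state $\psi \colon I \to 2^{\mathbb A} \otimes \mathbb A$ defined by copying the fresh name $\nu$ and applying the singleton map $\{\ph\} \colon \mathbb A \to 2^{\mathbb A}$ to one copy, exactly as in \eqref{eq:name_privacy}. By \Cref{thm:eqpos}, it suffices to exhibit a failure of deterministic marginal independence, so I would show that (i) the $2^{\mathbb A}$-marginal of $\psi$ is deterministic, and (ii) $\psi$ is nonetheless not the product of its marginals. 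Claim (i) is precisely the content of the privacy equation \eqref{eq:name_privacy}, which the model is assumed to satisfy: it forces the $2^{\mathbb A}$-marginal to equal the deterministic state $\eta_{2^{\mathbb A}}(\emptyset)$ (the ``empty set'' name-test function $\mathtt{\lambda y.false}$), which is deterministic because it lies in the image of the unit $\eta$.

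\textbf{The main obstacle: separating $\psi$ from the product of its marginals.} The step requiring genuine care is (ii), the abstract analogue of the evaluation-map computation in \Cref{proposition:qbs_no_dmp}. Here I would postcompose $\psi$ (and the candidate product state) with the evaluation morphism $\mathsf{ev} \colon 2^{\mathbb A} \times \mathbb A \to 2$ coming from cartesian closure, equivalently expressed through the equality test $(=) \colon \mathbb A \times \mathbb A \to 2$. Along the correlated state $\psi$, evaluation returns $\mathsf{tt}$ with certainty, since the name being tested is the very name inside the singleton; along the product of marginals, one tests a fresh name against the deterministic empty function, which returns $\mathsf{ff}$ by the freshness condition \eqref{eq:fresh}. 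The place where nondegeneracy is essential is in concluding that these two outputs are genuinely different states $I \to T(2)$: this is exactly what the assumption that $\eta_2 \colon 2 \to T(2)$ is monic guarantees, ensuring $\mathsf{tt}$ and $\mathsf{ff}$ remain distinct after passing into the Kleisli category. I expect the only subtlety is bookkeeping the string-diagrammatic identities so that the abstract equality test plays the role of $\mathsf{ev}$ and the freshness axiom \eqref{eq:fresh} is applied to the correct leg; once that is arranged, the contradiction with DMI, and hence the failure of positivity via \Cref{thm:eqpos}, follows immediately.
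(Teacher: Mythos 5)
Your proposal is correct and is essentially the paper's own argument: the paper gives no separate proof of \Cref{prop:name_generation}, saying only that one follows the reasoning of \Cref{proposition:qbs_no_dmp} analogously (the Kleisli category of the affine commutative monad $T$ is the Markov category; the privacy equation \eqref{eq:name_privacy} makes the $2^{\mathbb A}$-marginal of $\psi$ deterministic; postcomposing with evaluation and invoking nondegeneracy of $\eta_2$ separates $\psi$ from the product of its marginals, so DMI fails and \Cref{thm:eqpos} rules out positivity), which is exactly the structure of your write-up. One cosmetic point: once the privacy equation identifies the $2^{\mathbb A}$-marginal with the constant-false function, the $\mathsf{ff}$ outcome on the product leg follows from cartesian closure alone (evaluating that function), not from \eqref{eq:fresh}, whereas the $\mathsf{tt}$ outcome on the $\psi$ leg is where the equality-test axioms of Stark's model (reflexivity, among the conditions the paper cites but does not spell out) are actually used — a relabeling that does not affect the correctness of your argument.
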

		It is relatively involved to construct such a model, but Stark provides one in \cite[Section 6.1]{stark:cmln} using a categorified version of logical relations. 
		We stress that this model has no probabilistic ingredients whatsoever.
		
		On the other hand, the use of Markov categories lets us apply synthetic probabilistic terminology and intuition to reason about name generation. In fact, choosing names at random is a common strategy to implement \texttt{gensym} in practice. By using an atomless measure $\nu$, we formally obtain purely probabilistic semantics for name generation, as described in \cite{sabok2021semantics}:
		
		\begin{theorem}[{\cite[Theorem~3.8]{sabok2021semantics}}]
			Quasi-Borel spaces are a nondegenerate categorical model of name generation, where $\mathbb A$ is any uncountable standard Borel space and $\nu$ any atomless measure. It furthermore satisfies the privacy equation.
		\end{theorem}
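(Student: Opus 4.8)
The plan is to assemble the data of a categorical model of name generation on $\qbs$ from facts already recorded in \Cref{sec:def_qbs}, and then to reduce the two substantive conditions---freshness and the privacy equation---to elementary properties of atomless measures together with \Cref{thm:privacy}. Since the statement reproves \cite[Theorem~3.8]{sabok2021semantics}, the genuine content is concentrated in the privacy equation, which we have already isolated, and in transporting it from $\R$ to an arbitrary uncountable standard Borel space.

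First I would exhibit the four pieces of structure. Cartesian closure of $\qbs$ and the fact that $P$ is a commutative affine monad are due to \cite{heunen2017convenient} and are recalled in \Cref{sec:def_qbs}; these furnish the ambient category and the monad $T \coloneqq P$, so that $2 = 1+1$ and the exponential $2^{\mathbb A}$ exist. For the object of names I take $\mathbb A$ to be an uncountable standard Borel space, embedded into $\qbs$ via the conservative extension of \cite{heunen2017convenient}; because $\qbstoch$ faithfully contains $\borelstoch$, any atomless Borel probability measure on $\mathbb A$ is a legitimate quasi-Borel probability measure and hence determines a state $\nu \colon I \to P(\mathbb A)$. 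The equality test $(=) \colon \mathbb A \times \mathbb A \to 2$ is a morphism of quasi-Borel spaces because the diagonal of $\mathbb A \times \mathbb A$ is Borel (every standard Borel space is Borel-isomorphic to $\R$, whose diagonal is closed), so that for any random element $\R \to \mathbb A \times \mathbb A$ the composite into $2$ is Borel measurable.

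Next I would discharge the non-degeneracy axioms of \cite[Section~4.1]{stark:cmln}. The representative one singled out in the excerpt---that $\eta_2 \colon 2 \to P(2)$ is monic---holds because $\eta_2(\mathsf{tt}) = \delta_{\mathsf{tt}}$ and $\eta_2(\mathsf{ff}) = \delta_{\mathsf{ff}}$ are distinct Dirac measures; the remaining axioms are of the same elementary flavour and are checked directly. The freshness condition \eqref{eq:fresh} unwinds, in measure-theoretic terms, to the assertion that comparing an independently generated fresh name against any fixed name returns $\mathsf{ff}$ almost surely, which is precisely the vanishing of singletons $\nu(\{a\}) = 0$, i.e.\ the definition of atomlessness of $\nu$.

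Finally, the privacy equation \eqref{eq:name_privacy} is the abstract form of \Cref{thm:privacy}. That theorem was stated for $\mathbb A = \R$, but every uncountable standard Borel space is Borel-isomorphic to $\R$ by the Borel isomorphism theorem, and such an isomorphism transports the atomless measure, the induced quasi-Borel structure, the exponential $2^{\mathbb A}$, and the singleton map $\{\ph\}$; hence the privacy equation for general $\mathbb A$ follows from the case $\mathbb A = \R$ already established. The main obstacle is therefore bookkeeping rather than new mathematics: one must match the string-diagrammatic conditions of \cite{stark:cmln} against the concrete measure-theoretic facts and verify that the Borel isomorphism is compatible with all the structure in play---a verification carried out in detail in \cite[Theorem~3.8]{sabok2021semantics}, which we here reconstruct only at the level of a sketch.
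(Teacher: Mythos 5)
Correct. The paper states this result as an import from \cite[Theorem~3.8]{sabok2021semantics} with no proof of its own, and your sketch faithfully reconstructs the strategy of that source and of the surrounding discussion in \Cref{sec:qbs}: the categorical structure comes from \cite{heunen2017convenient}, freshness unwinds to atomlessness ($\nu(\{a\})=0$ for all $a$), and the privacy equation reduces to \Cref{thm:privacy} transported along a Borel isomorphism $\mathbb{A} \cong \R$, which is indeed an isomorphism in $\qbs$ and hence carries the exponential, the singleton map, and the atomless measure across.
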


\section{Dilations and positivity properties in semicartesian categories}\label{sec:semicart}

	We now shift the focus from Markov categories to the more general semicartesian categories (\Cref{def:semicartesian}).
	A substantial theory of information flow can be developed already in semicartesian categories, as shown for example in \cite{houghtonlarsen2021dilations}. 
	This suggests that categorical probability might not need Markov categories after all, but that semicartesian categories could in fact be sufficient.
	This would be of interest not only as a conceptual clarification on the foundations of probability, but also insofar as some of its results may apply to quantum probability.

	The primary concept needed for the development of categorical probability in semicartesian terms the notion of dilation, which we have used in the previous sections in the context of Markov categories, but which is meaningful even for semicartesian categories in general.
	Throughout this section, $\cD$ thus refers to a semicartesian category.
	
	\subsection{Categories of dilations}\label{sec:dilations}
	
		In this part we study dilations more in detail (recall them from \Cref{def:dilation}).
		The following notion of dilational equality generalizes the definition of strongly almost sure equality \cite[Definition~5.7]{chojacobs2019strings}, defined originally with respect to a state $p \colon I \to X$.
		We have already encountered it in \cref{def:pes}, which defines a Markov category with parametrized equality strengthening as one in which equality almost surely implies dilational equality.
		
		\begin{definition}
			\label{def:as_eq}
			Let $p \colon A \to X$ and $f,g \colon X \to Y$ be morphisms in $\cD$. 
			We say that $f$ and $g$ are \newterm{$\bm{p}$-dilationally equal}, written as
			\[
				f \dileq{p} g,
			\]
			if for every dilation $\pi$ of $p$, we have
			\begin{equation}\label{eq:dilation_as_eq}
				\tikzfig{dilation_as_eq}
			\end{equation}
		\end{definition}
		Intuitively, $f \dileq{p} g$ means that $f$ and $g$ cannot be distinguished even with access to whatever environment that $p$ may have leaked information to.
		While this trivially implies $f p = g p$, it is typically a strictly stronger condition:

		\begin{example}
			In $\finstoch$, let $p \colon I \to \{0,1\}$ be the uniform distribution, $f = \id_{\{0,1\}}$, and $g$ the non-identity permutation on $\{0,1\}$.
			Then clearly $f p = g p$, but using $\pi \coloneqq \cop_{\{0,1\}} \circ{} p$ witnesses $f \not\dileq{p} g$.
			This is easy to understand upon noting that the distribution of a fair coin is invariant under exchanging heads and tails, but copying the outcome and switching only one copy while retaining the other clearly changes the distribution.
		\end{example}

		This example suggests that dilational equality in the Markov categories case is closely related to almost sure equality~\cite[Definition~13.1]{fritz2019synthetic}. 
		We now formalize this relation.
		
		\begin{proposition}
			\label{prop:markov_dilation}
			In a Markov category $\cC$, dilational equality implies equality almost surely.
			That is, for any $f,g \colon X \to Y$ and any $p \colon A \to X$, 
			\begin{equation}\label{eq:dileq_ase}
				f \dileq{p} g   \quad \implies \quad   f \ase{p} g .
			\end{equation}
			The converse is true if and only if $\cC$ is a causal Markov category.
		\end{proposition}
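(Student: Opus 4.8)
The plan is to treat the two directions separately and, crucially, to recognize that the (universally quantified) converse is nothing but the definition of parametrized equality strengthening (\Cref{def:pes}) read backwards, so that its equivalence with causality follows at once from \Cref{prop:pes_causality}.

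First I would establish the forward implication $f \dileq{p} g \Rightarrow f \ase{p} g$, which should hold in \emph{any} Markov category with no further hypotheses. The key observation is that $\cop_X \circ p \colon A \to X \tensor X$ is itself a dilation of $p$, since discarding the second output and invoking the counit law $(\id_X \tensor \discard_X) \circ \cop_X = \id_X$ recovers $p$; this is the ``maximally leaking'' dilation, copying the entire output of $p$ into the environment $E = X$. Applying the hypothesis $f \dileq{p} g$ to this single dilation $\pi = \cop_X \circ p$ yields
\[
	(f \tensor \id_X) \circ \cop_X \circ p = (g \tensor \id_X) \circ \cop_X \circ p,
\]
which, up to the symmetry of $\cop_X$, is precisely the defining equation of $p$-almost sure equality from \Cref{def:ase}. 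Since this argument only instantiates the ``for every dilation'' clause at one convenient dilation, it is a pure weakening and requires nothing beyond the comonoid laws.

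For the converse I would note that the statement ``$f \ase{p} g \Rightarrow f \dileq{p} g$ for all $f,g,p$'' becomes, after unfolding \Cref{def:as_eq,def:ase}, exactly the condition of parametrized equality strengthening: its antecedent \eqref{eq:eq_str1} is $f \ase{p} g$ and its consequent \eqref{eq:eq_str2} is the dilational-equality equation for an arbitrary dilation of $p$. Because \Cref{prop:pes_causality} already identifies parametrized equality strengthening with causality, the converse holds for all $f,g,p$ if and only if $\cC$ is causal, which delivers both the ``if'' and ``only if'' halves simultaneously.

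I expect the only real subtlety to be the bookkeeping of output orderings and the role of $\swap$: one must verify that the equation produced by $\cop_X \circ p$ matches the conventional form of \Cref{def:ase} on the nose (rather than merely up to a transposition of the two tensor factors), with cocommutativity of copying resolving the discrepancy. Aside from this, the proof is essentially a matter of spotting the right special dilation in one direction and reusing the earlier reformulation (\Cref{prop:pes_causality}) in the other, so no substantial computation is anticipated.
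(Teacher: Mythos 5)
Your proposal is correct and matches the paper's own proof essentially step for step: the forward direction instantiates dilational equality at the copy dilation $\cop_X \circ p$ to recover the defining equation of $p$-almost sure equality, and the converse is identified verbatim with parametrized equality strengthening (\Cref{def:pes}), whose equivalence with causality is \Cref{prop:pes_causality}. Your remark about cocommutativity handling the output ordering is exactly the kind of bookkeeping the paper leaves implicit in its string diagram.
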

		
		\begin{proof}
			Every morphism $p$ has a dilation given by
			\begin{equation}
				\tikzfig{markov_dilation}
			\end{equation}
			Substituting it for $\pi$ in \Cref{eq:dilation_as_eq} produces exactly the desired \as{$p$} equality of $f$ and $g$.
			
			Note that the converse of implication \eqref{eq:dileq_ase} is precisely the property of parametrized equality strengthening from \cref{def:pes}, which is equivalent to causality as shown in \cref{prop:pes_causality}.
		\end{proof}
		
		To every morphism in a semicartesian category we can associate an entire category of dilations{\,\textemdash\,}a variant of the one introduced in \cite[Remark~2.2.8]{houghtonlarsen2021dilations}.
		\begin{definition}
			\label{def:cat_dilations}
			Let $p \colon A \to X$ be any morphism in $\cD$. 
			Then its \newterm{category of dilations}, denoted
			\[
				\dilations(p),
			\]
			has dilations of $p$ as objects.
			Given two dilations of $p$, say $\pi \in \cD(A, X \otimes E)$ and $\pi' \in \cD(A, X \otimes E')$, morphisms $\pi \to \pi'$ in $\dilations(p)$ correspond to $\dileq{\pi}$ equivalence classes of morphisms $f \in \cD(E, E')$ such that
			\begin{equation}\label{eq:dilation_morphism}
				\tikzfig{dilation_morphism}
			\end{equation}
			holds.
			Composition of morphisms is defined as composition of representatives in $\cD$.
		\end{definition}
		In other words, if $f_1$ and $f_2$ both satisfy \Cref{eq:dilation_morphism}, then these represent the same morphism of dilations if and only if $f_1 \dileq{\pi} f_2$, which means that for every dilation $\rho$ of $\pi$, we have
		\begin{equation}\label{eq:dilation_morphism_equivalence}
			\tikzfig{dilation_morphism_equivalence}
		\end{equation}
		where the third output wire $F$ denotes the additional environment object associated with $\rho$.
		
		\begin{lemma}
			Composition of morphisms in $\dilations(p)$ is well-defined.
		\end{lemma}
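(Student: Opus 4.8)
The plan is to unpack both the objects and the morphisms of $\dilations(p)$ explicitly and then verify the two things hidden in the word ``well-defined'': that the composite of two chosen representatives is again a morphism of dilations, and that its $\dileq{\pi}$-class depends only on the two classes being composed. Write $\pi \colon A \to X \otimes E$, $\pi' \colon A \to X \otimes E'$, and $\pi'' \colon A \to X \otimes E''$ for three dilations of $p$, and let $[f]\colon \pi \to \pi'$ and $[h]\colon \pi' \to \pi''$ be morphisms with representatives $f \colon E \to E'$ and $h \colon E' \to E''$, so that by \eqref{eq:dilation_morphism} we have $(\id_X \otimes f)\circ \pi = \pi'$ and $(\id_X \otimes h)\circ \pi' = \pi''$. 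That $h \circ f$ is again a morphism $\pi \to \pi''$ is then immediate from bifunctoriality of $\otimes$:
\[
	(\id_X \otimes (h \circ f)) \circ \pi = (\id_X \otimes h) \circ (\id_X \otimes f) \circ \pi = (\id_X \otimes h) \circ \pi' = \pi'',
\]
so $h \circ f$ satisfies \eqref{eq:dilation_morphism}.

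The substance is independence of representatives. Throughout, $f_1 \dileq{\pi} f_2$ abbreviates $(\id_X \otimes f_1) \dileq{\pi} (\id_X \otimes f_2)$ in the sense of \Cref{def:as_eq}, and $\dileq{\pi}$ is an equivalence relation: reflexivity and symmetry are clear, and transitivity follows because the defining condition \eqref{eq:dilation_morphism_equivalence} is quantified over all dilations $\rho$ of $\pi$. Given $f_1 \dileq{\pi} f_2$ and $h_1 \dileq{\pi'} h_2$, I would prove $h_1 \circ f_1 \dileq{\pi} h_2 \circ f_2$ by passing through the intermediate term $h_1 \circ f_2$, that is, via the two congruences
\[
	h_1 \circ f_1 \dileq{\pi} h_1 \circ f_2 \dileq{\pi} h_2 \circ f_2 .
\]
The first is congruence under post-composition: applying $\id_X \otimes h_1 \otimes \id_F$ to both sides of the instance of \eqref{eq:dilation_morphism_equivalence} witnessing $f_1 \dileq{\pi} f_2$ turns it, for each dilation $\rho$ of $\pi$ with environment $F$, into the witness for $h_1 \circ f_1 \dileq{\pi} h_1 \circ f_2$.

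The second congruence is the crux, since it compares dilational equality relative to $\pi'$ with dilational equality relative to $\pi$. The key observation is that dilations push forward along $f$: for any dilation $\rho \colon A \to X \otimes E \otimes F$ of $\pi$, the morphism $\sigma \coloneqq (\id_X \otimes f \otimes \id_F)\circ \rho$ is a dilation of $\pi'$ with the same environment $F$, since discarding $F$ recovers $(\id_X \otimes f)\circ \pi = \pi'$. Feeding $\sigma$ into the hypothesis $h_1 \dileq{\pi'} h_2$ and then rewriting $\sigma$ back in terms of $\rho$ yields exactly the equation witnessing $h_1 \circ f_2 \dileq{\pi} h_2 \circ f_2$; as $\rho$ was arbitrary, this holds. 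Combining the two congruences by transitivity gives $h_1 \circ f_1 \dileq{\pi} h_2 \circ f_2$, so $[h \circ f]$ is independent of the chosen representatives, which completes well-definedness. I expect this second, ``change-of-base'' step to be the only genuine obstacle; everything else is bookkeeping with bifunctoriality and the fact that $\dileq{\pi}$ is an equivalence relation.
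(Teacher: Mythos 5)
Your proof is correct and follows essentially the same route as the paper's: the two congruence steps you identify (post-composition with $\id_X \otimes h_1 \otimes \id_F$, and the ``change-of-base'' observation that $(\id_X \otimes f \otimes \id_F) \circ \rho$ is a dilation of $\pi'$ whenever $\rho$ is a dilation of $\pi$) are exactly the two steps in the paper's argument, which then also concludes by chaining them. Your version is merely slightly more explicit, spelling out the bifunctoriality check that composites of representatives satisfy \eqref{eq:dilation_morphism} and the transitivity of $\dileq{\pi}$, both of which the paper leaves implicit.
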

		\begin{proof}
			Consider $f_1, f_2 \in \cD(E, E')$ and $g_1, g_2 \in \cD(E',E'')$ as representatives of morphisms $\pi \to \pi'$ and $\pi' \to \pi''$.
			If $f_1 \dileq{\pi} f_2$, then we have the requisite 
			\begin{equation}
				g_1 \circ f_1 \dileq{\pi} g_1 \circ f_2
			\end{equation}
			by composing \Cref{eq:dilation_morphism_equivalence} with $g_1$.
			
			On the other hand, given $g_1 \dileq{\pi'} g_2$, we have the requisite 
			\begin{equation}
				\tikzfig{dilation_morphism_proof}
			\end{equation}
			for every dilation $\rho$ of $\pi$ because the morphism 
			\begin{equation}
				\tikzfig{dilation_morphism_proof2}
			\end{equation}
			is itself a dilation of $\pi'$. 
			Thus, the required $g_1 \circ f_1 \dileq{\pi} g_2 \circ f_1$ follows and composition in $\dilations(p)$ is well-defined.
		\end{proof}
		
		\begin{example}[Copying leaked information is irrelevant]
			\label{ex:env_copy}
			If $\pi \colon A \to X \otimes E$ is a dilation of any morphism $p \colon A \to X$ in a causal Markov category $\cC$, then the dilation given by
			\begin{equation}
				\tikzfig{copy_dilation}
			\end{equation}
			is isomorphic to $\pi$ in $\dilations(p)$.
			To see this, note that the copy morphism itself defines a morphism of dilations $\pi \to \pi'$, while marginalizing either environment output of $\pi'$ defines a morphism $\pi' \to \pi$.
			The composite $\pi \to \pi' \to \pi$ is trivially equal to $\id_\pi$ already at the level of representatives in $\cC$.
			In the other direction, we use \Cref{prop:markov_dilation} to reduce the claim to proving $\pi'$-almost sure equality.
			This amounts to the equation
			\begin{equation}
				\tikzfig{dileq_trivial}
			\end{equation}
			which is a straightforward consequence of the commutative comonoid equations on $E$.
			Note that for this direction, the composite $\pi' \to \pi \to \pi'$ is generally not equal to $\id_{\pi'}$ on the level of representatives. In fact, in the version of $\dilations(p)$ where morphisms are \emph{not} identified up to equivalence, the two dilations are generally not isomorphic, since an isomorphism therein in particular constitutes an isomorphism between $X$ and $X \otimes X$, which typically does not exist.
		\end{example}

	\subsection{Initial dilations}\label{sec:initial_dilations}
		
		We introduce and study a variant of the concept of \emph{universal dilations} from~\cite[Definition~2.4.1]{houghtonlarsen2021dilations}.
		\begin{definition}\label{def:initial_dilation}
			An \newterm{initial dilation} of a morphism $p$ is an initial object in $\dilations(p)$.
		\end{definition}
		Explicitly, a dilation $\pi$ of $p \colon A \to X$ is initial if for every dilation $\pi'$ of $p$ there is a morphism $f$ in $\cD$ such that 
		\begin{equation}\label{eq:dilation_morphism2}
			\tikzfig{dilation_morphism}
		\end{equation}
		holds and moreover such that this $f$ is unique up to $\pi$-dilational equality.
		
		Our notion of initial dilation is intermediate between the notions of universal dilation and of complete dilation from \cite{houghtonlarsen2021dilations}.\footnote{Strictly speaking, this refers to \emph{one-sided} complete and \emph{one-sided} universal dilations, since in contrast to~\cite{houghtonlarsen2021dilations} we only consider one-sided dilations throughout (see \cref{fn:no_two_sided}).}
		That is, every universal dilation is initial and every initial dilation is complete. 
		Indeed, a universal dilation is one for which the $f$ in \cref{eq:dilation_morphism2} is unique as a morphism in $\cD$ rather than unique up to dilational equality as in the case of initial dilations.
		On the other hand, a complete dilation is one for which $f$ is merely required to exist with no uniqueness requirement.
		
		\begin{example}
			The category of finite-dimensional Hilbert spaces and quantum channels has initial dilations in the form of Stinespring dilations. 
			It is shown in \cite[Theorem 2.4.11]{houghtonlarsen2021dilations} that every \emph{minimal} Stinespring dilation is universal and thus initial{\,\textemdash\,}but in fact \emph{every} Stinespring dilation is initial.
			The argument goes as follows.
			First of all, the relevant morphism $f$ in \cref{eq:dilation_morphism2} exists because Stinespring dilations are complete \cite[Lemma 2.3.8]{houghtonlarsen2021dilations}.
			Moreover, it is unique up to dilational equality because the relation $g \dileq{\pi} h$ for any Stinespring dilation $\pi$ reduces to equality of the composites: $(\id \otimes g) \circ \pi = (\id \otimes h) \circ \pi$.
			This fact follows because every dilation of such a $\pi$ is given by a tensor product of $\pi$ with a state of the environment \cite[Corollary 2.3.23]{houghtonlarsen2021dilations}.
		\end{example}
		
		\begin{example}
			\label{pointed_sets_ex}
			Let $1 / \Setcat$ be the category of pointed sets, and consider $(1 / \Setcat)^\op$ with cartesian product as the symmetric monoidal structure.
			This is a semicartesian category.
			In the remainder of this example, we depict the arrow directions in $1 / \Setcat$ in order to avoid confusion, writing a morphism $f$ from $X$ to $Y$ as $f \colon X \leftarrow Y$.
			We denote basepoints by the symbol $\ast$.
			A dilation of a morphism $p \colon A \leftarrow X$ is a morphism $\pi \colon A \leftarrow X \times E$ such that $\pi(x,\ast) = p(x)$ for all $x \in X$.
		
			Writing $X^X$ for the hom-set $(1 / \Setcat)(X, X)$ with basepoint the identity map, function evaluation defines a morphism
			\begin{equation}
				\label{eq:pointed_sets_ev}
				\mathrm{ev}_X \colon X \longleftarrow X \times X^X,
			\end{equation}
			which is a dilation of $\id_X$.
			This dilation is initial, since every dilation $\pi \colon X \leftarrow X \times E$ of $\id_X$ arises from $\mathrm{ev}_X$ by composing with a unique morphism $X^X \leftarrow E$.
		\end{example}
		
		\begin{example}
			\label{copy_is_complete}
			In a Markov category $\cC$, a copy morphism $\cop_X$ is a dilation of $\id_X$.
			If $\cC$ is positive, then $\cop_X$ is an initial dilation of $\id_X$.
			Indeed, any dilation of $\id_X$ can be written as
			\begin{equation}\label{eq:dilation_id}
				\tikzfig{dilation_id}
			\end{equation}
			which follows from the positivity axiom in the form of deterministic marginal independence (\cref{def:dmp}), instantiated with $\pi = \iota$. 
			The uniqueness clause is automatic.
		\end{example}

		To see that this can fail without positivity, we return to the Markov category of quasi-Borel spaces from \Cref{sec:qbs}.

		\begin{proposition}\label{prop:qbs_no_dilation}
			In $\qbstoch$, the copy map $\cop_{2^\R}$ is not an initial dilation of the identity.
		\end{proposition}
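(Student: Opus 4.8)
The plan is to argue by contradiction, exploiting the failure of deterministic marginal independence in $\qbstoch$. Recall from \Cref{copy_is_complete} that positivity is exactly what makes $\cop_X$ an initial dilation of $\id_X$: any dilation $\iota$ of $\id_X$ can be rewritten as a copy followed by a morphism on the environment precisely because of DMI. Since DMI fails in $\qbstoch$ (\Cref{proposition:qbs_no_dmp}), I would produce an explicit dilation of $\id_{2^\R}$ that cannot arise in this way. So I assume $\cop_{2^\R}$ is an initial dilation of $\id_{2^\R}$; in particular it is complete, meaning every dilation $\pi'$ of $\id_{2^\R}$ admits a morphism $\cop_{2^\R} \to \pi'$ in the sense of \Cref{def:initial_dilation}.

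The key construction is a dilation that \emph{adjoins a fresh random point}. Fixing an atomless $\nu$, define $\iota \colon 2^\R \to 2^\R \otimes \R$ in $\qbstoch$ which, on input $A$, samples $x \sim \nu$ and outputs the pair $(A \cup \{x\},\, x)$; formally $\iota$ first adjoins an independent $\nu$-distributed real and then applies the deterministic $\qbs$-morphism $m \colon (A,x) \mapsto (A \cup \{x\}, x)$. The first thing to verify is that $\iota$ is genuinely a dilation of $\id_{2^\R}$, i.e.\ that its $2^\R$-marginal is the identity. On input $A$ that marginal is the pushforward of $\nu$ along $x \mapsto A \cup \{x\}$, which factors as the singleton map $\{\ph\}$ followed by the $\qbs$-morphism $U_A \colon B \mapsto A \cup B$. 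Functoriality of $P$ together with the privacy equation $P(\{\ph\})(\nu) = \delta_\emptyset$ (\Cref{thm:privacy}) then gives $P(U_A)(\delta_\emptyset) = \delta_{A}$, so the marginal is $A \mapsto \delta_A = \id_{2^\R}$. Thus $\iota$ is a bona fide dilation, even though its first output coincides with the input only in distribution, not deterministically; this mismatch is exactly what smuggles in correlation that a copy-factorization cannot reproduce.

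By the assumed initiality there is then a morphism $f \colon 2^\R \to \R$ with $(\id_{2^\R} \otimes f) \circ \cop_{2^\R} = \iota$. The final step postcomposes both sides with the evaluation morphism $\mathsf{ev} \colon 2^\R \otimes \R \to 2$ of \Cref{eq:evaluation}, mirroring the proof of \Cref{proposition:qbs_no_dmp}. On the $\iota$ side, $\mathsf{ev}$ reads $\llbracket x \in A \cup \{x\} \rrbracket$, which is constantly $\mathsf{tt}$. On the copy side the first output is the deterministic copy of the input, so $\mathsf{ev}$ reads $\llbracket f(A) \in A \rrbracket$; evaluating at the input $A = \emptyset$ gives $\llbracket f(\emptyset) \in \emptyset \rrbracket = \mathsf{ff}$ with probability one, since no point lies in the empty set. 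Hence $\mathsf{ff} = \mathsf{tt}$, the desired contradiction, and $\cop_{2^\R}$ is not initial.

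I expect the only delicate point to be the marginal computation, where one must be certain that the single extra $\nu$-distributed point is undetectable in distribution; this is precisely where \Cref{thm:privacy} enters, and it enters through $P$-functoriality, so no strengthening of the privacy equation is required. Everything else — that $\iota$ is a well-defined $\qbstoch$-morphism and that the evaluation trick forces the clash between $\mathsf{tt}$ and $\mathsf{ff}$ — runs parallel to \Cref{proposition:qbs_no_dmp}. It is worth noting that only the existence half of initiality (completeness) is used here; the uniqueness-up-to-dilational-equality clause plays no role.
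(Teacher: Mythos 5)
Your proof is correct and follows essentially the same route as the paper's: you construct the very same dilation $\iota$ of $\id_{2^\R}$ (adjoin a $\nu$-random point to the input set via $\cup$ and record it), verify it is a dilation using the privacy equation (\Cref{thm:privacy}), and refute any factorization through $\cop_{2^\R}$ by postcomposing with $\mathsf{ev}$. The only difference is presentational—you spell out the marginal computation via $P$-functoriality and pin down the contradiction at the specific input $A = \emptyset$, details the paper leaves implicit.
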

		
		\begin{proof}
			In terms of the notation from \Cref{sec:qbs}, we construct a dilation $\iota \colon 2^\R \to 2^\R \otimes \R$ of the identity which cannot be obtained from the copy map. 
			To this end, we define
			\begin{equation}
				\tikzfig{joint_dilation}
			\end{equation}
			where $\cup \colon 2^\R \otimes 2^\R \to 2^\R$ takes the union of subsets. 
			That is, $\iota$ modifies its input set $A$ by adding in a random point, and records that point in the second output. 
			The map $\cup$ is a morphism of $\qbstoch$ because it can be obtained via cartesian closure and the monad unit from the disjunction map $\vee \in \qbs(2 \times 2, 2)$.
			
			The privacy equation implies that $\iota$ is indeed a dilation of the identity, because we have
			\begin{equation}
				\tikzfig{joint_dilation_pf}
			\end{equation}
			However $\iota$ cannot be obtained by some dilation morphism from the copy map, because we have
			\begin{equation}
				\tikzfig{joint_dilation_pfc}
			\end{equation}
			which can be witnessed by postcomposing with the evaluation morphism $\mathsf{ev}$ for instance.
		\end{proof}

		The copy morphism is an example of a specific type of dilation in which only the input is leaked to the environment.
		More generally, the \hypertarget{def:bloom}{\newterm{bloom}} $p_{\rm ic}$ \cite{fullwood2021informationloss} of a morphism $p \colon A \to X$ is the dilation of $p$ given by\footnote{We use the subscript ``ic'' as shorthand for ``input-copy''.}
		\begin{equation}\label{eq:bloom}
			\tikzfig{bloom}
		\end{equation}
		In the following result, we reinterpret positivity as the property that blooms of arbitrary deterministic morphisms are initial dilations.
		
		\begin{proposition}\label{prop:positivity_dilations}
			For a Markov category $\cC$, the following are equivalent:
			\begin{enumerate}[label=(\roman*)]
				\item \label{it:positivity_dilations_1} $\cC$ is positive.
				\item \label{it:positivity_dilations_2} For every deterministic $p$, its bloom $p_{\rm ic}$ is an initial dilation of $p$.
			\end{enumerate}
		\end{proposition}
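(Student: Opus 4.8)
The plan is to prove both implications by routing through the reformulation of positivity as deterministic marginal independence (DMI) from \Cref{thm:eqpos}, and to unpack the bloom as $p_{\rm ic} = (p \otimes \id_A) \circ \cop_A$, so that for any $f \colon A \to E'$ one has $(\id_X \otimes f) \circ p_{\rm ic} = (p \otimes f) \circ \cop_A$. The key observation I would isolate first is a marginalization identity: discarding the $X$-output of $(\id_X \otimes f) \circ p_{\rm ic}$ returns $f$ on the nose, since $\discard_X \circ p = \discard_A$ (by terminality of $I$) and the comonoid counit law gives $(\discard_A \otimes \id_A) \circ \cop_A = \id_A$. This single identity handles the uniqueness bookkeeping in both directions.

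For $\ref{it:positivity_dilations_1} \Rightarrow \ref{it:positivity_dilations_2}$, let $p \colon A \to X$ be deterministic and let $\pi' \colon A \to X \otimes E'$ be an arbitrary dilation of $p$. By DMI (\Cref{thm:eqpos}), $\pi'$ displays conditional independence of $X$ and $E'$ given $A$, i.e.\ $\pi' = (p \otimes \pi'_{E'}) \circ \cop_A$. Reading this as $\pi' = (\id_X \otimes \pi'_{E'}) \circ p_{\rm ic}$ exhibits $f \coloneqq \pi'_{E'}$ as a morphism of dilations $p_{\rm ic} \to \pi'$, which is the existence clause of initiality. For uniqueness, any $f$ satisfying $(\id_X \otimes f) \circ p_{\rm ic} = \pi'$ must, by the marginalization identity above, equal $\pi'_{E'}$; hence the factorizing morphism is literally unique, a fortiori unique up to $p_{\rm ic}$-dilational equality. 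Thus $p_{\rm ic}$ is initial (in fact universal in the sense of \Cref{def:initial_dilation}).

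For $\ref{it:positivity_dilations_2} \Rightarrow \ref{it:positivity_dilations_1}$, I would verify DMI directly and invoke \Cref{thm:eqpos}. Given a deterministic $p \colon A \to X$ and any dilation $\pi \colon A \to X \otimes E$, initiality of $p_{\rm ic}$ supplies an $f \colon A \to E$ with $\pi = (\id_X \otimes f) \circ p_{\rm ic} = (p \otimes f) \circ \cop_A$. Applying the marginalization identity to this expression identifies $f$ with the environment marginal $\pi_E$, so that $\pi = (p \otimes \pi_E) \circ \cop_A$, which is precisely the conditional independence asserted by DMI. Note that only the existence half of initiality (completeness of the bloom) is used here.

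I do not expect a genuine obstacle: once the bloom is written out, both directions reduce to DMI plus the single counit computation, and uniqueness turns out to be automatic rather than delicate. The one point to get right is the direction of the marginal{\,\textemdash\,}discarding $X$ rather than $E${\,\textemdash\,}together with the fact that $\discard_X \circ p = \discard_A$ holds for \emph{any} $p$, so that determinism of $p$ is needed only to invoke DMI and not for the bookkeeping.
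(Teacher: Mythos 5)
Your proof is correct and takes essentially the same approach as the paper: both directions are routed through deterministic marginal independence via \cref{thm:eqpos}, with existence of the factorizing morphism coming from the DMI factorization and uniqueness being automatic because the environment marginal of the bloom is $\id_A$ (your ``marginalization identity'' is exactly this observation spelled out). The only difference is presentational{\,\textemdash\,}you make explicit the identification $f = \pi_E$ in the converse direction, which the paper leaves implicit.
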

		
		\begin{proof} \hfill
			\begin{itemize}
				\item[$\ref{it:positivity_dilations_1}\Rightarrow\ref{it:positivity_dilations_2}$:]
					Consider a deterministic morphism $p \colon A \to X$ and a dilation $\pi \colon A \to X \otimes E$ thereof.
					By \Cref{thm:eqpos}, we can apply deterministic marginal independence to $\pi$, which by \Cref{eq:cond_ind} gives
					\begin{equation}\label{eq:cond_ind_2}
						\tikzfig{cond_ind_2}
					\end{equation}
					This makes the bloom $p_{\mathrm{ic}}$ into an initial dilation of $p$, since the uniqueness is automatic by the fact that the $A$-marginal of $p_{\mathrm{ic}}$ is $\id_A$.
		
				\item[$\ref{it:positivity_dilations_2}\Rightarrow\ref{it:positivity_dilations_1}$:]
					We show that $\cC$ satisfies deterministic marginal independence, which is enough by \Cref{thm:eqpos}.
		
					So once again let $\pi$ be an arbitrary dilation of a deterministic morphism $p$.
					Then, by assumption, $\pi$ factors through the bloom of $p$.
					That is, there exists an $f \colon A \to E$ satisfying
					\begin{equation}\label{eq:dilation_factorization}
						\tikzfig{dilation_factorization}
					\end{equation}
					This already constitutes the relevant factorization as in \Cref{eq:cond_ind}.
					\qedhere
			\end{itemize}
		\end{proof}

		While for the existence of initial dilations for \emph{deterministic} morphisms it suffices to assume positivity, we can give a generic argument for the existence of \emph{all} initial dilations under the stronger assumption that the Markov category in question has conditionals.
		The following result and proof adapt the third author's argument for the existence of universal dilations in $\finstoch$ \cite[Theorem 2.4.6]{houghtonlarsen2021dilations}.
		
		\begin{proposition}\label{prop:conditionals_dilations}
			Let $\cC$ be a Markov category with conditionals.
			Then every morphism in $\cC$ has an initial dilation.
		\end{proposition}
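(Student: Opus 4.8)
The plan is to write down one explicit dilation of a given morphism $p\colon A\to X$ and show that it is initial in $\dilations(p)$. The candidate is the dilation that leaks a copy of both the input and the output,
\[
  \pi_0 \;=\; (\cop_X\otimes\id_A)\circ(p\otimes\id_A)\circ\cop_A\colon A\longrightarrow X\otimes(X\otimes A),
\]
where the first $X$ is retained as the visible output and $E_0:=X\otimes A$ is treated as the environment. First I would verify that $\pi_0$ is a dilation of $p$: discarding $E_0$ collapses both copy maps to identities and recovers $p$. (For states this amounts to leaking a copy of the outcome; for parametrized $p$ one must additionally leak the input, as otherwise the factorizing map constructed below would be forced to be independent of the parameter.)

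For the existence of factorizations I would observe that the required morphism of dilations is exactly a conditional. Given any dilation $\pi\colon A\to X\otimes E$ of $p$, its $X$-marginal is $p$, so by the conditional factorization (\Cref{def:conditionals}) a conditional $c:=\pi_{|X}\colon X\otimes A\to E$ of $\pi$ given $X$ satisfies
\[
  \pi \;=\; (\id_X\otimes c)\circ(\cop_X\otimes\id_A)\circ(p\otimes\id_A)\circ\cop_A \;=\; (\id_X\otimes c)\circ\pi_0 .
\]
Thus $c$ represents a morphism $\pi_0\to\pi$ in $\dilations(p)$ (cf.\ \Cref{eq:dilation_morphism2}), and such a $c$ exists precisely because $\cC$ has conditionals.

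The main obstacle is the uniqueness clause. Suppose $c,c'\colon X\otimes A\to E$ both satisfy $(\id_X\otimes c)\circ\pi_0=\pi=(\id_X\otimes c')\circ\pi_0$; then both are conditionals of $\pi$ given $X$, so by the almost-sure uniqueness of conditionals~\cite{fritz2019synthetic} we have $c\ase{\mu}c'$, where $\mu:=(p\otimes\id_A)\circ\cop_A$ is the $E_0$-marginal of $\pi_0$. Since a Markov category with conditionals is causal~\cite[Proposition~11.34]{fritz2019synthetic}, almost-sure equality and dilational equality coincide there (\Cref{prop:markov_dilation}), whence $c\dileq{\mu}c'$.

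It then remains to upgrade $c\dileq{\mu}c'$ to the required $c\dileq{\pi_0}c'$, which is the delicate bookkeeping step. The key point is that $\pi_0$ is itself a dilation of $\mu$, obtained by marginalizing the visible $X$-output; consequently, any dilation of $\pi_0$ becomes, after regrouping the visible $X$-wire into the added environment, a dilation of $\mu$. Hence every test diagram witnessing $\dileq{\pi_0}$ arises from one witnessing $\dileq{\mu}$, so $c\dileq{\mu}c'$ indeed forces $c\dileq{\pi_0}c'$. Combining the three steps shows that $\pi_0$ is an initial object of $\dilations(p)$, which is what we want.
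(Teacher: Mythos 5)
Your proof is correct and takes essentially the same route as the paper: your $\pi_0$ is exactly the paper's input-output-copying dilation $p_{\mathrm{ioc}}$ of \cref{eq:fioc_defn}, existence of factorizations comes from the observation that they are precisely conditionals $\pi_{|X}$, and uniqueness comes from the almost-sure uniqueness of conditionals combined with causality (via \cite[Proposition 11.34]{fritz2019synthetic} and \Cref{prop:markov_dilation}). Your final regrouping step, upgrading $c \dileq{\mu} c'$ to the required $\pi_0$-dilational equality by noting that every dilation of $\pi_0$ is (after moving the visible $X$-wire into the environment) a dilation of $\mu$, is a correct piece of bookkeeping that the paper's proof leaves implicit.
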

		
		\begin{proof}
			Let $p \colon A \to X$ be any morphism. 
			We claim that an initial dilation of $p$ is given by the dilation which simply copies both its input and output,
			\begin{equation}\label{eq:fioc_defn}
				\tikzfig{fioc_defn}
			\end{equation}
			where the environment is given by $X \otimes A$.
			To see this, let $\pi \colon A \to X \otimes E$ be any other dilation of $p$ and let $\pi_{|X}$ be any conditional of $\pi$ with respect to $X$.
			Then $\pi$ factorizes through $p_{\mathrm{ioc}}$:
			\begin{equation}\label{eq:complete_dilation}
				\tikzfig{complete_dilation}
			\end{equation}
			This equation follows directly from the definition of conditionals and the fact that $\pi$ is a dilation of $p$.
		
			On the uniqueness, suppose that we have some other $h \colon X \otimes A \to E$ satisfying \eqref{eq:complete_dilation} in place of $\pi_{|X}$.
			Then $h$ is itself a conditional of $\pi$ with respect to $X$ and, by the \as{} uniqueness of conditionals, $h$ is $\as{p_{\rm ioc}}$ equal to $\pi_{|X}$.
			Since every Markov category with conditionals is causal \cite[Proposition 11.34]{fritz2019synthetic}, we can use \cref{prop:markov_dilation} to deduce that $h$ must also be $p_{\rm ioc}$-dilationally equal to $\pi_{|X}$, thus showing the requisite uniqueness property.
		\end{proof}
	
	\subsection{A dilational characterization of Markov categories}\label{sec:non-creative}
	
		Here, we give an abstract characterization of positive Markov categories as semicartesian categories subject to additional principles. 
		More precisely, these principles serve to single out the copy morphisms uniquely and ensure their defining properties.
		
		We first introduce the concept of \emph{non-creative} morphisms.
		It mimics the idea behind deterministic morphisms in a positive Markov category, but its definition applies in the semicartesian case since it does not reference the copy morphisms at all.
		To motivate this, let us anticipate \cref{lem:PosDetNC} below: this shows that a morphism $p$ in a positive Markov category is deterministic if and only if every dilation $\pi$ of $p$ factors as in \cref{eq:dilation_factorization}.
		While this factorization of $\pi$ makes explicit reference to $\cop_A$, we also know from \cref{copy_is_complete} that the copy morphism is an initial dilation of $\id_A$.
		Thus, the right-hand side of \cref{eq:dilation_factorization} can be expressed as a sequential composition of an arbitrary dilation 
		\begin{equation}\label{eq:id_dil}
			\tikzfig{id_dil}
		\end{equation}
		of $\id_A$ with $p \otimes \id_E$.
		Here is now the general definition.
		
		\begin{definition}
			\label{def:noncreative}
			A morphism $p \colon A \to X$ in a semicartesian category is called \newterm{non-creative} if every dilation of $p$ is of the form
			\begin{equation}\label{eq:noncreative}
				\tikzfig{noncreative}
			\end{equation}
			for some dilation $\iota \colon A \to A \otimes E$ of $\id_A$.
		\end{definition}
		
		The term ``non-creative'' indicates that any information leaked from process $p$ to the environment can be viewed as having leaked already from the input of $p$.
		For a semicartesian category $\cD$, we write $\cD_\nc$ for the class of non-creative morphisms in $\cD$.
		
		\begin{lemma}[Determinism, non-creativity and positivity]
			\label{lem:PosDetNC}
			Let $\cC$ be a Markov category in which the copy morphisms are initial dilations of the identities. 
			Then every non-creative morphism is deterministic and we have $\cC_\nc = \cC_\det$ if and only if $\cC$ is positive. 
		\end{lemma}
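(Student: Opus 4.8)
The plan is to decouple the biconditional by first establishing the unconditional inclusion $\cC_\nc \subseteq \cC_\det$, which already uses the standing hypothesis that the copy morphisms are initial dilations of the identities. Once this inclusion is available, the remaining equivalence reduces to showing that the reverse inclusion $\cC_\det \subseteq \cC_\nc$ holds if and only if $\cC$ is positive, so the whole lemma follows by combining the two.

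For $\cC_\nc \subseteq \cC_\det$, let $p \colon A \to X$ be non-creative. I would apply \Cref{def:noncreative} to the particular dilation $\cop_X \circ p \colon A \to X \otimes X$ obtained by copying the output, whose environment is $X$. This supplies a dilation $\iota \colon A \to A \otimes X$ of $\id_A$ with $\cop_X \circ p = (p \otimes \id_X) \circ \iota$. Because $\cop_A$ is an initial dilation of $\id_A$ by hypothesis, $\iota$ factors through it, say $\iota = (\id_A \otimes f) \circ \cop_A$ for some $f \colon A \to X$, whence $\cop_X \circ p = (p \otimes f) \circ \cop_A$. Discarding the first output and invoking the comonoid counit law then forces $f = p$; feeding this back yields $\cop_X \circ p = (p \otimes p) \circ \cop_A$, which is exactly determinism (\Cref{def:det}).

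For the equivalence, one direction is short: if $\cC$ is positive then by \Cref{prop:positivity_dilations} the bloom $p_{\rm ic}$ of any deterministic $p$ is an initial dilation, so every dilation $\pi \colon A \to X \otimes E$ of $p$ factors as $\pi = (\id_X \otimes f) \circ p_{\rm ic} = (p \otimes \id_E) \circ \bigl((\id_A \otimes f) \circ \cop_A\bigr)$; the bracketed morphism is a dilation of $\id_A$, so $p$ is non-creative and $\cC_\det \subseteq \cC_\nc$. Conversely, assuming $\cC_\det \subseteq \cC_\nc$, I would show the bloom of every deterministic $p$ is initial and then conclude positivity via \Cref{prop:positivity_dilations}. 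Existence of a factorization of an arbitrary dilation $\pi$ of $p$ through $p_{\rm ic}$ comes from combining non-creativity of $p$ (which rewrites $\pi$ via some dilation $\iota$ of $\id_A$) with the hypothesis that $\cop_A$ is initial (which rewrites $\iota$ through $\cop_A$), exactly reversing the computation above; uniqueness is automatic because composing the factorization with $\discard_X$ pins the mediating morphism down to the $E$-marginal $\pi_E$.

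All the individual computations are routine comonoid manipulations, so I expect the main obstacle to be conceptual bookkeeping rather than any genuine difficulty: one must keep straight the two distinct factorizations that appear — the non-creativity factorization through a dilation of $\id_A$, and the bloom factorization through $p_{\rm ic}$ — and recognize the initiality of $\cop_A$ (\Cref{copy_is_complete} guarantees it holds under positivity, and it is assumed outright here) as the bridge that converts each factorization into the other. Verifying that the mediating morphisms match up and that the marginal computations identify them on the nose is then straightforward.
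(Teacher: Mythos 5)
Your proposal is correct and follows essentially the same route as the paper's own proof: the inclusion $\cC_\nc \subseteq \cC_\det$ via applying non-creativity to $\cop_X \circ p$ and then initiality of $\cop_A$, the forward direction of the equivalence via \Cref{prop:positivity_dilations} and the bloom, and the converse by combining $\cC_\det \subseteq \cC_\nc$ with initiality of $\cop_A$ to recover the bloom factorization (with uniqueness pinned down by the $E$-marginal). No gaps; the details match the paper's argument step for step.
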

		
		\begin{proof}
			Let $p \colon A \to X$ be a non-creative morphism in $\cC$. 
			The morphism  $\textup{copy}_X \circ p$ is a dilation of $p$, and therefore satisfies
			\begin{equation}\label{eq:outputcopy_inputdilation}
				\tikzfig{outputcopy_inputdilation}
			\end{equation}
			for some dilation $\iota \colon A \to A \times X$ of $\id_A$ by \Cref{def:noncreative}. 
			Since $\textup{copy}_A$ is an initial dilation of $\id_A$ by assumption, we must have $\iota = (\id_A \otimes f) \circ \textup{copy}_A$ for some $f \colon A \to X$. 
			Consequently, 
			\begin{equation}\label{eq:pdnc1}
				\tikzfig{pdnc1}
			\end{equation}
			holds and in particular we must have $f = p$ by marginalizing the left output.
			The resulting equation precisely asserts that $p$ is deterministic. 
			Hence the claim $\cC_\nc \subseteq \cC_\det$ follows.
		
			It remains to prove that the reverse inclusion $\cC_\det \subseteq \cC_\nc$ is equivalent to positivity of $\cC$.
			This can be achieved either by showing that $\cC_\det \subseteq \cC_\nc$ is a just a different way to express deterministic marginal independence (\cref{def:dmp}) or by relating it to property \ref{it:positivity_dilations_2} of \cref{prop:positivity_dilations}.
			We take the latter approach.
			To this end, consider a deterministic morphism $p \colon A \to X$ in $\cC$ and an arbitrary dilation $\pi \colon A \to X \otimes E$ thereof.
			
			Suppose first that $\cC$ is positive.
			By \Cref{prop:positivity_dilations}, the bloom of $p$ is an initial dilation of $p$, so that we have
			\begin{equation}\label{eq:pdnc2}
				\tikzfig{pdnc2}
			\end{equation}
			for some $f \colon A \to E$. 
			The fact that the morphism in the dashed rectangle is a dilation of $\id_A$ shows that $p$ is non-creative, so that $\cC_\det \subseteq \cC_\nc$ follows.
		
			Conversely, suppose that $\cC_\det \subseteq \cC_\nc$ holds.
			Together with the assumption that $\cop_A$ is an initial dilation of $\id_A$, this implies the factorization of $\pi$ as in \cref{eq:pdnc2} for some $f$, which is necessarily the $E$-marginal of $\pi$ and therefore unique.
			By \Cref{prop:positivity_dilations} again, we obtain that $\cC$ is a positive Markov category.
		\end{proof}
		
		Positivity implies that copy morphisms are initial dilations of the identities (\Cref{copy_is_complete}). 
		We therefore obtain another immediate characterization of positivity.

		\begin{corollary}
			\label{cor:pos_nc}
			A Markov category $\cC$ is positive if and only if the copy morphisms are initial dilations of the identities and $\cC_\det = \cC_\nc$. 
		\end{corollary}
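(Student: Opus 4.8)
The plan is to deduce this characterization directly by combining \Cref{copy_is_complete} with \Cref{lem:PosDetNC}, since both halves of the equivalence have in effect already been established there. The key observation is that \Cref{lem:PosDetNC} carries as a standing hypothesis precisely the condition that the copy morphisms are initial dilations of the identities, and under that hypothesis it characterizes positivity as the equality $\cC_\det = \cC_\nc$. So the entire task reduces to arranging these two ingredients correctly in each direction of the biconditional.

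For the forward implication, I would assume that $\cC$ is positive. Then \Cref{copy_is_complete} immediately gives that each copy morphism $\cop_X$ is an initial dilation of $\id_X$, which is the first of the two asserted conditions. This in turn means that the hypothesis of \Cref{lem:PosDetNC} is satisfied, so that lemma applies; its equivalence ``$\cC_\det = \cC_\nc$ if and only if $\cC$ is positive'' then yields $\cC_\det = \cC_\nc$ from the standing positivity assumption. Hence both conditions hold simultaneously.

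For the converse, I would assume that the copy morphisms are initial dilations of the identities and that $\cC_\det = \cC_\nc$. The first assumption is exactly the hypothesis of \Cref{lem:PosDetNC}, so I may invoke the lemma outright; its stated equivalence then converts the assumed equality $\cC_\det = \cC_\nc$ into positivity of $\cC$, as desired.

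Since everything is a direct application of the two cited results, I expect no genuine obstacle here. The only point requiring care is ensuring that the hypothesis of \Cref{lem:PosDetNC} (the copy maps being initial dilations) is actually available before the lemma is invoked in each direction: in the forward direction it is supplied for free by \Cref{copy_is_complete}, while in the converse it is assumed explicitly as part of the hypothesis.
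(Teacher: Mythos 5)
Your proposal is correct and follows exactly the paper's route: the paper derives \Cref{cor:pos_nc} immediately from \Cref{copy_is_complete} (positivity gives the initial-dilation property of the copy maps) combined with the equivalence in \Cref{lem:PosDetNC}, which is precisely your two-directional argument. Nothing is missing.
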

		
		Our next goal is to characterize certain classes of Markov categories as semicartesian categories subject to additional axioms. Specifically, these axioms shall serve as a way to reconstruct the copy morphisms and to ensure their required properties. In this way, the copy morphisms emerge as a consequence of dilational axioms rather than being imposed as additional mathematical data on top of a semicartesian category.
		\begin{definition}\label{def:broadcasting}
			\begin{enumerate}
				\item A morphism $b_X \colon X \to X \otimes X$ is \newterm{broadcasting} if both of its marginals are the identity:
					\begin{equation}\label{eq:double_dil}
						\tikzfig{double_dil}
					\end{equation}
				\item An object $X$ \newterm{admits broadcasting} if there is a broadcasting morphism $b_X \colon X \to X \otimes X$.
			\end{enumerate}
		\end{definition}
		The terminology is inspired by the analogous concept of \emph{broadcasting} in quantum information theory \cite{barnum1996broadcast}.
		For example, the terminal object $I$ trivially admits broadcasting; and if $X$ and $Y$ admit broadcasting via $b_X$ and $b_Y$, then so does $X \otimes Y$ via the product $b_X \otimes b_Y$ with the middle outputs swapped.
		Furthermore, every copy morphism in a Markov category is broadcasting by the counitality axiom.
		In some Markov categories, however, there are other broadcasting morphisms as well.
		For instance, in $\finstoch_\pm$ (\cref{pm_fails_dmp}) for ${X = \{0,1\}}$, the kernel given by
		\begin{equation}
			b_X( x_1, x_2 | x_0) \coloneqq (-1)^{x_1+x_2} + \cop_X(x_1, x_2 | x_0)
		\end{equation}
		is broadcasting, but not equal to the copy map.

		On the positive side, in positive Markov categories there are no broadcasting morphisms other than copy,\footnotemark{} since then for any broadcasting morphism $b_X$, we have
		\footnotetext{This is a special case of a general result on broadcasting morphisms in quantum Markov categories \cite[Theorem 4.17]{parzygnat2020inverses} based on ``quantum positivity'' (see also \cref{rem:positive} \ref{item:quantumpositive}).}%
		\begin{equation}
			\tikzfig{positive_broadcasting}
		\end{equation}
		where the second step is by positivity and the third by the broadcasting property.
		This shows that $b_X$ is the copy morphism upon marginalizing the third output.
			
	\begin{proposition}\label{prop:nocloning}
		Let $\cD$ be a semicartesian category. For any object $X$, the following are equivalent:
		\begin{enumerate}
			\item \label{cloning} $X$ admits broadcasting.
			\item \label{noncreative_deletion} The discard morphism $\discard_X$ is non-creative.
		\end{enumerate}
	\end{proposition}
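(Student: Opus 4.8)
The plan is to exploit a degeneracy specific to the discard morphism: because the monoidal unit $I$ is terminal, the notion of a dilation of $\discard_X$ trivializes. Indeed, a dilation of $\discard_X \colon X \to I$ is a morphism $\pi \colon X \to I \otimes E \cong E$ satisfying $(\id_I \otimes \discard_E) \circ \pi = \discard_X$; but $(\id_I \otimes \discard_E) \circ \pi = \discard_E \circ \pi$ is forced to equal $\discard_X$ since both are the unique morphism $X \to I$. Hence \emph{every} morphism $\pi \colon X \to E$ out of $X$ is automatically a dilation of $\discard_X$, with environment $E$. I would record this observation first, as it is the engine of both implications.

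For $\ref{noncreative_deletion} \Rightarrow \ref{cloning}$, I would feed the identity into non-creativity. Taking $E \coloneqq X$ and $\pi \coloneqq \id_X$, which is a dilation of $\discard_X$ by the observation above, non-creativity supplies a dilation $\iota \colon X \to X \otimes X$ of $\id_X$ with $(\discard_X \otimes \id_X) \circ \iota = \id_X$. The two conditions on $\iota$ are exactly the two marginal conditions for broadcasting: the statement that $\iota$ dilates $\id_X$ reads $(\id_X \otimes \discard_X) \circ \iota = \id_X$ (discard the environment leg), while the factorization equation reads $(\discard_X \otimes \id_X) \circ \iota = \id_X$ (discard the $\discard_X$-output leg). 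Thus $\iota$ is a broadcasting morphism and $X$ admits broadcasting.

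For $\ref{cloning} \Rightarrow \ref{noncreative_deletion}$, I would build the required dilation of $\id_X$ out of a broadcasting morphism $b_X \colon X \to X \otimes X$. Given any dilation $\pi \colon X \to E$ of $\discard_X$, I set $\iota \coloneqq (\id_X \otimes \pi) \circ b_X \colon X \to X \otimes E$. Discarding the $E$-leg gives $(\id_X \otimes \discard_E) \circ \iota = (\id_X \otimes \discard_X) \circ b_X = \id_X$, using $\discard_E \circ \pi = \discard_X$ together with the first marginal condition on $b_X$, so $\iota$ is indeed a dilation of $\id_X$; discarding the $X$-leg gives $(\discard_X \otimes \id_E) \circ \iota = \pi \circ (\discard_X \otimes \id_X) \circ b_X = \pi$, using the second marginal condition on $b_X$. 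This is precisely the factorization demanded by \Cref{def:noncreative}, so $\discard_X$ is non-creative. The only delicate point throughout is bookkeeping: correctly matching each of the two broadcasting marginals to the two defining conditions of the witnessing dilation, and silently absorbing the coherence isomorphisms $I \otimes E \cong E$ when rewriting $\discard_X \otimes \pi$ as $\pi \circ (\discard_X \otimes \id_X)$. I expect no substantive obstacle beyond this.
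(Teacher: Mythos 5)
Your proposal is correct and follows essentially the same route as the paper's proof: for (ii)$\Rightarrow$(i) both plug the identity $\id_X$, viewed as a dilation of $\discard_X$ with environment $X$, into non-creativity and read off the two broadcasting marginals, and for (i)$\Rightarrow$(ii) both factor an arbitrary dilation $\pi$ of $\discard_X$ as $(\discard_X \otimes \id_E) \circ (\id_X \otimes \pi) \circ b_X$. Your explicit preliminary observation that terminality of $I$ makes \emph{every} morphism out of $X$ a dilation of $\discard_X$ is only implicit in the paper, but it is exactly the mechanism both arguments rely on.
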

	We therefore obtain a \emph{no-broadcasting theorem} \cite{barnum2007nobroadcasting}: $X$ does not admit broadcasting if and only if $\discard_X$ is not non-creative.
	\begin{proof}
		\hfill
		\begin{itemize}
			\item [$\ref{cloning}\Rightarrow\ref{noncreative_deletion}$:] 
				Given a broadcasting morphism $b_X$, by equations \eqref{eq:double_dil} we have 
				\begin{equation}
					\tikzfig{noncreative_2}
				\end{equation}
				for every $f \colon X \to Y$.
				In particular, an arbitrary dilation of $\discard_X$ is of the form of diagram~\eqref{eq:noncreative} and thus $\discard_X$ is non-creative.

			\item[$\ref{noncreative_deletion}\Rightarrow\ref{cloning}$:]
				Consider the dilation $\id_X$ of $\discard_X$ with environment $X$. 
				Since $\discard_X$ is non-creative, this dilation must be of the form \eqref{eq:noncreative}.
				That is, there is a dilation $\iota \colon X \to X \otimes X$ of $\id_X$ whose second output is the environment and which also satisfies
				\begin{equation}
					\tikzfig{del_noncreative}
				\end{equation}
				In particular, $\iota$ is broadcasting.
				 \qedhere
		\end{itemize}
	\end{proof}

		In the remainder of this section, we characterize positive Markov categories in purely semicartesian terms.
		
		\begin{theorem}
			\label{thm:acp}
			Let $\cD$ be a semicartesian category. 
			Then the following are equivalent:
			\begin{enumerate}
				\item\label{is_markov_positive} $\cD$ can be made into a Markov category in which the copy morphisms are initial dilations of the identities.
				\item\label{semicartesian_stuff} For every object $X$, the identity $\id_X$ admits an initial dilation $\iota \colon X \to X \otimes E$ such that the marginal
				\begin{equation}
					\tikzfig{marg_dil}
				\end{equation}
				is non-creative.
			\end{enumerate}
			If these conditions hold, then the Markov category structure in \ref{is_markov_positive} is unique.
		\end{theorem}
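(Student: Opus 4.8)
The plan is to prove the two implications separately and to read off uniqueness from the rigidity used in the hard direction. Two preliminary remarks organize everything. First, by \Cref{def:broadcasting} a broadcasting morphism $b\colon X\to X\otimes X$ is exactly one whose two marginals are both $\id_X$, which is precisely the counitality condition on a candidate copy map. Second, $\id_X$ is \emph{always} non-creative: for $p=\id_X$, every dilation $\pi$ of $\id_X$ is of the form \eqref{eq:noncreative} by taking the dilation of $\id_X$ appearing there to be $\pi$ itself. For the direction \ref{is_markov_positive}$\Rightarrow$\ref{semicartesian_stuff} I would then simply take $\iota\coloneqq\cop_X$. This is an initial dilation of $\id_X$ by hypothesis, and its environment marginal is $\id_X$ by counitality, which is non-creative by the remark above; hence \ref{semicartesian_stuff} holds.

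The substance is \ref{semicartesian_stuff}$\Rightarrow$\ref{is_markov_positive}. Fix an initial dilation $\iota\colon X\to X\otimes E$ of $\id_X$ whose environment marginal $m\coloneqq(\discard_X\otimes\id_E)\circ\iota\colon X\to E$ is non-creative. Reading $\iota$ with $E$ as its principal output and $X$ as environment exhibits $\iota$ itself as a dilation of $m$, so non-creativity of $m$ yields a factorization $\iota=(\id_X\otimes m)\circ b$ for some dilation $b\colon X\to X\otimes X$ of $\id_X$. I would then compute both marginals of $b$: one equals $\id_X$ because $b$ is the dilation of $\id_X$ supplied by non-creativity, and the other is forced to $\id_X$ because $\iota$ is a dilation of $\id_X$ and $\discard_E\circ m=\discard_X$. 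Thus $b$ is broadcasting. Initiality of $\iota$ supplies $h\colon E\to X$ with $b=(\id_X\otimes h)\circ\iota$, and I claim $m,h$ are mutually inverse in $\dilations(\id_X)$: the composite $h\circ m$ equals the environment marginal $(\discard_X\otimes\id_X)\circ b$ of the broadcasting map $b$, hence equals $\id_X$ \emph{on the nose}, while $m\circ h \dileq{\iota} \id_E$ follows from the uniqueness clause of initiality applied to the endomorphism $(\id_X\otimes mh)\circ\iota=\iota$. Therefore $b\cong\iota$ in $\dilations(\id_X)$, so $b$ is itself an \emph{initial} dilation of $\id_X$ that is moreover broadcasting.

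I would then set $\cop_X\coloneqq b$ and take $\discard_X$ to be the canonical map to the terminal unit. Counitality is the broadcasting property. Cocommutativity, coassociativity, and multiplicativity \eqref{copy_multiplicative} all follow from a single rigidity observation: if a dilation $\pi$ of $\id_X$ factors through the broadcasting initial dilation as $\pi=(\id_X\otimes f)\circ\cop_X$, then $f$ is forced to equal the environment marginal $(\discard_X\otimes\id)\circ\pi$ as an \emph{honest} equality, because the environment marginal of $\cop_X$ is $\id_X$. Each axiom is then checked by writing its two sides as dilations of $\id_X$, invoking initiality to factor both through $\cop_X$, and computing that the forced factorizing morphisms coincide: for cocommutativity one factors the swapped copy $\sigma\circ\cop_X$ and finds the factorizing map to be $\id_X$, so $\sigma\circ\cop_X=\cop_X$; for coassociativity one computes that both $(\id\otimes\cop_X)\circ\cop_X$ and $(\cop_X\otimes\id)\circ\cop_X$ have environment marginal $\cop_X$ and so agree; and multiplicativity is handled identically using that the reassociated product of $\cop_X$ and $\cop_Y$ is broadcasting on $X\otimes Y$. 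Since $\cop_X=b$ was shown to be initial, \ref{is_markov_positive} holds. Uniqueness of the structure is the same argument once more: any copy map making copy initial is a broadcasting initial dilation of $\id_X$, hence of the form $(\id_X\otimes f)\circ\cop_X$ with $f$ pinned to the environment marginal $\id_X$, so it equals $\cop_X$.

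The main obstacle I anticipate is bridging the gap between the ``up to dilational equality'' uniqueness that initiality provides and the strict equalities that the Markov axioms demand. The resolution is exactly the rigidity observation: for a \emph{broadcasting} initial dilation the factorizing morphism is not merely unique up to $\dileq{\cop_X}$ but is literally the environment marginal, which upgrades the soft universal property into the honest equalities needed for counitality, (co)associativity, cocommutativity, and multiplicativity. Establishing this, and recognizing that non-creativity of $m$ is precisely what forces the initial dilation to be broadcasting (rather than an initial dilation with an unrelated environment), is where the real content of the theorem lies.
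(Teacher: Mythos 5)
Your proposal is correct and follows essentially the same route as the paper's proof: you extract the broadcasting morphism $b$ (the paper's $c_X$) from non-creativity of the environment marginal, show it is isomorphic to $\iota$ in $\dilations(\id_X)$ via the mutually inverse pair $(h,m)$ (the paper's $g$ and $\iota_E$), and then derive counitality, cocommutativity, coassociativity, multiplicativity, and uniqueness from initiality plus the marginalization rigidity. The only cosmetic difference is that you isolate that rigidity as a single reusable observation, whereas the paper applies the same marginalization step inline in each case (phrasing multiplicativity via uniqueness of \emph{symmetric} dilations rather than directly via uniqueness of broadcasting morphisms).
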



		\begin{proof} \hfill
			\begin{itemize}
				\item[$\ref{is_markov_positive}\Rightarrow\ref{semicartesian_stuff}$:]
					If $\cD$ is a Markov category in which the copy morphisms are initial dilations of the identity, then the requirements of \ref{semicartesian_stuff} hold since the identity is non-creative.
		
				\item[$\ref{semicartesian_stuff}\Rightarrow\ref{is_markov_positive}$:]
					Using the notation of the statement, since $\iota$ is a dilation of the non-creative morphism $\iota_E$ with environment $X$, we have 
					\begin{equation}\label{eq:acp8}
						\tikzfig{acp8}
					\end{equation}
					for some $c_X\colon X \to X \otimes X$ whose right-hand marginal is $\id_X$.
					We argue that this morphism is broadcasting.
					Indeed, we just noted that the right-hand marginal is the identity, while the left-hand marginal is the identity because $\iota$ is a dilation of the identity:
					\begin{equation}
						\tikzfig{acp10}
					\end{equation}
					Moreover, since $\iota$ is an initial dilation of the identity, there exists a $g \colon E \to X$ in $\cD$ that corresponds to a morphism of type $\iota \to c_X$ in $\dilations(\id_X)$, so that we have
					\begin{equation}
						\tikzfig{acp9}
					\end{equation}
					Marginalizing the first output of this equation gives $g \circ \iota_E = \id_X$, while the initiality of $\iota$ implies ${\iota_E \circ g \dileq{\iota} \id_{E}}$.
					As a consequence, $c_X$ is isomorphic to $\iota$ in $\dilations(\id_X)$ and thus also an initial dilation of $\id_X$. 

					Next, we show that every identity morphism $\id_X$ has exactly one broadcasting morphism. 
					To this end, let ${c'_X \colon X \to X \otimes X}$ be another morphism satisfying equations \eqref{eq:double_dil}.
					Then we have
					\begin{equation}\label{eq:acp4}
						\tikzfig{acp4}
					\end{equation}
					for some $h \colon X \to X$ by initiality of $c_X$. 
					Marginalizing the left output gives $h = \id_X$, so that $c'_X$ is necessarily equal to $c_X$. 
					This demonstrates the asserted uniqueness.  
					In particular, this immediately implies that $c_X$ is symmetric, i.e.\ it satisfies
					\begin{equation} \label{eq:swap}
						\tikzfig{acp2}
					\end{equation}
					because swapping the outputs of $c_X$ obviously results in a broadcasting morphism again.
					Furthermore, $c_X$ is also the only dilation of $\id_X$ which is symmetric in this sense, as being a symmetric dilation implies being broadcasting.

					Next, let us show that the unique symmetric dilation equips $X$ with the structure of a commutative comonoid.
					Commutativity is precisely \cref{eq:swap}, and counitality corresponds to equations \eqref{eq:double_dil}.
					For coassociativity, the initiality of $c_X$ implies that there is a $d_X$ of the same type such that 
					\begin{equation}\label{eq:acp6}
						\tikzfig{acp6}
					\end{equation}
					holds, since the left-hand side of the equation is a dilation of $\id_X$ with environment $X \otimes X$.
					But then marginalizing the first output shows $d_X = c_X$ since $c_X$ is broadcasting, so that we obtain the required coassociativity equation.
		
					In order to see that $\cD$ becomes a Markov category, it is thus enough to prove the multiplicativity equation
					\begin{equation}\label{eq:acp7}
						\tikzfig{acp7}
					\end{equation}
					for all objects $X$ and $Y$.
					As we argued above, symmetric dilations of identities are unique.
					Therefore, \cref{eq:acp7} follows upon showing that its right-hand side is a dilation of $\id_{X \otimes Y}$ that is invariant under swapping the two copies of $X \otimes Y$.
					Since this is a direct consequence of the symmetry of $c_X$ and $c_Y$, we obtain the desired result.

					Furthermore, the uniqueness of the broadcasting morphisms implies that the constructed Markov category structure is the only possible one (cf.\ \cite[Remark 11.29]{fritz2019synthetic}).
					\qedhere
			\end{itemize}
		\end{proof}
		
		With this characterisation at hand, also positive Markov categories can now be characterized in semicartesian terms by adding additional requirements to item~\ref{semicartesian_stuff}.

		\begin{corollary}[Semicartesian characterization of positive Markov categories] 
			\label{cor:acp}
			Let $\cD$ be a semicartesian category. 
			Then the following are equivalent:
			\begin{enumerate}
				\item \label{is_markov_positive1} $\cD$ can be equipped with copy morphisms making it into a positive Markov category.
				\item \label{semicartesian_stuff1} For every object $X$, the identity $\id_X$ admits an initial dilation $\iota \colon X \to X \otimes E$ such that the marginal
				\begin{equation}
					\tikzfig{marg_dil}
				\end{equation}
				is non-creative. 
				Moreover, if $f \colon X \to Y$ is such that for any dilation $\pi$ of the identity morphism $\id_Y$ we have
				\begin{equation}\label{eq:det_pullback}
					\tikzfig{det_pullback}
				\end{equation}
				for some dilation $\pi'$ of the identity morphism $\id_X$, then $f$ is non-creative.
			\end{enumerate}
		\end{corollary}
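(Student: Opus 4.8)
The plan is to leverage \Cref{thm:acp} together with the characterization of positivity in \Cref{cor:pos_nc}, so that almost all of the work is already done. Indeed, the first clause of condition \ref{semicartesian_stuff1} is verbatim condition \ref{semicartesian_stuff} of \Cref{thm:acp}, which equips $\cD$ with a Markov structure $\cC$ whose copy morphisms $\cop_X$ are initial dilations of the identities. By \Cref{cor:pos_nc}, upgrading this to \emph{positivity} amounts to the single extra requirement $\cC_\det = \cC_\nc$; and by \Cref{lem:PosDetNC} the inclusion $\cC_\nc \subseteq \cC_\det$ is automatic, so only $\cC_\det \subseteq \cC_\nc$ remains. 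The whole point of the second clause of \ref{semicartesian_stuff1} is to encode precisely this inclusion in dilational language, and the technical heart of the argument is to show that, once the Markov structure is present, the ``pullback'' hypothesis of \eqref{eq:det_pullback} is equivalent to $f$ being deterministic.

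For the direction \ref{is_markov_positive1}~$\Rightarrow$~\ref{semicartesian_stuff1}, I would first note that in a positive Markov category the copy morphisms are initial dilations of the identities by \Cref{copy_is_complete}, and that the relevant marginal of $\cop_X$ is $\id_X$, which is trivially non-creative; this settles the first clause. For the second clause I would take an $f \colon X \to Y$ satisfying \eqref{eq:det_pullback} for every dilation $\pi$ of $\id_Y$ and instantiate $\pi = \cop_Y$, obtaining a dilation $\pi'$ of $\id_X$ with $\cop_Y \circ f = (f \otimes \id_Y) \circ \pi'$. Since $\cC$ is positive, deterministic marginal independence (\Cref{thm:eqpos}) applied to $\pi'$ forces $\pi' = (\id_X \otimes g) \circ \cop_X$ with $g$ its environment marginal, as in \Cref{eq:dilation_id}. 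Substituting and marginalizing the first output then yields $g = f$ by counitality, hence $\cop_Y \circ f = (f \otimes f) \circ \cop_X$; so $f$ is deterministic and therefore non-creative, using $\cC_\det = \cC_\nc$ from \Cref{lem:PosDetNC}.

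For the direction \ref{semicartesian_stuff1}~$\Rightarrow$~\ref{is_markov_positive1}, I would invoke \Cref{thm:acp} on the first clause to obtain the (unique) Markov structure $\cC$ with $\cop_X$ initial, and then aim only at $\cC_\det \subseteq \cC_\nc$. Given a deterministic $f \colon X \to Y$, the plan is to verify the hypothesis of the second clause and then read off non-creativity from its conclusion. Concretely, for any dilation $\pi$ of $\id_Y$, initiality of $\cop_Y$ supplies $\lambda \colon Y \to E$ with $\pi = (\id_Y \otimes \lambda) \circ \cop_Y$, and determinism of $f$ then gives $\pi \circ f = (\id_Y \otimes \lambda) \circ (f \otimes f) \circ \cop_X = (f \otimes \id_E) \circ \pi'$ with $\pi' \coloneqq (\id_X \otimes (\lambda \circ f)) \circ \cop_X$, which is a dilation of $\id_X$ by counitality. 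Thus $f$ satisfies \eqref{eq:det_pullback}, so it is non-creative, giving $\cC_\det \subseteq \cC_\nc$ and hence positivity by \Cref{cor:pos_nc}.

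The main obstacle is this last factorization in the direction $\Leftarrow$: the witness $\pi'$ for \eqref{eq:det_pullback} must be produced \emph{before} positivity is available, so the argument cannot route through deterministic marginal independence and must instead rely solely on the initiality of the copy morphisms guaranteed by \Cref{thm:acp}. Dually, in the direction $\Rightarrow$ the delicate point is that positivity (through \Cref{thm:eqpos} and counitality) is exactly what lets one extract the full equation $\cop_Y \circ f = (f \otimes f) \circ \cop_X$ from the single instance $\pi = \cop_Y$ of the pullback property.
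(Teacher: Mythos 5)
Your proposal is correct and follows essentially the same route as the paper's proof: both reduce to the Markov structure with initial copy morphisms via \Cref{thm:acp}, both establish that the antecedent of \cref{eq:det_pullback} is equivalent to $f$ being deterministic (instantiating $\pi = \cop_Y$ in one direction and factoring $\pi$ through $\cop_Y$ by initiality in the other), and both conclude via \Cref{lem:PosDetNC}/\Cref{cor:pos_nc}. The only cosmetic difference is that you invoke deterministic marginal independence to factor $\pi'$ through $\cop_X$ in the forward direction, where the paper appeals directly to initiality of $\cop_X$; in a positive Markov category these yield the identical factorization.
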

		\begin{proof} 
			By \Cref{thm:acp}, we can assume that $\cD$ is a Markov category in which the copy morphisms are initial dilations of the identity. 
			The problem is therefore reduced to showing that, under this assumption, positivity is equivalent to the second part of item \ref{semicartesian_stuff1}. 
				
			Consider a morphism $f$ satisfying the condition of \cref{eq:det_pullback}. 
			For $\pi=\cop_Y$, we have 
			\begin{equation}\label{eq:det_pullback_copy}
				\tikzfig{det_pullback_copy}
			\end{equation}
			where the box on the right-hand side represents an arbitrary dilation of the identity on $X$ since $\cop_X$ is its initial dilation by assumption. 
			Marginalizing the left output gives $g = f$, so that $f$ is deterministic by \cref{eq:det_pullback_copy}.
			Conversely, every deterministic morphism satisfies \cref{eq:det_pullback}, which, once again, follows from copy morphisms being initial dilations of identities. 
			Therefore, the second part of item \ref{semicartesian_stuff1} can be restated as ``every deterministic morphism is non-creative'', and this holds if and only if $\cD$ is positive by \cref{lem:PosDetNC}.
	\end{proof}
		\begin{remark}
			\label{semicar_markov_comments}
			Let us make a couple of comments on \cref{thm:acp,cor:acp}.
			\begin{enumerate}
				\item The conditions stated in items \ref{semicartesian_stuff} of both results suggests that being a Markov category of the specified sort is a mere \emph{property} of a symmetric monoidal category rather than extra structure.

					If we restrict to strict monoidal structure for simplicity, then this statement is easy to make precise in terms of the formalism of \emph{stuff, structure and property}~\cite{baez2010lectures}: the obvious forgetful functor from the category of strict Markov categories and Markov functors~\cite[Definition~10.14]{fritz2019synthetic} to the category of strict symmetric monoidal categories and strict monoidal functors is full and faithful, and therefore forgets at most property.
					While the faithfulness is trivial, the fullness amounts to the statement that every strict monoidal functor between positive Markov categories preserves the copy morphisms.
					This holds because such a functor clearly maps a copy morphism to a broadcasting morphism, which in the positive case must be a copy morphism again.
				\item \Cref{thm:acp} also holds substituting all the occurrences of ``initial dilation'' with different properties{\,\textemdash\,}namely that of ``complete dilation'' \cite[Definition 2.3.1]{houghtonlarsen2021dilations} and ``universal dilation'' \cite[Definition 2.4.1]{houghtonlarsen2021dilations}.
					A complete dilation is like an initial dilation, but for the fact that there could be multiple morphisms $f$ in $\dilations(p)$ that relate it to another dilation of $p$ via \cref{eq:dilation_morphism}, while a universal dilation is an initial dilation for which the morphism $f$ is unique as a morphism in $\cD$.
				\item \Cref{pointed_sets_ex} on pointed sets shows that there are semicartesian categories which have initial dilations, but not initial dilations satisfying item \ref{semicartesian_stuff} of \cref{thm:acp}.
					Indeed, we have
					\begin{equation}\label{eq:eval_marginal}
						\tikzfig{eval_marginal}
					\end{equation}
					i.e.\ the marginal of the initial dilation given in \eqref{eq:pointed_sets_ev} (the left-hand side) equals the morphism $X \leftarrow X^X$ that evaluates functions at the basepoint.
					Since the functions under consideration are all basepoint-preserving, this is indeed the map that sends every function in $X^X$ to the basepoint $\ast$ of $X$, i.e.\ the right-hand side of \cref{eq:eval_marginal}, where $\ast \colon I \leftarrow X^X$ is the unique morphism of this type.
					If the constant morphism from \cref{eq:eval_marginal} was non-creative, then there would have to be a factorization of the form
					\begin{equation}
						\tikzfig{pointed_sets_ev_noncreative}
					\end{equation}
					for some dilation $\iota$ of $\id_X$.
					For any $X$ with at least two elements, this equation cannot be satisfied, because after plugging in a function $f \in X^X$, its right-hand side is independent of $f$ while the left-hand side is not.
					
				\item Another important example is the category of finite-dimensional Hilbert spaces and quantum channels.
					In this category, identities only have trivial dilations (see e.g.\ \cite[Section 2.2.A]{houghtonlarsen2021dilations}), which is a strong form of the no-cloning theorem (cf.\ \cite[Proposition 7.1]{selbyleak}). 
					In particular, there is no copy morphism and so also item \ref{semicartesian_stuff} of \cref{thm:acp} cannot hold. 
					The reason why becomes clearer in light of \cref{prop:nocloning}. 
			\end{enumerate}
		\end{remark}

\newpage
\bibliographystyle{plain}
\bibliography{markov}

\end{document}